\newtheorem{theorem}{Theorem}[section]
\newtheorem*{theorem*}{Theorem}
\newtheorem*{xrem}{Remark}
\newtheorem{corollary}[theorem]{Corollary}
\newtheorem{lemma}[theorem]{Lemma}
\newtheorem{rem}[theorem]{Remark}
\newtheorem{proposition}[theorem]{Proposition}
\newtheorem{problem}{Problem}[section]
\newtheorem*{question*}{Question}
\newtheorem*{cl}{Claim}
\theoremstyle{definition}
\newtheorem{definition}[theorem]{Definition}
\newcommand{\rr}{\mathbb{R}}
\newcommand{\nn}{\mathbb{N}}
\newcommand{\ee}{\varepsilon}
\newcommand{\supp}{\mathrm{supp}}
\newcommand{\tim}{\widehat{\otimes}}
\begin{document}

\title[Isomorphic classification of projective tensor products of $C(K)$ spaces]{Isomorphic classification of projective tensor products of spaces of continuous functions}

\author{R. M. Causey}
\email{rmcausey1701@gmail.com}

\author{E. M. Galego}
\address{University of S\~ao Paulo, Department of Mathematics, IME, Rua do Mat\~ao 1010,  S\~ao Paulo, Brazil}
\curraddr{Department of Mathematics and Statistics,}
\email{eloi@ime.usp.br}

\author{C. Samuel}
\address{Aix Marseille Universit\'e, CNRS, I2M, Marseille, France}
\email{christian.samuel@univ-amu.fr}

\thanks{2020 \textit{Mathematics Subject Classification}. Primary: 46B03, 46B28.}
\thanks{\textit{Key words}: Spaces $C(K)$, projective tensor products, isomorphism classes, Schreier families, Szlenk index}

\begin{abstract}
 The isomorphic classification is an important question in Banach spaces theory. Guided by a classic result of Bessaga and Pe\l czy\'{n}ski on spaces of continuous functions,  we solve in this paper the problem of obtaining a complete  isomorphic classification of projective tensor products of spaces of scalar continuous functions on infinite countable compact metric spaces. More precisely, we prove that for  any countably infinite  compact metric  spaces $K$ and $L$, the space  $C(K)\widehat{\otimes}_\pi C(L)$ is isomorphic to exactly one of the spaces $C(\omega^{\omega^\xi})\widehat{\otimes}_\pi C(\omega^{\omega^\zeta})$, $0\leqslant \zeta\leqslant \xi<\omega_1$. 

Since the classic Szlenk index proves to be an ineffective tool to solve this problem, our main task throughout the work was to first introduce  auxiliary functionals  $\mathbf{s}_\xi$ and then special functionals $\mathbf{g}_{\xi, 1+\zeta}$ on the spaces $C(K)\widehat{\otimes}_\pi C(L)$   and from there obtain upper and lower bounds  for $\mathbf{g}_{\xi, 1+\zeta}$     with the help of a  Grothendieck's constant. In this way the functionals $\mathbf{g}_{\xi, 1+\zeta}$ became capable of differentiating   the classes of isomorphisms  between spaces  $C(\omega^{\omega^\xi})\widehat{\otimes}_\pi C(\omega^{\omega^\zeta})$, $0\leqslant \zeta\leqslant \xi<\omega_1$.

\end{abstract}

\maketitle

\tableofcontents

\section{Introduction}

We shall use the standard notations and terminology
of Banach space theory (see e.g. \cite{JL}). For $K$ a compact Hausdorff space, we
denote by $C(K)$ the Banach space of all continuous scalar valued functions
defined on $K$ and endowed with the supremum norm. If  $\alpha$ is an ordinal number, then $[0, \alpha]$ denotes the interval $\{\eta: 0 \leq \eta \leq \alpha \}$  endowed with the
order topology.  The space $C([0, \alpha])$ will be denoted by $C(\alpha)$. $\omega$ denotes the first infinite ordinal and $\omega_1$ the first uncountable ordinal.

In 1920 Mazurkiewicz and Sierpi\'{n}ski \cite{MS} took a big step towards obtaining an isomorphic classification of the spaces $C(K)$  when $K$ are countable  compact metric  spaces. In fact they proved that for every countable  metric compact  space there exists an ordinal $\alpha< \omega_1$ such that $K$ is homeomorphic to $[0,\alpha]$.  Forty years later, Bessaga and Pe\l czy\'{n}ski \cite{BP}, \cite[p. 1560]{Rosenthal} established this classification by proving that if $K$ is an infinite countable  metric compact  space then $C(K)$ is isomorphic to $C(\omega^{\omega^\xi})$ for exactly one  ordinal $\xi<\omega_1$. 

The  isomorphic classification of separable spaces  $C(K)$  was completed in 1966 by Milutin when he proved that if  $K$ is an uncountable  compact metric  space then $C(K)$ is isomorphic to $C([0,1])$, where $[0 ,1]$ is the interval of real numbers with the usual topology \cite{M}, \cite[p. 379]{Semadeni}.

On the other hand, with the development of the injective tensor product  $X\widehat{\otimes}_\epsilon Y$        and  projective tensor product   $X\widehat{\otimes}_\pi Y$  of Banach spaces $X$ and $Y$, mainly through Grothendieck's works \cite{Grothendieck}, \cite{Gro} the study of the geometry of these new Banach spaces  has been  widely considered and has had an enormous impact on the theory of Banach spaces (see, e.g., the survey paper \cite{Pi}).  In particular,  if $K$ and $L$ are compact Hausdorff spaces,   many things are already known about  the spaces $C(K)\widehat{\otimes}_\epsilon C(L)$, but due to the somewhat intractable nature of projective tensor products
there are still many things to know about the geometric structure of the spaces   $C(K)\widehat{\otimes}_\pi C(L)$ even in the simplest case where $C(K)$ and $C(L)$ are isomorphic to $C(\omega)$  \cite{Unexpected}.

In this line of research, we only point out that the injective tensor products between spaces  $C(\omega^{\omega^\xi})$, $\xi<\omega_1$, do not present anything new in the sense that these products are still isomorphic to some spaces  $C(\omega^{\omega^\eta})$ with $\eta< \omega_1$. In fact, it is well known that $C(\omega^{\omega^\xi}) \widehat{\otimes}_\epsilon C(\omega^{\omega^{\zeta}})$ is isomorphic to    $C(\omega^{\omega^{max \{\xi, \zeta \}}}),$ see for instance \cite[Lemma 2.4]{ADG}. Consequently, by the Bessaga and Pe\l czy\'{n}ski theorem, the isomorphic classification of spaces  $C(\omega^{\omega^\xi}) \widehat{\otimes}_\epsilon C(\omega^{\omega^{\zeta}})$ is already  known.

In opposition to the injective tensor products between spaces $C(\omega^{\omega^\xi})$, $\xi<\omega_1$,  the projective tensor products between  spaces  $C(\omega^{\omega^\xi})$, $\xi<\omega_1$, are never isomorphic to another $C(K)$ space. Indeed,  by   \cite[Corollary 2.3]{Unexpected} each of these spaces  contains uniformly complemented  copies of $\ell_2^n$ and it cannot therefore be even a ${\mathcal L}_{\infty}$-space \cite[p. 1598]{Rosenthal}.  

  Our main aim in the  present work is to provide a complete isomorphic classification of the spaces  $C(\omega^{\omega^\xi}) \widehat{\otimes}_\pi C(\omega^{\omega^{\zeta}})$,  $\zeta,\xi < \omega_1$.  More precisely,  our major theorem is the following one.
\begin{theorem} \label{mm} If $K$ and $ L$ are infinite countable compact metric  spaces, then $C(K)\tim_\pi C(L)$ is isomorphic to exactly one of the  spaces $C(\omega^{\omega^\xi})\tim_\pi C(\omega^{\omega^\zeta})$, $\zeta\leqslant \xi<\omega_1$. 
\end{theorem}
Theorem \ref{mm} is an immediate consequence of Corollary \ref{coro} which in turn is an extension of  Bessaga and Pe\l czy\'{n}ski theorem for projective tensor products of  spaces $C(K)$.

\

Of course,  Theorem \ref{mm} suggests several questions about the projective tensor products between spaces $C(K)$,  we will highlight only one of them related to space $C([0,1])$. First of all, observe that fixed    $\xi, \zeta, \eta< \omega_1$,  since   $C(\omega^{\omega^\xi}) \widehat{\otimes}_\pi C(\omega^{\omega^{\zeta}})$ is $c_0$-saturated \cite[Theorem 1.1]{GS} and the spaces  $C(\omega^{\omega^\eta}) \widehat{\otimes}_\pi C([0, 1])$   and $C([0,1]) \widehat{\otimes}_\pi C([0, 1])$    contain complemented copies of $\ell_2$ \cite[Theorem 1.2]{Unexpected}, we see that $C(\omega^{\omega^\xi}) \widehat{\otimes}_\pi C(\omega^{\omega^{\zeta}})$ is   not isomorphic to either $C(\omega^{\omega^\eta}) \widehat{\otimes}_\pi C([0, 1])$ or $C([0,1]) \widehat{\otimes}_\pi C([0, 1]).$ Moreover, $C(\omega^{\omega^\xi}) \widehat{\otimes}_\pi C([0, 1])$ is not  isomorphic to $C([0,1]) \widehat{\otimes}_\pi C([0, 1])$, because the first space has the reciprocal Dunford-Pettis  property  and the last one doesn't     \cite[Theorem 8]{Emma}. So, the following   intriguing question naturally arises.
\begin{problem} \label{PPPPP}  Fix $\xi, \zeta <\omega_1$. Suppose that $C(\omega^{\omega^\xi}) \widehat{\otimes}_\pi C([0, 1])$ is isomorphic to $C(\omega^{\omega^\zeta}) \widehat{\otimes}_\pi C([0, 1])$. Is it true  that $\xi=\zeta$?   
\end{problem}

  Note that after the Theorem \ref{mm} and through  Bessaga and Pe\l czy\'{n}ski's theorem and Milutin's theorem, a  solution positive  to  Problems \ref{PPPPP}  would imply a complete isomorphic classification of  the spaces $C(K) \widehat{\otimes}_\pi C(L)$  for  separable  spaces $C(K)$ and $C(L)$.

\

As far as the proof of Theorem \ref{mm} is concerned, the typical way to show that the  spaces $C(\omega^{\omega^\xi})$, $\xi<\omega_1$, are mutually non-isomorphic, is to use the Szlenk index, which is an isomorphic invariant. 
The Szlenk index can be thought of as the complexity required to obtain $\ell_1^+$ combinations of the branches of weakly null trees in the ball of a given space.   We recall the definition of the Szlenk index in section \ref{szlenk index}.

The third named author showed in \cite{Samuel} that the Szlenk index of $C(\omega^{\omega^\xi})$ is $\omega^{\xi+1}$, which immediately yields that $C(\omega^{\omega^\xi})$ cannot be isomorphic to $C(\omega^{\omega^\zeta})$ when $\zeta, \xi<\omega_1$ are distinct. However, it was recently shown in \cite{CGS1} that for any  ordinals $\zeta\leqslant \xi$, the Szlenk index of $C(\omega^{\omega^\xi})\tim_\pi C(\omega^{\omega^\zeta})$ is equal to the Szlenk index of $C(\omega^{\omega^\xi})$. Therefore the Szlenk index is an insufficiently granular tool to provide an isomorphic classification of the projective tensor products.

To solve the problem we are considering,  we first  introduce a family of properties $\mathbf{G}_{\xi,\zeta}$,  parameterized by two ordinals, $\xi,\zeta<\omega_1$. We define these properties in section \ref{Properties G}.  In our properties $\mathbf{G}_{\xi,\zeta}$, and some associated auxiliary properties $\mathbf{S}_\xi$,  the coarser ordinal $\xi$ will quantify the complexity required  of $\ell_1^+$ combinations of the branches of weakly null trees to guarantee that when we obtain an appropriately complex sequence of $\ell_1^+$ combinations of a branch of a weakly null tree in the ball of our space, that sequence will be $2$-weakly summing with $2$-weakly summing norm depending only on the space.  

Then we will define for each Banach space $X$ a (possibly infinite) quantity $\mathbf{g}_{\xi,\zeta}(X)$, and $X$ will have the $\mathbf{G}_{\xi,\zeta}$ property provided that $\mathbf{g}_{\xi,\zeta}(X)<\infty$.  Moreover, if $X$ has a separable dual, then $\mathbf{g}_{\xi,\zeta}(X)=0$ for some (equivalently, every) $\zeta<\omega_1$) if and only if the Szlenk index of $X$ denoted by $Sz(X)$ is less than or equal to $\omega^\xi$,  and if $\mathbf{g}_{\xi,\zeta}(X)<\infty$ for some $\zeta<\omega_1$, then $Sz(X) \leqslant \omega^{\xi+1}$.  Therefore for a fixed $\xi<\omega_1$ and a Banach space $X$ with separable dual, $X$ can only have one of the properties $\mathbf{G}_{\xi,\zeta}$, $\zeta<\omega_1$ in a non-trivial way (that is, $\mathbf{g}_{\xi,\zeta}(X)\in (0,\infty
)$) if and only if $Sz(X)=\omega^{\xi+1}$.   We establish an upper bound of $\mathbf{g}_{\xi,\zeta}(C(\omega^{\omega^\xi})\tim_\pi C(\omega^{\omega^\zeta}))$ in section \ref{positive} and a lower bound of   $\mathbf{g}_{\xi,\zeta}(C(\omega^{\omega^\xi})\tim_\pi C(\omega^{\omega^\zeta}))$                 in section \ref{sharpness}.
In this way, the $\xi$ can be thought of as quantifying some coarser quantification, and the ordinal $\zeta$ provides some finer gradations within the coarser classes.  As with the Szlenk index, these properties are isomorphic invariants. With these tools in hand, we will be ready to present the proof  of Theorem \ref{mm}  in section \ref{last}.

In the next seven   sections we will recall the main definitions, notations and preliminary results about several concepts that will play a central role in this work, namely: $p$-weakly summable sequences with $p=1$ and $p=2$,  Cantor-Bendixson index and Cantor scheme on compact Hausdorff spaces,   trees,  Schreier families,  Szlenk index and probability measures on  Schreier families.

\section{Tensor products and $p$-weakly summable sequences}\label{projective product}

In the sequel $X,Y,U,V...$ denote Banach spaces over the field $\mathbb{K}$, which is either $\mathbb{R}$ or $\mathbb{C}$.  $B_X,...$ denotes the closed unit ball of $X$ and $S_X, ...$ the unit sphere. By subspace, we shall mean closed, linear subspace, and by operator, we shall mean, bounded, linear operator. If  $x\in X$ and $x^*\in X^*$ we denote $x^*(x)$ by $ \langle x^*,x \rangle.$ 
$\mathcal{L}(X,Y)$ denotes the space of operators from $X$ to $Y$ and 
$\mathcal{B}(X,Y)$ denotes the space of bounded,   bilinear forms on $X\times Y.$ 
The space $\mathcal{B}(X,Y)$ is isometrically isomorphic to the space $\mathcal{L}(X,Y^*).$

Fix $u=\sum_{i=1}^mx_i\otimes y_i\in X\otimes Y.$ The injective norm $\| \cdot\|_\varepsilon$ is defined on the tensor product $X\otimes Y$ by 
$$\| u\|_\varepsilon=\sup\left\{\left\|\sum_{i=1}^mx^*(x_i)y^*(y_i)\right\| : x^*\in B_{X^*}\text{ and }y^*\in B_{Y^*}\right\}.$$
The projective norm $\|\cdot\|_\pi$ is defined on the tensor product $X\otimes Y$ by
$$\| u\|_\pi=\sup\left\{\left\|\sum_{i=1}^mb(x_i,y_i)\right\| : b\in B_{\mathcal{B}(X,Y)}\right\}=\sup\left\{\left\|\sum_{i=1}^mT(x_i)(y_i)\right\| : T\in B_{\mathcal{L}(X,Y^*)}\right\}.$$
We denote by $X\widehat{\otimes}_\varepsilon Y$ the completion of  $X\otimes Y$ with respect to $\|\cdot\|_\varepsilon$ and by $X\widehat{\otimes}_\pi Y$ the completion of  $X\otimes Y$ with respect to $\| \cdot\|_\pi.$

We recall that for a Banach space $E$, $1\leqslant p<\infty$,  a sequence $(e_i)_{i=1}^\infty$ in $E$ is said to be \emph{weakly} $p$-\emph{summing} provided that \[\|(e_i)_{i=1}^\infty\|_p^w := \sup\Bigl\{\Bigl(\sum_{i=1}^\infty |\langle e^*, e_i\rangle|^p\Bigr)^{1/p}: e^*\in B_{E^*}\Bigr\}\] is finite. In this case,  $\|(e_i)_{i=1}^\infty\|_p^w$ denotes the \emph{weakly} $p$-\emph{summing} norm of $(e_i)_{i=1}^\infty$.  We note that if $1/p+1/q=1$, \[\|(e_i)_{i=1}^\infty \|_p^w = \sup\Bigl\{\Bigl\|\sum_{i=1}^\infty a_i e_i\Bigr\|: \|(a_i)_{i=1}^\infty\|_{\ell_q}=1\Bigr\}.\]

The following is a consequence of Grothendieck's theorem. In what follows, $k_G$ denotes Grothendieck's constant. If $X$ and $Y$ are Banach spaces we denote by $\Vert\, .\,\Vert_{1,\pi}^w$ (resp.$\Vert\, .\,\Vert_{2,\pi}^w$) the weakly 1-summing (resp.  weakly 2-summing) norm of a sequence of $X\widehat{\otimes}_\pi Y$ and by $\Vert\, .\,\Vert_{2,\varepsilon}^w$ the weakly 2-summing norm of a sequence of $X\widehat{\otimes}_\varepsilon Y$. 

\begin{proposition}\cite[Lemma $1.1$]{Unexpected} Let $K,L$ be compact, Hausdorff spaces.   If  $(f_n)_{n=1}^\infty $    is a sequence in $B_{C(K)}$  such that $\|(f_n)_{n=1}^\infty\|_2^w \leqslant 1$, then for any  sequence $(g_n)_{n=1}^\infty$ in $B_{C(L)}$, $\|(f_n\otimes g_n)_{n=1}^\infty\|_{2,\pi}^w \leqslant k_G.$   
\label{groth}
\end{proposition}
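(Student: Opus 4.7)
The plan is to unpack the weak-$2$-summing norm in the projective tensor product, apply Grothendieck's theorem in its bilinear form, and then combine with the hypothesis $\|(f_n)\|_2^w\leqslant 1$ via the evaluation functionals on $K$.

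First I would rewrite $\|(f_n\otimes g_n)_{n=1}^\infty\|_2^w$ in terms of the duality $(C(K)\widehat{\otimes}_\pi C(L))^\ast = \mathcal{B}(C(K),C(L))$. Explicitly, this norm equals
\[
\sup\Bigl\{\Bigl(\sum_{n=1}^\infty |B(f_n,g_n)|^2\Bigr)^{1/2} : B\in B_{\mathcal{B}(C(K),C(L))}\Bigr\},
\]
so it suffices to fix an arbitrary bilinear form $B$ of norm at most $1$ and show $\sum_n |B(f_n,g_n)|^2 \leqslant k_G^2$.

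The main step is to invoke Grothendieck's theorem in the form which says that for every bounded bilinear $B$ on $C(K)\times C(L)$, there exist regular probability measures $\mu$ on $K$ and $\nu$ on $L$ with
\[
|B(f,g)| \leqslant k_G \|B\| \Bigl(\int_K |f|^2\,d\mu\Bigr)^{1/2}\Bigl(\int_L |g|^2\,d\nu\Bigr)^{1/2}
\]
for all $f\in C(K)$ and $g\in C(L)$. Squaring and summing,
\[
\sum_{n=1}^\infty |B(f_n,g_n)|^2 \leqslant k_G^2\sum_{n=1}^\infty\Bigl(\int_K |f_n|^2\,d\mu\Bigr)\Bigl(\int_L |g_n|^2\,d\nu\Bigr) \leqslant k_G^2 \sum_{n=1}^\infty \int_K |f_n|^2\,d\mu,
\]
where in the last inequality I use that $\|g_n\|_\infty\leqslant 1$, so $\int_L |g_n|^2\,d\nu\leqslant 1$.

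To finish, I interchange the sum and the integral (by monotone convergence), obtaining $\int_K \sum_n |f_n(t)|^2\,d\mu(t)$. Since each point evaluation $\delta_t$ belongs to $B_{C(K)^*}$ and $\|(f_n)_{n=1}^\infty\|_2^w\leqslant 1$, we have $\sum_n |f_n(t)|^2 = \sum_n |\langle \delta_t,f_n\rangle|^2 \leqslant 1$ for every $t\in K$. Integrating against the probability measure $\mu$ yields $\sum_n\int_K |f_n|^2\,d\mu \leqslant 1$, and hence $\sum_n |B(f_n,g_n)|^2\leqslant k_G^2$, as desired. The only conceptual step here is recognizing that the hypothesis on $(f_n)$ controls the pointwise square sums, which is exactly what Grothendieck's bilinear estimate demands; the rest is formal manipulation, so there is no substantive obstacle.
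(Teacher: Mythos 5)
Your argument is correct. The paper does not prove this proposition itself (it is quoted from \cite[Lemma 1.1]{Unexpected}), and your proof is the standard one: identify $(C(K)\widehat{\otimes}_\pi C(L))^*$ with $\mathcal{B}(C(K),C(L))$, apply Grothendieck's theorem in its bilinear-form formulation to get the domination $|B(f,g)|\leqslant k_G\|B\|\,\|f\|_{L_2(\mu)}\|g\|_{L_2(\nu)}$ with probability measures $\mu,\nu$, and then use that $\|(f_n)_{n=1}^\infty\|_2^w\leqslant 1$ controls the pointwise sums $\sum_n|f_n(t)|^2$ via the evaluation functionals $\delta_t\in B_{C(K)^*}$, so that monotone convergence gives $\sum_n\|f_n\|_{L_2(\mu)}^2\leqslant 1$; together with $\|g_n\|_{L_2(\nu)}\leqslant 1$ this yields $\sum_n|B(f_n,g_n)|^2\leqslant k_G^2$, which is exactly the claimed bound.
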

The following facts regarding the tensorization of weakly $p$-summing norms behave under the formation of projective and injective tensor products. 
\begin{proposition} Let $X,Y$ be Banach spaces and fix a sequence $(x_n)_{n=1}^\infty$ in $X$ and a sequence  $(y_n)_{n=1}^\infty$ in   $Y$. \begin{enumerate}[(i)]\item $\|(x_n\otimes y_n)_{n=1}^\infty\|_{1,\pi}^w \leqslant \|(x_n)_{n=1}^\infty\|_1^w \|(y_n)_{n=1}^\infty\|_1^w$. \item $\|(x_n\otimes y_n)_{n=1}^\infty\|_{2,\ee}^w \leqslant \|(x_n)_{n=1}^\infty\|_2^w \|(y_n)_{n=1}^\infty\|_\infty$. \end{enumerate} Here, $\|\cdot\|_{1,\pi}^w$ denotes the weakly $1$-summing norm of the sequence of tensors considered as members of $X\tim_\pi Y$ and $\|\cdot\|_{2,\ee}^w$ denotes the weakly $2$-summing norm of the sequence of tensors considered as members of $X\tim_\ee Y$. 
\label{haig}
\end{proposition}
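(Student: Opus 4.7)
The plan is to handle (i) with a Rademacher averaging trick and (ii) with a direct computation from the definitions.

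For (i), I will rewrite $\|(x_n\otimes y_n)\|_{1,\pi}^w = \sup_b \sum_n |b(x_n,y_n)|$, the supremum being taken over $b$ in the unit ball of $\mathcal{B}(X,Y) = (X\tim_\pi Y)^*$. Fix such a $b$ and a finite $N$. Choosing unimodular scalars $\epsilon_n$ so that $\epsilon_n b(x_n,y_n)=|b(x_n,y_n)|$ and absorbing each $\epsilon_n$ into $x_n$ (which preserves the weak $1$-summing norm), I reduce to showing
$$\sum_{n=1}^N b(x_n,y_n) \;\leq\; \|(x_n)\|_1^w\,\|(y_n)\|_1^w$$
when every summand is non-negative. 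The key step is the identity
$$\sum_{n=1}^N b(x_n,y_n) \;=\; \mathbb{E}_\delta\, b\!\left(\sum_{n=1}^N \delta_n x_n,\ \sum_{n=1}^N \delta_n y_n\right),$$
valid for iid Rademachers $\delta_n\in\{\pm 1\}$ by bilinearity of $b$ together with $\mathbb{E}\delta_n\delta_m=\delta_{nm}$. Since $\|b\|\leq 1$ and $\|\sum_n \delta_n u_n\|\leq\|(u_n)\|_1^w$ for every sign choice, taking absolute values on the right-hand side yields the desired bound; letting $N\to\infty$ completes the argument.

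For (ii), I will use the equivalent description $\|(z_n)\|_2^w = \sup_{\|a\|_{\ell_2}\leq 1}\|\sum_n a_n z_n\|$ together with the formula
$$\Big\|\sum_n a_n x_n\otimes y_n\Big\|_\ee \;=\; \sup_{x^*\in B_{X^*},\,y^*\in B_{Y^*}}\Big|\sum_n a_n x^*(x_n)y^*(y_n)\Big|.$$
Interchanging the suprema and applying $\ell_2$-duality in $a$ gives
$$\|(x_n\otimes y_n)\|_{2,\ee}^w \;=\; \sup_{x^*\in B_{X^*},\,y^*\in B_{Y^*}}\Big(\sum_n |x^*(x_n)|^2\,|y^*(y_n)|^2\Big)^{1/2}.$$
I then split the inner $\ell_2$-sum via the H\"older-type estimate $(\sup_n|y^*(y_n)|)\cdot(\sum_n|x^*(x_n)|^2)^{1/2}$, recognize the factors as bounded by $\|y^*\|\cdot\|(y_n)\|_\infty$ and $\|x^*\|\cdot\|(x_n)\|_2^w$ respectively, and take suprema over $\|x^*\|,\|y^*\|\leq 1$.

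The only genuine insight is the Rademacher-orthogonality identity used in (i); statement (ii) is essentially a chase of the definitions, with the only real choice being which factor to pull out in $\ell_\infty$ and which in $\ell_2$.
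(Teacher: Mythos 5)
Your proposal is correct and follows essentially the same route as the paper: part (i) is the same Rademacher-orthogonality averaging after normalizing with unimodular scalars (you pair with a bilinear form in $B_{\mathcal{B}(X,Y)}$ while the paper writes the identity $\sum_i \delta_i x_i\otimes y_i = \mathbb{E}\bigl((\sum_i\delta_i\ee_i x_i)\otimes(\sum_i \ee_i y_i)\bigr)$ and bounds the projective norm directly, a purely cosmetic difference), and part (ii) is the same reduction to elementary functionals $x^*\otimes y^*$ and $\ell_2$-normalized coefficients followed by the Cauchy--Schwarz/H\"older split. No gaps.
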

\begin{proof}$(i)$ Let $(\ee_i)_{i=1}^\infty$ be a Rademacher sequence.  Fix $m\in\nn$ and scalars $(\delta_i)_{i=1}^m$ such that $|\delta_i|=1$ for all $1\leqslant i\leqslant m$, 
\[\sum_{i=1}^m \delta_i x_i\otimes y_i = \mathbb{E}\left(\Bigl(\sum_{i=1}^m \delta_i \ee_i x_i\Bigr)\otimes \Bigl(\sum_{i=1}^m \ee_i y_i\Bigr)\right),\] where $\mathbb{E}$ is the expectation with respect to $(\ee_i)_{i=1}^\infty$, 
so 
$$ \Bigl\|\sum_{i=1}^m \delta_i x_i \otimes y_i\Bigr\|_\pi  \leqslant \mathbb{E}\left(\Bigl\|\sum_{i=1}^m \delta_i \ee_i x_i\Bigr\|\Bigl\|\sum_{i=1}^m \ee_i y_i\Bigr\|\right) \leqslant \|(x_i)_{i=1}^\infty \|_1^w \|(y_i)_{i=1}^\infty\|_1^w. $$

$(ii)$ Fix $x^*\in B_{X^*}$ and $y^*\in B_{Y^*}$.   Fix $m\in\nn$ and scalars $(a_i)_{i=1}^m$ such that $1=\sum_{i=1}^m |a_i|^2$. Then  \begin{align*} \Bigl|\Bigl\langle x^*\otimes y^*, \sum_{i=1}^m a_ix_i\otimes y_i\Bigr\rangle \Bigr| &  \leqslant \Bigl\|\sum_{i=1}^m a_i \langle y^*, y_i\rangle x_i\Bigr\| \leqslant \|(x_i)_{i=1}^\infty\|_2^w \|(a_i \langle y^*, y_i\rangle)_{i=1}^m\|_{\ell_2} \\ & \leqslant \|(x_i)_{i=1}^\infty\|_2^w \|y^*\|\|(y_i)_{i=1}^\infty\|_\infty \|(a_i)_{i=1}^m\|_{\ell_2} \\ & \leqslant \|(x_i)_{i=1}^\infty\|_2^w \|(y_i)_{i=1}^\infty\|_\infty. \end{align*} 

\end{proof}

\section{On  Cantor-Bendixson index of compact Hausdorff spaces}

For a compact, Hausdorff space $K$, we let $d(K)$ denote the \emph{Cantor-Bendixson derivative of} $K$, which consists of the points of $K$ which are not isolated in $K$. We let $d(\varnothing)=\varnothing$.     Note that either $d(K)$ is empty, or $d(K)$ is also compact, Hausdorff.  We define $d^{(0)}(K)=K,$ $d^{(\eta+1)}(K)=d(d^{(\eta)}(K)),$ and if $\eta$ is a limit ordinal, $d^{(\eta)}(K)=\bigcap_{\nu<\eta}d^{(\nu)}(K).$

   By the Baire category theorem, every non-empty subset of a countable, compact, Hausdorff space $K$ has an isolated point. From this and a cardinality argument, for each countable, compact, Hausdorff space, there exists a countable ordinal $\eta$ such that $d^{(\eta)}(K)\neq \varnothing$ and $d^{(\eta+1)}(K)=\varnothing$. Moreover, $d^{(\eta)}(K)$ is finite.  The ordinal $\eta+1$ is called the \emph{Cantor-Bendixson index} of $K$. We denote the Cantor-Bendixson index of $K$ by $CB(K)$, and since $CB(K)$ is a successor, we let $CB(K)-1$ denote the immediate predecessor of $CB(K)$.    We let \[C_0(K)=\{f\in C(K): f|_{d^{(CB(K)-1)}(K)}\equiv 0\}.\]  It is easy to see that $d^{(\omega^\xi)}([0,\omega^{\omega^\xi}])=\{\omega^\xi\}$ \cite[Theorem 8.6.6]{Semadeni}, so this notation is consistent with the usual notation of $C_0(\omega^{\omega^\xi})$.

 By classical facts about countable, compact, Hausdorff spaces, two countable, compact, Hausdorff spaces $K,L$ are homeomorphic if and only if $CB(K)=CB(L)$ and $K^{CB(K)-1}$ and $L^{CB(K)-1}$ have the same cardinality.  Moreover, any homeomorphism between $K$ and $L$ must map $K^{CB(K)-1}$ to $L^{CB(L)-1}$.   In this case, $C_0(K)$ is isometrically isomorphic to $C_0(L)$. Indeed, if $\phi:L\to K$ is a homeomorphism, then the restriction of the map $\Phi:C(K)\to C(L)$,  $\Phi f=f\circ \phi$, to $C_0(K)$ is an isometric isomorphism between $C_0(K)$ and $C_0(L)$.

\section{On Cantor scheme on compact Hausdorff spaces} \label{seccantor}

Let $K$ be a compact Hausdorff space.   We let $\mathcal{M}(K)=C(K)^*$, the space  of Radon measures on $K$. For each $\varpi\in K$, we denote by $\delta_\varpi$ the Dirac evaluation functional given by $\delta_\varpi(f)=f(\varpi)$.  

Let $\Delta_m=\{(\ee_n)_{n=1}^m: \ee_n\in \{\pm 1\}\}$.  We let $\Delta_0=\{\varnothing\},$  $\Delta_{\leqslant m}=\cup_{n=0}^m \Delta_n $,  and $\Delta_{<m}=\cup_{n=0}^{m-1} \Delta_n.$ 

  A \emph{Cantor scheme on} $K$ will refer to a family of non-empty subsets $D=(A_d)_{d\in \Delta_{\leqslant m}}$ of $K$ such that for each $d\in \Delta_{<m}$, $A_d\supset A_{d\smallfrown (-1)}\cup A_{d\smallfrown (1)}$ and $A_{d\smallfrown(-1)}\cap A_{d\smallfrown (1)}=\varnothing$.   Note that we do not have any requirement that $A_\varnothing$ be equal to $K$.  Given a Cantor scheme $D$ on $K$, a $D$-\emph{selector} is a function $\psi:\Delta_m\to K$ such that for each $d\in \Delta_m$, $\psi(d)\in A_d$.    

Given a Cantor scheme $D=(A_d)_{d\in \Delta_{\leqslant m}}$ on $K$, we say that a sequence $(f_i)_{i=1}^m$ in $ C(K)$ is \emph{compatible with} the Cantor scheme $D$ provided that for any $1\leqslant i \leqslant m$ and any $d\in \Delta_{i-1}$, $f_i|_{A_{d\smallfrown (\ee)}}\equiv \ee$ for $\ee\in \{\pm 1\}$.  Note that if $(f_i)_{i=1}^m$ is compatible with $D$, then for any $D$-selector $\psi$ and $1\leqslant i\leqslant m$, $f_i(\psi(\ee_1, \ldots, \ee_m))=\ee_i$, since $\psi(\ee_1, \ldots, \ee_m)\in A_{(\ee_1, \ldots, \ee_i)}$ and $f_i|_{A_{(\ee_1, \ldots, \ee_i)}}\equiv \ee_i$.  

  Given a Cantor scheme $D=(A_d)_{d\in \Delta_{\leqslant m}}$ on $K$ and a $D$-selector $\psi$, we define the associated Rademacher sequence $(\mu^\psi_i)_{i=1}^m$ by letting 
  \[\mu^\psi_i=\frac{1}{2^m}\sum_{(\varepsilon_1,\ldots,\varepsilon_m)\in\Delta_m}\ee_i\delta_{\psi(\ee_1, \ldots, \ee_m)}.\]

  If $\mu$ denotes the uniform probability measure on $\{\psi(d): d\in \Delta_m\}$, then $\mu_i^\psi\ll \mu$ for each $1\leqslant i\leqslant m$.   Then if $r_i=\frac{d\mu_i^\psi}{d\mu}$ is the Radon-Nikodym derivative of $\mu_i^{\psi}$ with respect to $\mu$, $(\mu_i^\psi)_{i=1}^m$ is isometrically equivalent to $(r_i)_{i=1}^m \subset L_1(\mu)$.  Moreover, $(r_i)_{i=1}^m$ is an orthonormal system in $L_2(\mu)$, which means it is weakly $2$-summing with $2$-weakly summing norm equal to $1$ when considered as a sequence in $L_2(\mu)$. Since $\mu$ is a probability measure, the formal inclusion from $L_2(\mu)$ to $L_1(\mu)$ is norm $1$, which means that $(r_i)_{i=1}^m$ is weakly $2$-summing in $L_1(\mu)$ with $2$-weakly summing norm in $L_1(\mu)$ not exceeding $1$. It is clear that the $2$-weakly summing norm of $(r_i)_{i=1}^m$, when considered as a sequence in $L_1(\mu)$ is exactly $1$, since $\|r_i\|_{L_1(\mu)}=1$ for each $1\leqslant i\leqslant m$.    Therefore $(\mu_i^\psi)_{i=1}^m$ satisfies $\|(\mu_i^\psi)_{i=1}^m\|_2^w \leqslant 1$.     

\begin{rem}\upshape Note that if $D=(A_d)_{d\in \Delta_{\leqslant m}}$ is a Cantor scheme on $K$, $(f_i)_{i=1}^m$ is compatible with $D$, and $\psi$ is a $D$-selector, then $(f_i)_{i=1}^m$ and $(\mu^\psi_i)_{i=1}^m$ are biorthogonal. Indeed, $$\langle \mu^\psi_j, f_i\rangle =\frac{1}{2^m} \sum_{(\ee_1, \ldots, \ee_m)\in \Delta_m}\varepsilon_jf_i(\psi(\ee_1, \ldots, \ee_m)) = \frac{1}{2^m}\sum_{(\ee_1, \ldots, \ee_m)\in \Delta_m} \ee_i \ee_j =\delta_{i,j}.$$

\label{bio}
\end{rem}

We now arrive at the following consequence for witnessing our eventual lower estimates.  The following proposition and its proof deal with both projective and injective tensor norms. Therefore we distinguish between projective and injective tensor norms within the following proposition, and thereafter return to considering only projective tensor norms. 

\begin{proposition} Let $K$ be compact, Hausdorff  space and let $X$ be a Banach space.  Let $D=(A_d)_{d\in \Delta_{\leqslant l}}$ be a Cantor scheme on $K$ and let $(f_i)_{i=1}^l$ be compatible with $D$.    Fix $1\leqslant s_1<\ldots <s_m\leqslant l$ and suppose that  $(x_j)_{j=1}^n\subset X$, $(x^*_j)_{j=1}^m\subset B_{X^*}$, and  $0=r_0<\ldots < r_m=n$  are such that $\langle x^*_i, x_j\rangle = 1$ if $j\in (r_{i-1}, r_i]$, and $\langle x^*_i, x_j\rangle=0$ otherwise.  Then for any sequences $(a_i)_{i=1}^m$, $(b_j)_{j=1}^n$ of positive numbers such that $\sum_{i=1}^m a_i^2=1$ and $\sum_{j=r_{i-1}+1}^{r_i}b_j=1$ for each $1\leqslant i\leqslant m$, \[\Bigl\|\sum_{i=1}^m\sum_{j=1}^n a_ib_j f_{s_i}\otimes x_j\Bigr\|_\pi\geqslant 1.\] 

\label{lower}
\end{proposition}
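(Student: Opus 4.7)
The plan is to build, for a fixed $D$-selector $\psi$, a single bilinear form (equivalently, an operator $T\colon C(K)\to X^*$) of norm at most $1$ whose action on the tensor $u:=\sum_{i,j}a_ib_jf_{s_i}\otimes x_j$ is exactly $1$. Since $\|u\|_\pi=\sup\{|B(u)|:B\in B_{\mathcal{B}(C(K),X)}\}$, this yields $\|u\|_\pi\geqslant 1$.

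Fix a $D$-selector $\psi$ and the associated Rademacher-like measures $(\mu^\psi_i)_{i=1}^l\subset \mathcal{M}(K)$. Define
\[
T\colon C(K)\to X^*,\qquad T(f)=\sum_{i=1}^m a_i\langle \mu^\psi_{s_i},f\rangle x^*_i.
\]
I would factor $T=\Psi\circ\Phi$ through $\ell_2^m$, with
\[
\Phi(f)=(\langle \mu^\psi_{s_i},f\rangle)_{i=1}^m,\qquad \Psi((\lambda_i)_{i=1}^m)=\sum_{i=1}^m a_i\lambda_i x^*_i.
\]
The norm of $\Phi$ is at most $1$ because, by the paragraph preceding the proposition, $\|(\mu^\psi_i)_{i=1}^l\|_2^w\leqslant 1$, and passing to a subsequence only decreases the weak $2$-summing norm: for $\|f\|\leqslant 1$,
\[
\|\Phi(f)\|_{\ell_2}=\Bigl(\sum_{i=1}^m |\langle \mu^\psi_{s_i},f\rangle|^2\Bigr)^{1/2}\leqslant \Bigl(\sum_{i=1}^l |\langle \mu^\psi_i,f\rangle|^2\Bigr)^{1/2}\leqslant 1.
\]
The norm of $\Psi$ is at most $1$ by Cauchy--Schwarz using $\sum_i a_i^2=1$ and $\|x^*_i\|\leqslant 1$: $\|\Psi((\lambda_i))\|_{X^*}\leqslant \sum_i|a_i\lambda_i|\leqslant (\sum_i a_i^2)^{1/2}\|(\lambda_i)\|_{\ell_2}=\|(\lambda_i)\|_{\ell_2}$. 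Hence $\|T\|\leqslant 1$, and the bilinear form $B(f,x):=T(f)(x)$ satisfies $\|B\|\leqslant 1$.

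Finally, evaluating $B$ on $u$, Remark \ref{bio} gives $\langle \mu^\psi_{s_k},f_{s_i}\rangle=\delta_{k,i}$, and the hypothesis on the pairs $(x^*_i,x_j)$ gives $\langle x^*_k,x_j\rangle=\mathbf{1}_{(r_{k-1},r_k]}(j)$, so
\[
B(u)=\sum_{i=1}^m\sum_{j=1}^n a_ib_j\sum_{k=1}^m a_k\langle \mu^\psi_{s_k},f_{s_i}\rangle\langle x^*_k,x_j\rangle=\sum_{i=1}^m a_i^2\sum_{j=r_{i-1}+1}^{r_i}b_j=\sum_{i=1}^m a_i^2=1.
\]
Combining $\|B\|\leqslant 1$ with $|B(u)|=1$ yields $\|u\|_\pi\geqslant 1$.

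The only step with any content is the factorization bound $\|T\|\leqslant 1$; everything else is a mechanical computation using biorthogonality and the block structure of the $(x^*_i,x_j)$. The essential input is the $2$-weakly summing estimate for the Rademacher system $(\mu^\psi_i)$, which is precisely the reason these sequences were introduced in the preceding discussion, so no new difficulty arises.
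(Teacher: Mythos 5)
Your proof is correct and takes essentially the same approach as the paper: the bilinear form $B(f,x)=\sum_{i=1}^m a_i\langle\mu^\psi_{s_i},f\rangle\langle x^*_i,x\rangle$ is precisely the norming functional $\sum_{i=1}^m a_i\,\mu^\psi_{s_i}\otimes x^*_i\in (C(K)\widehat{\otimes}_\pi X)^*$ used there, and your evaluation against the tensor is the same biorthogonality computation. The only (cosmetic) difference is that you establish $\|B\|\leqslant 1$ by an explicit factorization through $\ell_2^m$ with Cauchy--Schwarz, whereas the paper gets the same estimate by citing Proposition \ref{haig}$(ii)$ together with $\sum_{i=1}^m a_i^2=1$.
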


\begin{proof} Let $\psi$ be any $D$-selector and let $(\mu_i)_{i=1}^l = (\mu_i^\psi)_{i=1}^l$ be the associated Rademacher sequence.  Since  $$\|(\mu_{s_k})_{k=1}^m\|_2^w\leqslant  \|(\mu_i)_{i=1}^l\|_2^w \leqslant 1,$$

by Proposition \ref{haig},  \[\|(\mu_{s_k}\otimes x^*_k)_{k=1}^m\|_{2,\ee}^w \leqslant \|(\mu_{s_k})_{k=1}^m\|_2^w \|(x^*_k)_{k=1}^m\|_\infty \leqslant 1.\]     Here, $\mu_{s_i}\otimes x^*_i$ is treated as a member of $\mathcal{M}(K)\tim_\ee X^*\subset (C(K)\tim_\pi X)^*$.   Therefore \[\Bigl\|\sum_{i=1}^m a_i\mu_{s_i}\otimes x^*_i\Bigr\|_\ee \leqslant 1.\]    

Note that by  Remark \ref{bio}  \begin{align*}\Bigl\|\sum_{i=1}^m\sum_{j=1}^n a_ib_j f_{m_i}\otimes x_j\Bigr\|_\pi & \geqslant \Bigl\langle \sum_{i=1}^m a_i\mu_{s_i}\otimes x^*_i, \sum_{i=1}^m\sum_{j=1}^n a_ib_j f_{s_i}\otimes x_j\Bigr\rangle \\ & = \sum_{i=1}^m\sum_{j=1}^n \sum_{k=1}^m a_ia_kb_j\langle \mu_{s_k}, f_{s_i}\rangle\langle x^*_k,x_j\rangle \\ &  = \sum_{i=1}^ma_i^2\sum_{j=r_{i-1}+1}^{r_i} b_j =1. \end{align*} 

\end{proof}

\section{On  trees}

Given a set $\Lambda$, we let $\Lambda^{<\omega}=\cup_{n=0}^\infty \Lambda^n$ denote the set  of all finite sequences of elements in $\Lambda$.  We note that $\Lambda^{<\omega}$ also includes the empty sequence, which we denote by $\varnothing$.    We let $\Lambda^{\omega}$ the set of all infinite sequences in $\Lambda.$

 For $t\in \Lambda^{<\omega}$, we let $|t|$ denote the length of $t,$ in particular $\vert\varnothing\vert=0.$  For $t\in \Lambda^{<\omega}$ and $0\leqslant i\leqslant |t|$, we let $t|_i$ denote the initial segment of $t$ having length $i$.  
 We note that $s\preceq t$ if $\vert s\vert \leqslant \vert t\vert$ and $s=t\vert_{\vert s\vert}$. $\preceq$ is called the initial segment order.

A non-empty subset $T$ of $\Lambda^
{<\omega}$ is called a \emph{tree on $\Lambda$} (or just a \emph{tree}) provided that if $s\preceq t$ and $t\in T$, then $s\in T$.

We denote by $[\nn]^{<\omega}$ the set of finite subsets of $\nn$. We denote by $[\nn]$ the set of infinite subsets of $\nn$.  Similarly, for $M\in[\nn]$, we let $[M]^{<\omega}$ and $[M]$ denote the sets of finite and infinite subsets of $M$, respectively. By an abuse of notation, we identify subsets of $\nn$ with finite or infinite strictly increasing   sequences. We use set and sequential notation interchangeably.  More precisely, we identify a subset $E$ of $\nn$ with the sequence obtained by listing the members of $E$ in increasing order.  With this identification, $[\nn]^{<\omega}$ can be treated as a subset of $\nn^{<\omega}$.  For a finite subset $E$ of $\nn$, the notation $|E|$ for the cardinality of $E$ is equal to the length of $E$ when treated as a sequence, so the notation is unambiguous.  For $E\subset \nn$ and $1\leqslant n\leqslant |E|$, we let $E(n)$ denote the $n^{th}$ smallest member of $E$. Similarly, for an infinite subset $M$ of $\nn$ and $n\in\nn$, we let $M(n)$ denote the $n^{th}$ smallest member of $M$.   For $E,F\in[\nn]^{<\omega},$ we let  $E<F$ denote the relation that either $E=\varnothing$, $F=\varnothing$, or $\max E<\min F$ and we let $E\smallfrown F$ denote the concatenation of $E$ with $F$.

In the sequel we will use collection indexed by a tree $\mathcal{F}$ on $\nn.$ This collection  will always viewed as a tree. Specifically, the tree
$$\{(x_{(m_1)},x_{(m_1,m_2)},\ldots,x_{(m_1,m_2,\ldots,m_k)}) : k\geq0, (m_1,m_2,\ldots,m_k)\in\mathcal{F}\}
$$ is associated with the collection $\mathbf{u}=(u_E)_{E\in \mathcal{F}\setminus\{\varnothing\}}$  in $U$ \cite[p. 75]{OSZ}.

We say that a family $\mathbf{u}=(u_E)_{E\in [\nn]^{<\infty}}$ in  $U$ is \emph{weakly null} provided that for each $E\in [\nn]^{<\infty}, $ the sequence $(u_{E\smallfrown (k)})_{E<k}$ is weakly null.

For $(m_i)_{i=1}^p, (n_i)_{i=1}^p\in[\nn]^{<\omega}$, we say $(n_i)_{i=1}^p$ is a \emph{spread} of $(m_i)_{i=1}^p$ provided that $m_i\leqslant n_i$ for all $1\leqslant i\leqslant p$.  We say $T\subset [\nn]^{<\omega}$ is \emph{spreading} provided $T$ contains all spreads of its members.   
  We say a set $T\subset [\nn]^{<\omega}$ is \emph{hereditary} provided that if $E\subset F$ and $F\in T$, then $E\in T$. Using our identification of subsets of $\nn$ with sequences in $\nn$, each hereditary subset of $[\nn]^{<\omega}$ can be treated as a tree on $\nn$.

 We endow the power set $2^\nn$ of $\nn$ with the Cantor topology, which is the topology making the identification $2^\nn\ni E\leftrightarrow 1_E\in \{0,1\}^\nn$ a homeomorphism, where $\{0,1\}^\nn$ is endowed with the product of the discrete topology.

 We say a subset $T$ of $2^\nn$ is \emph{compact} if it is compact in the Cantor topology. 
We say a subset $T$ of $[\nn]^{<\omega}$ is \emph{regular} provided it is compact, spreading, and hereditary.

    For a tree $T$ on $\Lambda,$ we let $MAX(T)$ denote the set of all maximal members of $T$ with respect to the initial segment ordering.  For a tree $T$, we define the \emph{derived tree} of $T$, denoted by $T^{(1)}$, by $T^{(1)}=T\setminus MAX(T).$  We define the transfinite derived trees by  $T^{(0)}=T,$ $T^{(\xi+1)}=(T^\xi)^{(1)},$ and if $\xi$ is a limit ordinal, $T^{(\xi)} = \bigcap_{\zeta<\xi}T^{(\zeta)}.$   We say $T$ is \emph{well-founded} provided there exists an ordinal $\xi$ such that $T^{(\xi)}=\varnothing$. In this case, the \emph{rank of} $T$ is defined to be the minimum $\xi$ such that $T^{(\xi)}=\varnothing$. We say $T$ is \emph{ill-founded} if it is not well-founded.

\section{On Schreier families $\mathcal{S}_\xi$ and   $\mathcal{S}_\zeta[\mathcal{S}_\xi].$}\label{defschreier}
	
We next recall the definition of the Schreier families \cite{AA}, which are regular families of subsets of $[\nn]^{<\omega}$.  We let $\mathcal{S}_0=\{\varnothing\}\cup \{(n):n\in\nn\}.$  Assuming that $\mathcal{S}_\xi$ has been defined for some $\xi<\omega_1$, we let \[\mathcal{S}_{\xi+1}=\{\varnothing\}\cup \Bigl\{\bigcup_{n=1}^m E_n: E_1<\ldots <E_m, \varnothing\neq E_n\in \mathcal{S}_\xi, m\leqslant E_1\Bigr\},\] and if $\xi$ is a limit ordinal, fix a sequence $\xi_n\uparrow\xi$ and let \[\mathcal{S}_\xi =\{\varnothing\} \cup \{E: \exists n\leqslant E \text{ and } E\in \mathcal{S}_{\xi_n+1}\}.\]  We refer to this sequence as the \emph{characteristic sequence in} $\xi$.  We note that the sequence $(\xi_n)_{n=1}^\infty$ can be chosen to satisfy the property that $\mathcal{S}_{\xi_n+1}\subset \mathcal{S}_{\xi_{n+1}}$ for all $n\in\nn$, which was shown in \cite{Concerning}.   We assume these sequences are chosen in this way, from which it follows that \[\mathcal{S}_\xi =\{\varnothing\} \cup \Bigl\{E: \varnothing\neq E\in \mathcal{S}_{\xi_{\min E}+1}\}.\]

We also recall a common method for constructing new families from old. If $\mathcal{F}, \mathcal{G}\subset [\nn]^{<\omega}$,  \[\mathcal{F}[\mathcal{G}]=\{\varnothing\}\cup \Bigl\{\bigcup_{n=1}^m E_n: E_1<\ldots <E_m, \varnothing\neq E_n\in \mathcal{G}, (\min E_n)_{n=1}^m\in  \mathcal{F}\Bigr\}.\]

Our main interest with this construction will be $\mathcal{S}_\zeta[\mathcal{S}_\xi]$ for $\zeta, \xi<\omega_1$, in which case  \[\mathcal{S}_\zeta[\mathcal{S}_\xi]=\{\varnothing\}\cup\Bigl\{\bigcup_{n=1}^m E_n: E_1<\ldots <E_m, \varnothing\neq E_n\in \mathcal{S}_\xi, (\min E_n)_{n=1}^m\in \mathcal{S}_\zeta\Bigr\}.\]   

 We note that $\mathcal{S}_{\xi+1}=\mathcal{S}_1[\mathcal{S}_\xi]$ for all $\xi<\omega_1.$ For $0\leq\xi,\zeta<\omega_1,$ $S_\xi$ and $\mathcal{S}_\zeta[\mathcal{S}_\xi]$
are trees on $\nn.$

 We let $MAX(\mathcal{S}_\xi)$   denote the set of all maximal elements of $\mathcal{S}_\xi$  with respect to the initial segment order. We note that since $\mathcal{S}_\xi$ is spreading and hereditary, $MAX(\mathcal{S}_\xi)$ is also the set of maximal elements of $\mathcal{S}_\xi$ with respect to inclusion. Similarly, we denote by $MAX(\mathcal{S}_\zeta[\mathcal{S}_\xi])$ the set of all maximal elements of $\mathcal{S}_\zeta[\mathcal{S}_\xi]$ with respect to the initial segment ordering (equivalently, with respect to inclusion).

We isolate the following important properties of the Schreier families. Each of these properties can be found in \cite{Concerning}.

\begin{lemma} Fix $\xi,\zeta<\omega_1$. \begin{enumerate}[(i)]\item  $\mathcal{S}_\xi$ is a regular family with a  Cantor-Bendixson index $ I_{CB}(\mathcal{S}_\xi)= \omega^\xi+1$. More generally,  $\mathcal{S}_\zeta[\mathcal{S}_\xi]$ is a regular family with a Cantor-Bendixson index $I_{CB}(\mathcal{S}_\zeta[\mathcal{S}_\xi])=\omega^{\xi+\zeta}+1.$ 
\item If $\xi\leqslant \zeta$,  there exists $k=k(\xi,\zeta)\in\nn$ such that if $E\in \mathcal{S}_\xi$ and $k\leqslant E$, then $E\in \mathcal{S}_\zeta$. 

\item For $E\in \mathcal{S}_\xi$, either $E\in MAX(\mathcal{S}_\xi)$ or for all $n\in\nn$ and  $E<n,$ $E\smallfrown(n)\in \mathcal{S}_\xi.$ 

\item For $E,F\in \mathcal{S}_\xi$, if $E\in MAX(\mathcal{S}_\xi)$ and $E \preceq F $, then $E=F$.  \end{enumerate} 
\label{schreier}
\end{lemma}
\begin{rem} \label{Cantor}  
 Note that $C_0(\omega^{\omega^\xi})$ is isometrically isomorphic to $C_0(\mathcal{S}_\xi)=\{f\in C(\mathcal{S}_\xi): f(\varnothing)=0\}.$ We will use this in the  proof of  Theorem \ref{negative one}. 
\end{rem}

   Next  we state that  every infinite subset $M$ of $\nn$ admits a unique decomposition by successive members of a Schreier family.

\begin{lemma}\label{decomposition}Fix $\xi,\zeta<\omega_1.$ For each infinite subset $M$ of $\nn$, there exist a unique sequence $(F_k)_{k=1}^\infty$ of successive members in $MAX(\mathcal{S}_\xi) $ $(F_1<F_2<\ldots)$ and a unique sequence $(E_k)_{k=1}^\infty$ of successive members in $MAX(\mathcal{S}_\zeta[\mathcal{S}_\xi])$ $(E_1<E_2<\ldots)$  such that $$M=\bigcup_{k=1}^\infty E_k=\bigcup_{k=1}^\infty F_k.$$
\end{lemma}

\begin{proof} In order to see that each $M$ admits such a $\mathcal{S}_\zeta[\mathcal{S}_\xi]$ decomposition, it is sufficient to note that there exists a (necessarily unique) initial segment of $M$ which is a maximal member of $\mathcal{S}_\zeta[\mathcal{S}_\xi]$, which we denote by   $M|_{\xi,\zeta,1}$.   We then note if $$E_1=M|_{\xi,\zeta,1}, \ \  E_2= (M\setminus E_1)|_{\xi,\zeta,1},  \  \ldots,   \ \ E_{n+1}=(M\setminus \cup_{k=1}^n E_k)|_{\xi,\zeta,1},$$ then $M=\cup_{k=1}^\infty E_k$ is the unique decomposition of $M$ into successive, maximal member of $\mathcal{S}_\zeta[\mathcal{S}_\xi]$.

Let us explain how to deduce the existence of $M|_{\xi,\zeta,1}$.    We let \[A=\{m\in\nn: (M(1), \ldots, M(m))\in \mathcal{S}_\zeta[\mathcal{S}_\xi]\}.\]   Since $\mathcal{S}_\zeta[\mathcal{S}_\xi]$ contains all singletons, $1\in A\neq \varnothing$.    Since $\mathcal{S}_\zeta[\mathcal{S}_\xi]$ is hereditary, $A$ is an interval. Since $\mathcal{S}_\zeta[\mathcal{S}_\xi]$ is compact,   the set $A$ is finite.   Indeed, if $A$ were infinite, then it would be the case that $A=\nn$, which means $$F_m:=(M(1), M(2), \ldots, M(m))\in \mathcal{S}_\zeta[\mathcal{S}_\xi],$$ for all $m\in\nn$.    
But  the sequence  $(F_m)_{m=1}^\infty$ converges to $M$ and so $M\in\mathcal{S}_\zeta[\mathcal{S}_\xi]$,  contradicting the fact that $\mathcal{S}_\zeta[\mathcal{S}_\xi]$ consists only of finite sets.  Therefore the set $A$ is finite and non-empty so    it has a maximum, call it $m$.

By definition of $A$ and maximality of $m$, $$E_1:=(M(1), \ldots, M(m))\in \mathcal{S}_\zeta[\mathcal{S}_\xi] \ \ \hbox{and}  \ \ E_1\smallfrown (M(m+1)) =(M(1), \ldots, M(m+1))\notin \mathcal{S}_\zeta[\mathcal{S}_\xi].$$  
It remains to show that $E_1$ is a maximal member of $\mathcal{S}_\zeta[\mathcal{S}_\xi]$.  Here we recall that maximality with respect to inclusion and initial segment ordering are equivalent. If it were not maximal, then it would be a proper initial segment of some other member of $ \mathcal{S}_\zeta[\mathcal{S}_\xi]$, and since $\mathcal{S}_\zeta[\mathcal{S}_\xi]$ is hereditary, $E_1\smallfrown (p)\in \mathcal{S}_\zeta[\mathcal{S}_\xi]$ for some $E_1<p$.   By by Lemma \ref{schreier}$(iii)$, this would imply that $$E_1\smallfrown(M(m+1))=(M(1), \ldots, M(m+1))\in \mathcal{S}_\zeta[\mathcal{S}_\xi],$$ contradicting the maximality of $m$.   This shows that $E_1$ is maximal in $\mathcal{S}_{\zeta}[\mathcal{S}_\xi]$.

Note also that non-empty elements of $\mathcal{S}_\zeta[\mathcal{S}_\xi]$  admit a canonical representation. For $E\in [\nn]^{<\infty}$  we can write $E=\cup_{n=1}^m F_n$ with $F_n\in \mathcal{S}_\xi$  and $F_n\in MAX(\mathcal{S}_\xi)$ for $1\leqslant n<m$. If $E\in \mathcal{S}_\zeta[\mathcal{S}_\xi]$  then $(\min E_n)_{n=1}^m\in \mathcal{S}_\zeta$.  The existence of this representation is established similarly to the previous paragraphs. If $F\in[\nn]^{<\omega}$, we let $E_1$ be the maximal initial segment of $F$ which is in $\mathcal{S}_\zeta[\mathcal{S}_\xi]$. Then either $E_1=F$, in which case $F\in \mathcal{S}_\zeta[\mathcal{S}_\xi]$ may be non-maximal in $\mathcal{S}_\zeta[\mathcal{S}_\xi]$, or $E_1\prec F$ and $E_1$ is maximal in $\mathcal{S}_\zeta[\mathcal{S}_\xi]$, with the same proof as in the last paragraphs.  In the latter case, we let $E_2$ denote the maximal initial segment of $F\setminus E_1$ which is a member of $\mathcal{S}_\zeta[\mathcal{S}_\xi]$, etc.   We arrive at the advertised decomposition where each set $E_k$, except possibly the last, is maximal in $\mathcal{S}_\zeta[\mathcal{S}_\xi]$. 
\end{proof}

\begin{rem} \label{notation} The sequence $(F_k)_{k=1}^\infty$ given by Lemma \ref{decomposition}  is called the  $\mathcal{S}_\xi$-{\textit{decomposition of}} $M$ and the sequence $(E_k)_{k=1}^\infty$ is called the  $\mathcal{S}_\zeta[\mathcal{S}_\xi]$-{\textit{decomposition of}} $M.$ We let 
$$M\vert_{\xi,0}=\varnothing, \ \  M\vert_{\xi,n}=\bigcup_{k=1}^nF_k, \ \   M\vert_{\xi,\zeta,0}=\varnothing \ \ \hbox{and} \ \  M\vert_{\xi,\zeta,n}=\bigcup_{k=1}^nE_k.$$
We notice that the case of the $\mathcal{S}_\xi$-decomposition of an infinite subset  $M\subset \nn$ is proven in \cite[Corollary 3.2]{Gasp}. 
\end{rem}

\begin{xrem} It follows easily from Lemma \ref{schreier} that for $E\in [\nn]^{<\omega},$ we have $E\in MAX(\mathcal{S}_\zeta[\mathcal{S}_\xi])$ if, and only if, there exists a unique finite sequence $(F_r)_{r=1}^s$ of $MAX(\mathcal{S}_\xi)$ which satisfies $E=\bigcup_{1\leq r\leq s}F_r,$
$F_1<...<F_s$ and $(\min F_r)_{r=1}^s\in MAX(\mathcal{S}_\zeta).$ 
 
Observe that the convolution sets $\mathcal{S}_\zeta[\mathcal{S}_\xi]$ satisfy the following identity for all countable $\zeta, \xi$:  \begin{align*}\mathcal{S}_{\zeta+1}[\mathcal{S}_\xi] & = (\mathcal{S}_1[\mathcal{S}_\zeta])[\mathcal{S}_\xi]= \mathcal{S}_1[\mathcal{S}_\zeta[\mathcal{S}_\xi]] \\ & = \{\varnothing\}\cup \Bigl\{\bigcup_{n=1}^m E_n: E_1<\ldots <E_m, \varnothing\neq E_n\in \mathcal{S}_\zeta[\mathcal{S}_\xi], m\leqslant \min E_1\Bigr\}.\end{align*}

Finally,  note that members of $\mathcal{S}_1[\mathcal{S}_\zeta[\mathcal{S}_\xi]]$ admit a canonical decomposition. 
\end{xrem}

\section{On   Szlenk index $Sz(X)$}\label{szlenk index}

The Szlenk index is the fragmentation index of the dual ball $B_{X^*}$
of a  Banach space $X$ with respect to the the weak*-topology and the metric induced by the norm. The concept of fragmentation has been introduced in \cite{JR}.

For  $\varepsilon>0$ and  $K\subset X^*$ we let $s_\varepsilon(K)$ denote the set of $x^*\in K$ such that for any weak*-neighborhood $V$ of $x^*,$ $$\text{diam}(V\cap K)=\sup_{u^*,v^*\in K}\Vert u^*-v^*\Vert>\varepsilon.$$

We define $s_\varepsilon^{(0)}(K)=K$, 
$s_\varepsilon^{(\alpha+1)}(K)=s_\varepsilon(s_\varepsilon^{(\alpha)}(K))$, and if $\alpha$ is a limit ordinal
$s_\varepsilon^{(\alpha)}(K)=\bigcap_{\beta<\alpha}s_\varepsilon^{(\beta)}(K).$

We let $Sz(K,\varepsilon)$ denote the minimum $\alpha$ such that $s_\varepsilon^{(\alpha)}(K)=\varnothing,$ if such a $\alpha$ exists, and otherwise we write  $Sz(K,\varepsilon)=\omega_1.$ We let $Sz(K)=\sup_{\varepsilon>0}Sz(K,\varepsilon)$ if $Sz(K,\varepsilon)$ is an ordinal for all $\varepsilon>0,$ and we write $Sz(K)=\omega_1$ if $Sz(K,\varepsilon) =\omega_1$ for some $\varepsilon>0.$ We denote by $Sz(X,\varepsilon)= Sz(B_{X^*},\varepsilon)$ the \emph{$\varepsilon$-Szlenk index of $X$} 
 and by $Sz(X)=Sz(B_{X ^*})$ the
\emph{Szlenk index of $X$}.

The Szlenk index can be characterized as the  complexity which is required to obtain arbitrarily small convex (or $\ell_1^+$) combinations of the branches of weakly null trees \cite[Definition 2.7.]{AJO} in the unit ball of an Asplund space .

Precisely, for $0<\varepsilon<1,$ let
$$\mathcal{F}_\varepsilon=\{(x_i)_{i=1}^n:n\in\nn, x_1,...,x_n\in S_X \text{ and for each } x\in \text{co}\{x_1,...,x_n\},\ \Vert x\Vert\geqslant\varepsilon\}
$$

The weak derivative $\mathcal{F}_\varepsilon^{(1)}$ of $\mathcal{F}_\varepsilon$ consists of all the elements $(x_i)_{i=1}^n\in \mathcal{F}_\varepsilon$ for which there exists a weakly null sequence $(y_k)_{ k\geqslant1}$ of elements of $S_X$ such that $(x_1,...,x_n,y_k)\in  
\mathcal{F}_\varepsilon$ for each $k\geqslant1.$

We define by induction higher derivatives  $\mathcal{F}_\varepsilon^{(\alpha)}$  for $\alpha<\omega_1$,  by \  $\mathcal{F}_\varepsilon^{(0)}=\mathcal{F}_\varepsilon$, 
$\mathcal{F}_\varepsilon^{(\alpha)}
=(\mathcal{F}_\varepsilon^{(\beta)})^{(1)}\text{ if }\alpha=\beta+1$,  and 
$\mathcal{F}_\varepsilon^{(\alpha)}
=\bigcap_{\beta<\alpha}\mathcal{F}_\varepsilon^{(\beta)}$ if $\alpha$ is a limit ordinal.

We now define the $\ell_1^+$-$index$ of $X.$

$$
I_w(\mathcal{F}_\varepsilon)=\begin{cases}\min\{\alpha<\omega_1:\mathcal{F}_\varepsilon^{(\alpha)}=\emptyset\}\text{ if there exists $\alpha<\omega_1$ with $\mathcal{F}_\varepsilon^{(\alpha)}=\emptyset$}\\
\omega_1 \text{ othewise}
\end{cases}
$$
and $I_w(X)=\sup_{\varepsilon>0}I_w(\mathcal{F}_\varepsilon).$
The ordinal $I_w(X)$ is called the $\ell_1^+$-$index$ of $X.$

We recall the  main properties of the index that we will use later.

\begin{theorem}\label{Sz=Iw} Let $X$ be an infinite dimensional separable Banach space  not containing $\ell_1.$ Then,

i) $I_w(X)=Sz(X), $ \cite[Theorem 4.2.]{AJO}.

ii) If $X$ has a separable dual, then $I_w(X)=Sz(X)=\omega^\alpha$ for some $\alpha<\omega_1$ \cite[Theorem 3.2.]{AJO}.

iii) $Sz(X)< \omega_1$ if, and only if $X^*$ is separable \cite[Theorem 1]{Lancien}.
\end{theorem}

The following proposition is an immediate consequence of \cite[Proposition 5]{OSZ}. 

\begin{proposition} \label{OdellSZ}  Suppose that $I_w(X)> \omega^{\alpha+1}$  for a Banach space $X$ and $\alpha<\omega_1$. Then there exist $\varepsilon>0$ and a collection  $\mathbf{u}=(u_E)_{ E\in \mathcal{S}_{\alpha+1}\setminus\{\varnothing\}}$ in $B_X$ such that:
\begin{enumerate}[(a)]\item for any $E\in \mathcal{S}_{\alpha+1}\setminus MAX(\mathcal{S}_{\alpha+1})$, the sequence $(u_{E\smallfrown(n)})_{E<n}$ is  weakly null,  
\item for each $E\in \mathcal{S}_{\alpha+1},$  $\varepsilon\leqslant\inf\{\Vert x\Vert : x\in \mathrm{co}(u_F: \varnothing \prec F\preceq E)
\}$
    \end{enumerate}

\end{proposition}

Finally, we will also need the following result on the Szlenk index of the projective tensor product of spaces $C(K)$ spaces obtained in \cite[Theorem 1.1]{CGS1}.

\begin{theorem} \label{JFA} Let $K$, $L$ be compact Hausdorff topological spaces. Then
$$Sz(C(K) \tim_\pi C(L))= \hbox{max} \{Sz(C(K), Sz(C(L)\}.$$
\end{theorem}

\section{On families of probability measures on $\mathcal{S}_\xi$} \label{sufficient}

 In this section, we define the notions of probability block, $\xi$-sufficient, and $\xi$-regulatory probability blocks. The key property here is to make precise the idea that if a function on a tree has a sufficiently large set of averages with respect to special convex combinations of specified complexity, then it has a sufficiently large set on which it is pointwise large. 

Let us begin with a simple analogy. Let $S$ be an arbitrary infinite set and let $f:S\to [0,1]$ be an arbitrary function. Then for any $\ee>0$, $|\{x\in S:f(x)>\varepsilon\}|=m_\ee<\infty$, it must follow that
$$\inf \left\{\frac{1}{|T|}\sum_{x\in T}f(x):T\subset S, 0<|T|<\infty\right\}=0. 
$$

By contraposition, if \[\inf\left \{\frac{1}{|T|}\sum_{x\in T}f(x):T\subset S, 0<|T|<\infty\right\}=\ee>0,\] then for any $0<\ee_1<\ee$ and $m\in\nn$, there must exist $T\subset S$ with $|T|=m$ and $f(x)\geqslant \ee_1$ for all $x\in T$. 

As another example, let $T$ be a well-founded tree such that $\sup_{t\in T}|t|=\infty$ and let $f:\Pi\to [0,1]$ be a function, where $\Pi=\{(s,t)\in T\times MAX(T):s\preceq t\}$.   Suppose that \[\frac{1}{|t|}\sum_{s\preceq t} f(s,t)=\ee>0.\]   Then for any $0<\ee_1<\ee$ and $m\in\nn$, there exist $t\in T$ and a subset $R$ of $\{s\in T:s\preceq t\}$ such that $|R|=m$ and $f(s,t)\geqslant \ee_1$ for all $s\in R$.  From this it follows that for any tree $S$ with $\text{rank}(S)=\omega$ and $0<\ee_1<\ee$, there exists a monotone map $d:S\to T$ and a map $e:MAX(S)\to MAX(T)$ such that \begin{enumerate}[(i)]\item $d(s)\preceq e(s)$ for any $s\in MAX(S)$, and \item $f(d(s),e(t))\geqslant \ee_1$ for all $s\in S$ and $t\in MAX(S)$ with $s\preceq t$. \end{enumerate}

Next, let us consider the complications of trying to extend this result to trees of rank $\omega^2$.   Let $X_1$ denote Schreier's space, which is the completion of $c_{00}$ with respect the norm of which is given by \[ \Bigl\|\sum_{n=1}^\infty a_ne_n\Bigr\|=\sup_{E\in \mathcal{S}_1} \sum_{n\in E}|a_n|=\sup_{E\in\mathcal{S}_1} \sum_{n\in E} \Bigl|\Bigl\langle e^*_n,\sum_{j=1}^\infty a_je_j\Bigr\rangle\Bigr|.\] Let $T=\mathcal{S}_2\setminus\{\varnothing\}$. For each $F\in MAX(T)$, let $x^*_F\in S_{X^*_1}$ be a norming functional for $\sum_{n\in F}e_n$ and define $f(E,F)=\text{Re\ }x^*_F(e_{\max E})$ for $E\in T$ with $E\preceq F$. Then for any $F\in MAX(T)$, \[\frac{1}{|F|}\sum_{E\preceq F}f(E,F) =\Bigl\|\frac{1}{|F|}\sum_{n\in F}e_n\|\geqslant \frac{1}{|F|}\sum_{F\cap (|F|/2,\infty)} 1 \geqslant 1/2.\] However, suppose that $S$ is a tree with rank $\omega^2$, $d:S\to T$ is a monotone map, and $e:MAX(S)\to MAX(T)$ is a map such that $d(s)\preceq e(s)$ for any $s\in MAX(S)$.   Then for any $n\in\nn$, we would be able to find $F\in MAX(S)$ and a subset $G$ of  $\{\max d(E):E\preceq F\}$ such that $G=\cup_{i=1}^n G_i$ for some $n\leqslant G_1<\ldots <G_i$, $G_1\neq \varnothing$, $|G_{i+1}|>2\max G_i$ for all $1\leqslant i<n$.   For $E\preceq F$, if $\max d(E)\in G$, let $w_E=\frac{1}{n|G_i|}$, where $\max d(E)\in G_i$, and otherwise let $w_E=\varnothing$.   Then 

\begin{align*} \sum_{E\preceq F} w_E f(d(E),e(F)) & = \sum_{E\preceq F} w_E\text{Re\ }x^*_{e(F)}(e_{\max d(E)})  = \text{Re }\Bigl\langle x^*_{e(F)},\frac{1}{n}\sum_{i=1}^n \frac{1}{|G_i|}\sum_{j\in G_i}e_j\Bigr\rangle \\ & \leqslant \Bigl\|\frac{1}{n}\sum_{i=1}^n \frac{1}{|G_i|}\sum_{j\in G_i}e_j\Bigr\|\leqslant 2/n.\end{align*} For the last inequality, we have used the fact that for any $E\in\mathcal{S}_1$, either $E\cap G_i=\varnothing$ for all $1\leqslant i\leqslant n$, in which case \[\sum_{k\in E} \Bigl|\Bigl\langle e^*_k,\frac{1}{n}\sum_{i=1}^n \frac{1}{|G_i|}\sum_{j\in G_i}e_j\Bigr\rangle\Bigr|=0,\] or there exists a minimal $i_0$ with $1\leqslant i_0\leqslant n$ such that $E\cap G_{i_0}\neq\varnothing$. In this case, since $E\in\mathcal{S}_1$, \[|E|\leqslant \min E\leqslant \max G_{i_0} < |G_{i_0+1}|/2 < |G_{i_0+2}|/2^2<\ldots < |G_n|/2^{n-i_0}.\] Then \begin{align*} \sum_{k\in E} \Bigl|\Bigl\langle e_k^*, \frac{1}{n}\sum_{i=1}^n \frac{1}{|G_i|}\sum_{j\in G_i}e_j\Bigr\rangle \Bigr| & \leqslant \frac{1}{n}\sum_{i=i_0}^n \frac{|E\cap G_i|}{|G_i|} \leqslant \frac{1}{n}\sum_{i=0}^{n-i_0}\frac{1}{2^i}\leqslant 2/n.\end{align*}

Since $n$ was arbitrary, it is not possible that there exists any $\ee_1>0$ such that $f(d(E), e(F))\geqslant \ee_1$ for all $E,F\in S$ with $E\preceq F\in MAX(S)$. In fact, this phenomenon is the purpose of Schreier's construction of his example of a space failing the weak Banach-Saks property. It was Argyros, Mercourakis, and Tsarpalias who introduced the repeated averages hierarchy to provide a transfinite quantification of the complexity of norm null convex blockings of a given weakly null sequence, as guaranteed by Mazur's lemma. The properties of the repeated averages hierarchy are essentially those we present in this section, although we follow the terminology from \cite{CN}. 

We let $\mathscr{P}$ denote the set of probability measures on $\nn$ such that for each $\mathbb{P}\in \mathscr{P}$, $\mathbb{P}(\{i\})$ is defined. For $\mathbb{P}\in \mathscr{P}$, we abuse notation and let $\mathbb{P}(i)=\mathbb{P}(\{i\})$ for $i\in\nn$.  We let $\supp(\mathbb{P})=\{i\in\nn:\mathbb{P}(i)>0\}$, and refer to this set as the \emph{support} of $\mathbb{P}$.  We say $\mathcal{P}\subset [\nn]^{<\omega}$ is \emph{nice} provided it is compact, hereditary, spreading, contains all singletons, and has the property that for each $E\in \mathcal{P}$, then there exists some $m\in \nn\cap (\max E,\infty)$ such that $E\smallfrown (m)\in \mathcal{P}$ if and only if for every $m\in \nn\cap (\max E,\infty)$, $E\smallfrown (m)\in \mathcal{P}$.  It is straightforward to see that the Schreier families are all nice. We note that for any $M\in[\nn]$, there exists a unique non-empty, finite initial segment of $M$ which is a member of $MAX(\mathcal{P})$. The proof is the same as in the case of the Schreier families. In this section and this section only, we use the notation $M|_\mathcal{P}$ to denote this initial segment. We will not need this notation in subsequent sections, since our only case of interest will be the Schreier families and the repeated averages hierarchy.  

A \emph{probability block} is a pair $(\mathfrak{P}, \mathcal{P})$, where \[\mathfrak{P}=\{\mathbb{P}_{M,n}:M\in[\nn], n\in\nn\}\subset \mathscr{P}\] and $\mathcal{P}$ is a nice family such that 

\begin{enumerate}[(i)]\item for any $M\in[\nn]$ and $r\in \nn$, if $N=M\setminus \cup_{i=1}^{r-1}\supp(\mathbb{P}_{M,i})$, then $\mathbb{P}_{N,1}=\mathbb{P}_{M,r}$, and 

\item for each $M\in[\nn]$, $\supp(\mathbb{P}_{M,1})=M|_{\mathcal{P},1}$

\end{enumerate}

For a countable ordinal $\xi$, we say a probability block $(\mathfrak{P}, \mathcal{P})$ is $\xi$-\emph{sufficient} if for any $\ee>0$, any regular family $\mathcal{G}$ with $CB(\mathcal{G})\leqslant \omega^\xi$, and $L\in[\nn]$, there exists $M\in[L]$ such that \[\sup \{\mathbb{P}_{N,1}(E):N\in [M], E\in \mathcal{G}\}\leqslant \ee.\] 

Next, for a probability block $(\mathfrak{P}, \mathcal{P})$, a function $f:\nn\times MAX(\mathcal{P})\to \rr$, and $\ee\in \rr$,  we let \[\mathfrak{E}(f,\mathfrak{P}, \mathcal{P}, \ee)=\Bigl\{M\in[\nn]:\sum_{i=1}^\infty \mathbb{P}_{M,1}(i) f(i,\supp(\mathbb{P}_{M,1}))\geqslant \ee\Bigr\}.\] For a probability block $(\mathfrak{P}, \mathcal{P})$, a function $f:\nn\times MAX(\mathcal{P})\to \rr$, $M\in[\nn]$, and $\ee\in \rr$, we let \[\mathfrak{G}(f,\mathcal{P},M,\ee) = \{E\in[\nn]^{<\omega}:(\exists F\in MAX(\mathcal{P})\cap [M]^{<\omega}:(\forall i \in E)(f(i,F)\geqslant \ee)\}.\]

We say $\mathfrak{G}(f,\mathcal{P},M,\ee)$ is $\xi$-\emph{full} provided that there exists $L\in[\nn]$ such that $\mathcal{S}_\xi(L)\subset \mathfrak{G}(f,\mathcal{P}, M,\ee)$.  We say that the probability block $(\mathfrak{P}, \mathcal{P})$ is $\xi$-\emph{regulatory} provided that for any bounded function $f:\nn\times MAX(\mathcal{P})\to \rr$, any $\delta,\ee\in \rr$ with $\delta<\ee$, if there exists $L\in[\nn]$ such that $[L]\subset \mathfrak{E}(f,\mathfrak{P}, \mathcal{P}, \ee)$, then $\mathfrak{G}(f,\mathcal{P}, L,\delta)$ is $\xi$-full.  We have the following. 

\begin{lemma}\cite[Lemma 4.5]{CN} Let $\xi$ be a countable ordinal and let $(\mathfrak{P}, \mathcal{P})$ be a probability block. If $(\mathfrak{P}, \mathcal{P})$ is $\xi$-sufficient, then it is $\xi$-regulatory.   
\end{lemma} \label{resultado1}

\section{Some convenient weights on Schreier families}\label{schreier families}

In this section  we introduce the weight $p^\xi$ and  the weight $q^{\xi,\zeta}.$  The weight $p^\xi$ is defined on  $\mathcal{S}_\xi\setminus\{\varnothing\}$ and the weight $q^{\xi,\zeta}$ is defined on  $\mathcal{S}_\zeta[\mathcal{S}_\xi]\setminus
\{\varnothing\}.$ Here we will only highlight the permanence and convexity properties  of these weights (Proposition \ref{perm2}). First we will establish the Lemma \ref{perm1}.

\subsection{The weight $p^\xi$ on $\mathcal{S}_\xi$}
For each ordinal $\xi<\omega_1$ we define  the weight $p^\xi$ on $\mathcal{S}_\xi\setminus\{\varnothing\}$ and after we extend  $p^\xi$ to $[\nn]^{<\omega}. $ We proceed by induction on $\xi.$

   We let $p^0_E=1$ for all $  E\in\mathcal{S}_0\setminus\{\varnothing\}$. Next, assume that $p^\xi_E$ has been defined for some $\xi<\omega_1$ and every $  E\in\mathcal{S}_\xi\setminus\{\varnothing\} $. For $ E\in\mathcal{S}_{\xi+1}\setminus\{\varnothing\}$, we can uniquely  write  $E=\cup_{n=1}^l F_n$, $F_1<\ldots<F_l$, where $F_n\in MAX(\mathcal{S}_{\xi})$ for each $1\leqslant n<l$ and $ F_l\in \mathcal{S}_{\xi}.$   
We define $$p^{\xi+1}_E= p^\xi_{F_l}/\min E.$$

 Assume $\xi<\omega_1$ is a limit ordinal and $p^\nu$ has been defined on $\ \mathcal{S}_\nu\setminus\{\varnothing\}$ for each $\nu<\xi.$  
    Let $\xi_n\uparrow \xi$ be the characteristic sequence of $\xi.$  For $E\in \mathcal{S}_\xi\setminus\{\varnothing\}$ we define $$p^\xi_E=p^{\xi_{\min E}+1}_{E}.$$

In order to define $p^\xi$  on $[\nn]^{<\omega}\setminus\{\varnothing\}$
 we use of Lemma~\ref{decomposition}. Let $(E_i)_{i=1}^n$ be the 
$\mathcal{S}_\xi$-decomposition of  $E\in[\nn]^{<\omega}\setminus\{\varnothing\}
,$   
then we let $$p^\xi_E = p^\xi_{E_n}.$$

\begin{lemma}
   For each  $E\in MAX(\mathcal{S}_\xi)$  we have  $1=\sum_{\varnothing \prec F\preceq  E} p^\xi_F.$  
\label{perm1}
\end{lemma}
\begin{proof}  It is sufficient to prove the result in the case $n=1$.  We prove this result by induction on $\xi$.  For the case $\xi=0$, $E $ is a singleton and $p^0_{E }=1$, so the sum is of a single term equal to $1$. For the successor case, 
let $E\in MAX(\mathcal{S}_{\xi+1})$ and $(F_k)_{k=1}^l$ be the  $ \mathcal{S}_{\xi}$-decomposition of $E.$ Then $l=\min E=\min F_1.$ 
Then by the inductive hypothesis,

$$\sum_{F\preceq E} p^{\xi+1}_F=\sum_{k=1}^l\sum_{E\vert_{\xi,k-1}\prec F\preceq E\vert_{\xi,k}}p^{\xi+1}_F=\sum_{k=1}^l\sum_{\varnothing\prec G\preceq F_k}p^{\xi+1}_{E\vert_{\xi,k-1}\cup G}=\sum_{k=1}^l\frac{1}{\min E}\sum_{\varnothing\prec G\preceq F_k}p^\xi_G=1.
$$

 If $\xi$ is a limit ordinal, then since $E\in MAX(\mathcal{S}_\xi)$, $E\in MAX(\mathcal{S}_{\xi_{\min E}+1})$, where $\xi_m\uparrow \xi$ is the characteristic sequence of $\xi$.    Then \[\sum_{F\preceq E}p^\xi_F = \sum_{F\preceq E} p^{\xi_{\min E}+1}_F = 1.\] 

\end{proof}

We now relate the weights $p^\xi$ to the notion of $\xi$-sufficient probability block. For $M\in[\nn]$ and $n\in\nn$, we let $\mathbb{S}^\xi_{M,n}$ be the probability measure supported on $M|_{\xi,n}$ given by \[\mathbb{S}^\xi_{M,n}(i) = \left\{\begin{array}{ll} p^\xi_{[1,i]\cap M|_{\xi,n}} & : i\in M|_{\xi,n} \\ 0 & : i\in\nn\setminus M|_{\xi,n}. \end{array}\right.\] We recall the following. We let \[\mathfrak{S}_\xi=\{\mathbb{S}^\xi_{M,n}:M\in[\nn], n\in\nn\}.\]

\begin{lemma}[Corollary 4.9]\cite{CN} For $\xi<\omega_1$, $(\mathfrak{S}_\xi,\mathcal{S}_\xi)$ is $\xi$-sufficient.
\label{emit}
\end{lemma} \label{resultado2}

\subsection{The weight $q^{\xi,\zeta}$ on $\mathcal{S}_\zeta[\mathcal{S}_\xi]$}

Next, for $\xi,\zeta<\omega_1$, we define the weight $q^{\xi,\zeta}$ on $\mathcal{S}_\zeta[\mathcal{S}_\xi]$ and then we extend $q^{\xi,\zeta}$ to $[\nn]^{<\omega}\setminus\{\varnothing\}$. The definition is by induction on $\zeta$ for $\xi$ held fixed.

       We let $q^{\xi,0}_E=1$ for all $ E\in\mathcal{S}_\xi\setminus\{\varnothing\}$.    Assume that $q^{\xi, \zeta}$ has been defined on $\mathcal{S}_\zeta[\mathcal{S}_\xi]\setminus\{\varnothing\}$ for some $\zeta<\omega_1.$ 
              Fix $ E\in \mathcal{S}_{\zeta+1}[\mathcal{S}_\xi]\setminus\{\varnothing\}$.    We recall that $\mathcal{S}_{\zeta+1}[\mathcal{S}_\xi]=\mathcal{S}_1[\mathcal{S}_{\zeta}[\mathcal{S}_\xi]].$

   We then express $E=\cup_{n=1}^l F_n$, $F_1<\ldots <F_l$, $F_n\in MAX(\mathcal{S}_\zeta[\mathcal{S}_\xi])$ for $1\leqslant n<l$, and $F_l\in \mathcal{S}_\zeta[\mathcal{S}_\xi]$.     We then define \[q^{\xi, \zeta+1}_E= q^{\xi,\zeta}_{F_l}/\sqrt{\min E}.\]

    Assume $\zeta<\omega_1$ is a limit ordinal and $q^{\xi,\nu}$ has been defined for each $\nu<\zeta$ on $\mathcal{S}_\nu[\mathcal{S}_\xi].$
    Let $\zeta_n\uparrow \zeta$ be the characteristic sequence of $\zeta$ and let $E\in \mathcal{S}_\zeta[ \mathcal{S}_\xi]\setminus\{\varnothing\}$ then $E\in\mathcal{S}_{\min E+1}[ \mathcal{S}_\xi]$
and we define
 $$q^{\xi, \zeta}_E = q^{\xi,\zeta_{\min E+1}}_{E}.$$

Once $q^{\xi, \zeta}$ define on $\mathcal{S}_\zeta[ \mathcal{S}_\xi]\setminus\{\varnothing\}$ we extend  $q^{\xi, \zeta}$ to $[\nn]^{<\omega}\setminus\{\varnothing\}.$ Fix 
$E\in [\nn]^{<\omega}\setminus\{\varnothing\},$  we then express $E=\bigcup_{n=1}^lF_n$ where $F_1<\ldots<F_l,$ $F_n\in MAX(\mathcal{S}_\zeta[ \mathcal{S}_\xi])$ for $1\leqslant n<l$ and $F_l\in \mathcal{S}_\zeta[ \mathcal{S}_\xi])$ we then define $$q^{\xi, \zeta}_E=q^{\xi, \zeta}_{F_l}.$$

\begin{proposition}
  If $\xi,\zeta<\omega_1, $    $E\in MAX(\mathcal{S}_\zeta[\mathcal{S}_\xi])$  and $E=\cup_{r=1}^s F_r$, where $F_1<\ldots <F_s$, $F_r\in MAX(\mathcal{S}_\xi)$, $1\leqslant r\leqslant s$, then \[1= \sum_{r=1}^s \Bigl[\sum_{ \cup_{t=1}^{r-1}F_t \prec F\preceq \cup_{t=1}^r F_t} q^{\xi,\zeta}_Fp^\xi_F\Bigr]^2.\] 
\label{perm2}
\end{proposition}

\begin{proof}

 We work by induction on $\zeta$. The $\zeta=0$ case is Lemma \ref{perm1}.   We next prove the successor case.    Let $E\in MAX(\mathcal{S}_{\zeta+1}[\mathcal{S}_\xi])$, and write $E=\bigcup_{r=1}^sF_r $  where $F_1, \ldots, F_s$ are successive, maximal members of $\mathcal{S}_\xi$.  We note that necessarily $$(\min F_r)_{r=1}^s\in MAX(\mathcal{S}_{\zeta+1})=
MAX(\mathcal{S}_1[\mathcal{S}_\zeta]).$$ Therefore there exist $0=t_0<t_1<\ldots < t_j$ such that for each $1\leqslant m\leqslant j$, $$(\min F_r)_{r=t_{m-1}+1}^{t_m}\in MAX(\mathcal{S}_\zeta),$$ and such that $j= \min E=\min F_1$.     Therefore with $$H_m=\cup_{r=t_{m-1}+1}^{t_m} F_r,$$ it holds that $H_m\in MAX(\mathcal{S}_\zeta[\mathcal{S}_\xi])$ for each $1\leqslant m\leqslant j$. Note that $\min H_1=\min E=j$.  

  Define $$G_m=\cup_{t=1}^{t_{m-1}}F_t,$$ for $1\leqslant m\leqslant j$.   By the definition of $q^{\xi,\zeta}_F$, it holds that \[q^{\xi,\zeta+1}_{G_m\cup F}=q^{\xi,\zeta}_F/\sqrt{j}\] for each $1\leqslant m\leqslant j$ and $F\preceq H_m$.  
By the inductive hypothesis,  \begin{align*} \sum_{r=1}^s \Bigl[\sum_{\cup_{t=1}^{r-1} F_t \prec F\preceq \cup_{t=1}^r F_t} q^{\xi,\zeta+1}_Fp^\xi_F \Bigr]^2 = \sum_{m=1}^j \frac{1}{j}\sum_{r=t_{m-1}+1}^{t_m}\Bigl[\sum_{\varnothing\prec F\preceq H_m} q^{\xi,\zeta}_Fp^\xi_F  \Bigr]^2 = \sum_{m=1}^j \frac{1}{j} =1. \end{align*}

For the limit ordinal case, $E\in MAX(\mathcal{S}_\zeta[\mathcal{S}_\xi])$ implies that $E\in MAX(\mathcal{S}_{\zeta_{\min E}+1}[\mathcal{S}_\xi])$. If $E=\cup_{r=1}^s F_r,$ where $F_1, \ldots, F_r$ are successive, maximal members of $\mathcal{S}_\xi$, then $$q^{\xi,\zeta}_F=q^{\xi,\zeta_{\min E}+1}_F,$$ for each $F\preceq E$. By the inductive hypothesis, \[1=\sum_{r=1}^s \Bigl[\sum_{ \cup_{t=1}^{r-1}F_t \prec F\preceq \cup_{t=1}^r F_t} q^{\xi,\zeta_{\min E}+1}_Fp^\xi_F \Bigr]^2= \sum_{r=1}^s \Bigl[\sum_{ \cup_{t=1}^{r-1}F_t \prec F\preceq \cup_{t=1}^r F_t}q^{\xi,\zeta}_Fp^\xi_F \Bigr]^2.\]

\end{proof}

\begin{rem}\label{4}  It follows from the proof of Proposition \ref{perm2} that for any $E\in MAX(\mathcal{S}_{\zeta}[\mathcal{S}_\xi])$ and if $(E_r)_{r=1}^s$ is the $S_\xi$-decomposition of $E,$ then the function $q^{\xi,\zeta}$ is constant on on each interval $\{F : \cup_{r=1}^{i-1}E_r\prec F\preceq\cup_{r=1}^{i}E_r\}$
If we denote $a_i$ this common value, then $a_i$ are positive and $\sum_{i=1}^ma_i^2=1.$  
\end{rem}

\section{On weakly null sequences and weakly null collections in quotient spaces} \label{Quotient}

Before giving the definitions of properties ${S}_\xi$ and ${G}_{\xi,\zeta}$ in the next section, here we establish two   lifting propositions on weakly null sequences and weakly null collections in quotient spaces.

\begin{proposition} If $X$ is a Banach space not containing $\ell_1$,  $Y$ a subspace of  $X$, $Q:X\to X/Y$ the quotient map and  $(z_n)_{n=1}^\infty$ a weakly null sequence in $ B_{X/Y}$, then for any $L\in[\nn]$, there exist $M\in [L] $ and a weakly null sequence  $(x_n)_{n=1}^\infty$ in $3B_X$ such that $Qx_n=z_n$ for all $n\in M$.  
\label{legere}
\end{proposition}

\begin{proof} . Fix a sequence  $(u_n)_{n=1}^\infty$ in $(4/3)B_X$  such that $Qu_n=z_n$ for all $n\in\nn$.   Fix $L\in [\mathbb{N}]$. Since $X$ does not contain a copy of $\ell_1$,   there exists $M\in[L]$ such that the sequence $(u_n)_{n\in M}$ is weakly Cauchy.    Since $(z_n)_{n=1}^\infty$ is weakly null, we may choose finite sets $I_1<I_2<\ldots$, $I_n\subset M$, and positive numbers $(w_i)_{i\in \cup_{n=1}^\infty I_n}$ such that $$1=\sum_{i\in I_n}w_i \ \  \hbox{and} \ \ \|\sum_{i\in I_n} w_i z_i\|< 2^{-n}/3.$$   We may choose for each $n\in\nn$ some $v_n\in ( 2^{-n}/3) B_X$ such that $$Qv_n=\sum_{i\in I_n}w_iz_i.$$   Let $$x_n=u_n-\sum_{i\in I_n} w_iu_i + v_n,$$ for $n\in M$ and let $x_n=0$ for $n\in \nn\setminus M$.  Since $(u_n)_{n\in M}$ is weakly Cauchy,  the sequence 
$$\left(u_n-\sum_{i\in I_n} w_iu_i\right)_{n\in M}$$ 
is weakly null  and since $(v_n)_{n\in M}$ is norm null, the sequence $(x_n)_{n\in M}$ is weakly null.  Since $x_n=0$ for all $n\in \nn\setminus M$, $(x_n)_{n=1}^\infty $ is weakly null.    Note that if $n\in M$, \[Qx_n= Qu_n-\sum_{i\in I_n} w_iQx_i + Qv_n = z_n-\sum_{i\in I_n} w_iz_i +\sum_{i\in I_n}w_iz_i=z_n.\]   For each $n\in\nn$, $\|x_n\|=0$ if $n\in\nn\setminus M$, and for $n\in M$, \[\|x_n\|\leqslant \|u_n\|+\sum_{i\in I_n}w_i \|u_i\|+ \|v_n\| < 4/3+4/3+1/3=3.\]

\end{proof}

\begin{proposition} Let $X$ be a Banach space  not containing $\ell_1$,  $Y$ a  subspace of $X$, $Q:X\to X/Y$ the quotient map and let  $(u_E)_{E\in[\nn]^{<\omega}\setminus
\{\varnothing\}}$    be a weakly null collection in $B_{X/Y}$.   Then for any $L_0\in [\nn]$, there exist  a weakly null collection  $(x_E)_{ E\in[\nn]^{<\omega}\setminus\{\varnothing\}}$ in $ 3B_X$  and $L\in[L_0]$ such that for any $M\in[L]$, there exists $N\in[M]$ such that $Qx_{N|n}=u_{N|n}$ for all $n\in\nn$. 
\label{sara}
\end{proposition}

\begin{proof} We recall the convention that $(k)>\varnothing$ for all $k\in \nn$.   Let $E_1, E_2, \ldots$ be an enumeration of $[\nn]^{<\omega}.$   By Proposition \ref{legere}, for each $n\in\nn$ and $L\in[\nn]$, there exist $M\in[L]$ and a weakly null collection  $(x_{E_n\smallfrown(k)})_{k>E_n}$ in $3B_X$  such that $$Qx_{E_n\smallfrown (k)}=u_{E_n\smallfrown (k)},$$ for all $k\in M, k>E_n$. 

By Proposition \ref{legere}, we can recursively select $L_0\supset L_1\supset L_2\supset \ldots$ and weakly null sequences $(x_{E_n\smallfrown(k)})_{k> E_n}$ in $3B_X$ such that for all $n\in\nn,$     for all   $k\in L_n, E_n<k$ $$Qx_{E_n\smallfrown(k)}=u_{E_n\smallfrown(k)}.$$  This defines the entire collection $(x_E)_{\varnothing\neq E\in [\nn]^{<\omega}}$, which is a weakly null collection, since $(x_{E_n\smallfrown (k)})_{k>E_n}$ is weakly null for each $n\in\nn$.     Choose $l_1<l_2<\ldots$ such that  $l_n\in L_n, E_n<l_n$  and let $L=(l_1, l_2, \ldots)$.   Note that  $\{l_n,l_{n+1},\dots\} \subset L_n $  for all $n\in\nn$, from which it follows that for any $M\in[L]$, $k,m\in \nn$, there exist $n\in (m,\infty)\cap M\cap L_k$.       Fix $k_1\in \nn$ such that $\varnothing=E_{k_1}$ and fix $n_1\in M\cap L_{k_1}$. Note that \[Qx_{(n_1)}= Qx_{E_{k_1}\smallfrown (n_1)} = u_{E_{k_1}\smallfrown (n_1)} = u_{(n_1)},\] since $E_{k_1}<n_1\in L_{k_1}$.     Now assuming $n_1<\ldots < n_t$ have been chosen, fix $k_{t+1}\in \nn$ such that $(n_1, \ldots, n_t)=E_{k_{t+1}}$.  Fix $n_{t+1}\in (n_t, \infty) \cap M\cap L_{k_{t+1}}$.    Then \[Qx_{(n_1, \ldots, n_{t+1})}= Qx_{E_{k_{t+1}}\smallfrown (n_{t+1})} = u_{E_{k_{t+1}}\smallfrown (n_{t+1})}=u_{(n_1, \ldots, n_{t+1})},\] since  $n_{t+1}\in L_{k_{t+1}}$ and $E_{k_{t+1}}<n_{t+1}$. This completes the recursive choice  and we are done with $N=(n_1, n_2, \ldots)$.

\end{proof}

\section{The properties ${S}_\xi$ and ${G}_{\xi,\zeta}$ for weakly null collections in Banach spaces} \label{Properties G}

In this section we  define  two
 properties  ${S}_\xi$ and ${G}_{\xi,\zeta}$  for weakly null collections in Banach spaces. (Definitions \ref{D2} and \ref{D5}). It is also  convenient to introduce two ordinals $\mathbf{s}_{\xi}(A)$ and  $\mathbf{g}_{\xi,\zeta}(A)$  related respectively to these properties for any operator $A$ (Definitions \ref{D3} and \ref{D6}).  Moreover, we will emphasize
 two  classes of operators $\mathbf{S}_\xi$ and $\mathbf{G}_{\xi,\zeta}$  defined in terms of these properties respectively (Definitions \ref{D4} and \ref{D7}). 

\begin{definition} Let $\mathbf{u}=(u_E)_{ E\in[\nn]^{<\omega}\setminus\{\varnothing\}}$  be a collection of vectors in a Banach space $U.$  For   ordinals  $\zeta, \xi<\omega_1$, $F\in MAX(S_\xi)$ or $E\in MAX(S_\zeta[S_\xi])$ we define the $\ell_1^+$ combination $\mathbb{E}_{\xi,F}\mathbf{u}$ of $\mathbf{u}$ on $F$ and the $\ell_2^+$ combination $\mathbb{E}^{(2)}_{\xi,\zeta,E}\mathbf{u}$ of $\mathbf{u}$ on $E$ by

$$
\mathbb{E}_{\xi,F}\mathbf{u}=
\sum_{\varnothing\prec G \preceq F} p^\xi_G u_G \ \ \hbox{and} \ \  
\mathbb{E}^{(2)}_{\xi,\zeta,E}\mathbf{u}=
\sum_{\varnothing\prec G\preceq E} q_G^{\xi,\zeta}p^\xi_G u_G.
$$

For any  $M\in[\nn]$ and any $n\in \nn$ we define
$$\mathbb{E}_{\xi,M}\mathbf{u}(n)=\sum_{M\vert_{\xi,n-1}\prec E\preceq M\vert_{\xi,n}}
p^\xi_Eu_E \ \ \ \hbox{and} \ \ \
\mathbb{E}_{\xi,M}\mathbf{u}=(\mathbb{E}_{\xi,M}\mathbf{u}(n))_{n=1}^\infty,$$ 
and  also
$$\mathbb{E}^{(2)}_{\xi,\zeta,M}\mathbf{u}(n)=\sum_{M\vert_{\xi,\zeta,n-1}\prec E\preceq M\vert_{\xi,\zeta,n}}
q^{\xi,\zeta}_Ep^\xi_Eu_E \ \ \ \hbox{and} \ \ \ 
\mathbb{E}^{(2)}_{\xi,\zeta,M}\mathbf{u}=(\mathbb{E}^{(2)}_{\xi,\zeta,M}\mathbf{u}(n))_{n=1}^\infty$$
\end{definition}

\begin{rem}\label{lemasperm} 
It follows from  Lemma \ref{perm1} that 
$$\mathbb{E}_{\xi,M}\mathbf{u}(n)\in \mathrm{co}\{u_E:M\vert_{\xi,n-1}\prec E\preceq M\vert_{\xi,n}\}.$$

An easy induction argument on $\zeta$ using  Remark \ref{4}  yields that, for
 any $n\in\nn$, 
 $$\mathbb{E}^{(2)}_{\xi,\zeta,M}\mathbf{u}(n)\in \Bigl\{\sum_{m=1}^\infty a_m \mathbb{E}_{\xi,M}\mathbf{u}(m): (a_m)_{n=1}^\infty\in c_{00}, \sum_{m=1}^\infty |a_m|^2=1\Bigr\}.$$
\end{rem}

\begin{definition} \label{D2} Given an ordinal $\xi<\omega_1$,  a constant $\gamma\geqslant 0$,  Banach spaces $U,V$ and an operator 
$
A:U\longrightarrow V,$ we say that $A$ has  the $\gamma$-$S_\xi$ \emph{Property} provided that, for any  weakly null  collection $\mathbf{u}=(u_E)_{ E\in[\nn]^{<\omega}\setminus\{\varnothing\}}$ in $ B_U$  and any $L\in[\nn]$, there exists $M\in[L]$ such that $\|A\mathbb{E}_{\xi,M}\mathbf{u}\|_2^w \leqslant \gamma$ where  $A\mathbb{E}_{\xi,M}\mathbf{u}=(A\mathbb{E}_{\xi,M}\mathbf{u}(n))_{n=1}^\infty.$

We say the Banach space $U$ has the $\gamma$-$\mathcal{S}_\xi$ \emph{Property} if $I_U$ has it. We say $A$ (resp.$U$) has the $\mathcal{S}_\xi$ \emph{Property} if it has the $\gamma$-$\mathcal{S}_\xi$ \emph{Property} for some $\gamma\geqslant0.$

\end{definition}

\begin{definition} \label{D3}

For an operator $A:U\longrightarrow V$ we define
\[\mathbf{s}_{\xi}(A)=\begin{cases}\inf \gamma&\text{if }A\text{ has the } \gamma\text{-}{S}_{\xi}\text{ property for some }  0\leq\gamma.\\
\infty&\text{otherwise.}
\end{cases}
\]
and for a Banach space $U$ we define  $\mathbf{s}_{\xi}(U)=\mathbf{s}_{\xi}(I_U)$
\end{definition}

\begin{definition} \label{D4}
For Banach spaces $U,V$ we let $\mathbf{S}_\xi(U,V)$    denote the class of all operators $A:U\longrightarrow V$ which have property $S_\xi.$
 We let $\mathbf{S}_\xi$ denote the class $$\bigcup_{U,V\in \mathtt{Ban}}\mathbf{S}_\xi(U,V),$$ where $\mathtt{Ban}$ is the class of all Banach spaces over $\mathbb{K}.$
 \end{definition}
 \begin{rem}\label{perm3}\upshape    If $M\in[\nn]$ and if $\mathbf{u}=(u_E)_{ E\in[\nn]^{<\omega}\setminus\{\varnothing\}}$ is a collection in $U,$ it follows from Remark \ref{lemasperm} that   $\|A\mathbb{E}_{\xi,M}\mathbf{u}\|_2^w \leqslant \gamma$  implies $\|A\mathbb{E}^{(2)}_{\xi,\zeta, M} \mathbf{u}  (m)\|\leqslant \gamma$ for all $\zeta<\omega_1$ and $m\in\nn$.     One reason we isolate the property $S_\xi$ is to guarantee that the $\ell_2$ averaging in the definition of $\mathbb{E}^{(2)}_{\xi, \zeta, M}\mathbf{u}$ results in sequences that remain bounded. 

\end{rem}
\begin{definition} \label{D5} Given  ordinals $\xi, \zeta<\omega_1$,  a constant $\gamma\geqslant 0$,  Banach spaces $U,V$  and an operator $A:U\longrightarrow V$ we say that $A$ has the $\gamma$-${G}_{\xi,\zeta}$ \emph{Property} provided that, for any  weakly null  collection $\mathbf{u}=(u_E)_{E\in[\nn]^{<\infty}\setminus\{\varnothing\}}$ in $ B_U$  and any $L\in[\nn]$, there exists $M\in[L]$ such that  $\|A\mathbb{E}^{(2)}_{\xi,\zeta,M}\mathbf{u}\|_1^w \leqslant \gamma$ where $A\mathbb{E}^{(2)}_{\xi,\zeta,M}\mathbf{u}=(A\mathbb{E}^{(2)}_{\xi,\zeta,M}\mathbf{u}(n))_{n=1}^\infty.$

We say the Banach space $U$ has the $\gamma$-${G}_{\xi,\zeta}$ \emph{Property} if $I_U$ has the $\gamma$-${G}_{\xi,\zeta}$ \emph{Property} . We say $A$ (resp. $U$) has the ${G}_{\xi,\zeta}$ \emph{Property} if it has $\gamma$-${G}_{\xi,\zeta}$ \emph{Property}  for some $\gamma\geqslant0.$
\end{definition}

\begin{definition} \label{D6}
 For an operator $A:U\longrightarrow V$ we denote
$$\mathbf{g}_{\xi,\zeta}(A)=\begin{cases}\inf \gamma&\text{if }A \text{ has the } \gamma\text{-}{G}_{\xi,\zeta}\text{ property for some }  0\leq\gamma.\\
\infty&\text{otherwise}
\end{cases}
$$
and for a Banach space $U$ we define $\mathbf{g}_{\xi,\zeta}(U)=\mathbf{g}_{\xi,\zeta}(I_U).$
\end{definition} 
\begin{definition} \label{D7}
For Banach spaces $U,V$ we let $\mathbf{G}_{\xi,\zeta}(U,V)$ denote the class of all operators $A:U\longrightarrow V$ which have property $G_{\xi,\zeta}.$
 We let $\mathbf{G}_{\xi,\zeta}$ denote the class $$\bigcup_{U,V\in \mathtt{Ban}}\mathbf{G}_{\xi,\zeta}(U,V).$$ 
\end{definition}

We recall that an operator $A:U\to V$ is \emph{completely continuous} provided that it maps weakly null sequences to norm null sequences.  

\begin{rem}\upshape \label{cc operator} If $A:U\to V$ is  a completely continuous operator, then $A$ has the $S_\xi$-Property and $\mathbf{s}_\xi(A)=0.$  In particular, $\mathbf{S}_\xi$ contains all compact operators, and in particular all finite rank operators. 
\end{rem}

\begin{proof}

Indeed, fix $\gamma>0.$ Let $\mathbf{u}=(u_E)_{ E\in[\mathbb{N}]^{<\omega}\setminus\{\varnothing\}}$ be a weakly null collection in $B_U$ and $L\in [\nn].$ Since $A$ sends weakly null sequences to norm null sequences,  we can select by induction a sequence $M=(l_1,l_2,\ldots)\in [L]$ such that $$\sum_{n=1}^\infty\Vert Au_{(l_1,...,l_n)}\Vert <\gamma.$$

It follows from Remark \ref{lemasperm} that, for all $m\in \mathbb{N},$
  $$\Vert A\mathbb{E}_{\xi,M}\mathbf{u}(m)\Vert\leqslant\sum_{M\vert_{\xi,m-1}\prec E\preceq M\vert_{\xi,m}}\Vert Au_E\Vert,$$ and then
$$\Vert A\mathbb{E}_{\xi,M}\mathbf{u}\Vert_2^w
\leqslant\sum_{m=1}^\infty
\sum_{M\vert_{\xi,m-1}\prec E\preceq M\vert_{\xi,m}}\Vert Au_E\Vert=\sum_{m=1}^\infty\Vert Au_{(l_1,...,l_m)}\Vert<\gamma.$$ Since $\gamma$ was arbitrary,  we have shown that $\mathbf{s}_\xi(A)=0.$ 
\end{proof}

\section{On  operators having $\gamma$-$S_\xi$ and $\gamma$-$G_{\xi, \zeta}$  properties }

Let us recall a special case of the infinite Ramsey theorem, the proof of which was achieved in steps by
Nash-Williams \cite{NW}, Galvin and Prikry \cite{GP}, Silver \cite{S}, and Ellentuck \cite{E}.  See Diestel [Corollary p.96] \cite{D}  for a complete proof.

\begin{theorem} If $\mathcal{V}\subset [\nn]$ is closed, then for any $L\in[\nn]$, there exists $M\in[L]$ such that either $[M]\subset \mathcal{V}$ or $[M]\cap \mathcal{V}=\emptyset$. 

\label{Ramsey}
\end{theorem}

We deduce the following easy consequence on  operators having $\gamma$-$S_\xi$ and $\gamma$-$G_{\xi, \zeta}$  properties.

\begin{proposition} Let $\zeta, \xi$ be countable ordinals, $\gamma\geqslant 0$ a constant, $U,V$ Banach spaces, and $A:U\to V$ an operator. 
\begin{enumerate}[(i)]\item If $A$ has the $\gamma$-$S_\xi$ Property, then for any weakly null collection  $\mathbf{u}=(u_E)_{E\in [\nn]^{<\omega}\setminus\{\varnothing\}}$ in $ B_U$  and any $L\in[\nn]$, there exists $M\in [L]$ such that for all $N\in[M],$ $\Vert A\mathbb{E}_{\xi,N}\mathbf{u}\Vert_2^w\leqslant\gamma.$
\item If $A$ has the $\gamma$-$G_{\xi, \zeta}$ Property, then  for any  weakly null collection  $\mathbf{u}=(u_E)_{E\in [\nn]^{<\omega}\setminus\{\varnothing\}}$ in $ B_U,$   any $L\in[\nn]$, there exists $M\in [L]$ such that for all $N\in[M],$ $\Vert A\mathbb{E}_{\xi,\zeta, N}^{(2)}\mathbf{u}\Vert_1^w\leqslant\gamma.$
 \end{enumerate}
\label{hereditary}
\end{proposition}

\begin{proof}$(i)$ Assume  that $A:U\to V$ has the $\gamma$-$S_\xi$ Property. Fix a weakly null collection $\mathbf{u}=(u_E)_{ E\in[\nn]^{<\omega}\setminus\{\varnothing\}}$  in $B_U$  and $L\in[\nn]$.    For $m\in\nn$, let $\mathcal{V}_m$ denote the set of all $M\in[\nn]$ such that $$\|(A\mathbb{E}_{\xi, M}\mathbf{
u}(n))_{n=1}^m\|_2^w \leqslant \gamma.$$   Let $\mathcal{V}=\cap_{m=1}^\infty \mathcal{V}_m$.  Note that the conclusion of $(i)$ is that there exists $M\in [L]$ such that $$[M] \subset \mathcal{V}.$$  The hypothesis that $A$  has the $\gamma$-$S_\xi$ Property  implies that for any $M\in [L]$, $[M]\cap \mathcal{V}\neq \varnothing$. Therefore in order to deduce the existence of the desired $M$, we need only to show that each set $\mathcal{V}_m$ is closed  and by Theorem \ref{Ramsey},  there must exist some  $M\in[L]$  such that either $[M]\subset \mathcal{V}$ or  $[M]\cap \mathcal{V}= \varnothing$.  But the $\gamma$-$S_\xi$ Property precludes the latter alternative.  Let us show that $\mathcal{V}_m$ is closed.   By the permanence property, if $M\in[\nn]$ is fixed and $E_1<\ldots  <E_m$ are successive, maximal members of $\mathcal{S}_\xi$ such that $\cup_{n=1}^m E_n$ is an initial segment of $M$, then $$\mathbb{E}_{\xi,M}\mathbf{u}(n) = \mathbb{E}_{\xi,N}\mathbf{u}(n),$$ for each $1\leqslant n\leqslant m$ and any $N\in[\nn]$ such that $\cup_{n=1}^m E_n$  is an initial segment of $ N$.   From this it follows that for a fixed $m$, the map $$M\mapsto \|(A\mathbb{E}_{\xi,M} \mathbf{u}(n))_{n=1}^m\|_2^w$$ is locally constant, and therefore continuous.      From this it follows that the preimage of $[0,\gamma]$ under this map, which is $\mathcal{V}_m$, is closed.

$(ii)$ The proof of $(ii)$ is an inessential modification of the proof of $(i)$.  

\end{proof}

\section{On   classes of operators $\mathbf{S}_\xi$ and  $\mathbf{G}_{\xi,\zeta}$ and the functionals $\mathbf{s}_\xi$ and $\mathbf{g}_{\xi,\zeta}$}

The purpose of this section is to establish some basic properties about the  classes of operators $\mathbf{S}_\xi$ and  $\mathbf{G}_{\xi,\zeta}$ and the functionals $\mathbf{s}_\xi$ and $\mathbf{g}_{\xi,\zeta}$  that will be used later, some of them without explicit mention.

\begin{rem}\upshape It is quite clear that if $Y$ is a subspace of $X$, then $\mathbf{s}_\xi(Y)\leqslant \mathbf{s}_\xi(X)$ and $\mathbf{g}_{\xi,\zeta}(Y)\leqslant \mathbf{g}_{\xi,\zeta}(X)$ for any countable $\xi, \zeta$.  Moreover, it is easy to see that for any countable $\xi, \zeta$, an operator $A:U\longrightarrow V$ has the $\mathcal{S}_\xi$ \textit{Property} (resp. $\mathcal{G}_{\xi,\zeta} $  \textit{Property}) if, and only if, it  has the $\mathcal{S}_\xi$  \textit{Property} (resp. $\mathcal{G}_{\xi,\zeta} $  \textit{Property}) for $U, V$ endowed with equivalent norms. Therefore, when considering membership of an operator in the classes $\mathbf{S}_\xi, \mathbf{G}_{\xi, \zeta}$, we can renorm the domain and codomain for convenience. 
\label{renorm}
\end{rem}

\begin{proposition} Fix $\xi, \zeta<\omega_1$. Let $U,V$ be Banach spaces.
 \begin{enumerate}[(i)]\item The functionals $\mathbf{s}_\xi$ and $\mathbf{g}_{\xi,\zeta}$ are seminorms on $\mathbf{S}_\xi(U,V)$ and $\mathbf{G}_{\xi, \zeta}(U,V)$, respectively.
 \item The classes $\mathbf{S}_\xi$, $\mathbf{G}_{\xi,\zeta}$ are operator ideals.     \item If $X$ is a Banach space which does not contain an isomorph of $\ell_1$ and $Q:X\to U$ is a surjection such that $AQ\in \mathbf{S}_\xi$ (resp. $AQ\in \mathbf{G}_{\xi,\zeta}$), then $A\in \mathbf{S}_\xi$ (resp. $\mathbf{G}_{\xi,\zeta}$).    \item If $X$ is a Banach space and $j:V\to X$ is an isomorphic embedding such that $jA\in \mathbf{S}_\xi$ (resp. $jA\in \mathbf{G}_{\xi,\zeta}$), then $A\in \mathbf{S}_\xi$ (resp. $A\in \mathbf{G}_{\xi,\zeta}$).

\end{enumerate}
\label{ideal}
\end{proposition}

\begin{proof}$(i)$ Fix $A,B\in \mathbf{S}_\xi(U,V)$.   Fix $\alpha > \mathbf{s}_\xi(A)$ and $\beta> \mathbf{s}_\xi(B)$.    By the definition of $\mathbf{s}_\xi$, $A$ has the $\alpha$-$S_\xi$ Property and $B$ has the $\beta$-$S_\xi$ Property. Fix $L\in[\nn]$. Let  $\mathbf{u}=(u_E)_{ E\in [\nn]^{<\omega}\setminus\{\varnothing\}}$  be a weakly null collection in $ B_U$.  By Proposition \ref{hereditary}, there exists $M_1\in [L]$ such that for all $N\in [M_1]$, $$\|A\mathbb{E}_{\xi,N}\mathbf{u}\|_2^w \leqslant \alpha.$$    Since $B$ has the $\beta$-$S_\xi$ Property, there exists $M\in [M_1]$ such that  $$\|B\mathbb{E}_{\xi,M}\mathbf{u}\|_2^w \leqslant \beta.$$   Therefore \[\|(A+B)\mathbb{E}_{\xi,M}\mathbf{u}\|_2^w \leqslant \|A\mathbb{E}_{\xi,M}\mathbf{u}\|_2^w + \|B\mathbb{E}_{\xi,M}\mathbf{u}\|_2^w \leqslant \alpha + \beta.\]   Since this holds for any such $\mathbf{u}$ and $L$, $\mathbf{s}_\xi(A+B)\leqslant \alpha + \beta$.    Its follows immediately that  $$\mathbf{s}_\xi(A+B) \leqslant \mathbf{s}_\xi(A)+\mathbf{s}_\xi(B)$$ 
 and   $A+B\in \mathbf{S}_\xi(U,V)$.  It is quite clear that $\mathbf{s}_\xi(cA) =|c| \mathbf{s}_\xi(A)$  any scalar $c$.    The proof  of the analogous statement for  $\mathbf{g}_{\xi, \zeta}$ is similar.

$(ii)$    
Next, we show the ideal property.  We recall  that $\mathbf{S}_\xi$ contains all compact operators and, in particular, all finite rank operators (Remark \ref{cc operator}).  Fix Banach spaces $T,U,V,W$ and operators $C:T\to U$, $B:U\to V$, $A:V\to W$ such that $B\in \mathbf{S}_\xi$, $\|A\|, \|C\|\leqslant 1$.   Then  for any weakly null collection $\mathbf{t}=(t_E)_{ E\in [\nn]^{<\omega}\setminus\{\varnothing\}}$ in $B_T$  and $L\in[\nn]$, since  $$\mathbf{u}:= (Ct_E)_{\varnothing \neq E\in[\nn]^{<\omega}}$$   is   a  weakly null collection in $B_U$,  for any $\ee>0$, there exists $M\in [L]$ such that $$\|BC\mathbb{E}_{\xi,M} \mathbf{t}\|_2^w = \|B\mathbb{E}_{\xi,M} \mathbf{u}\|_2^w \leqslant \mathbf{s}_\xi(B)+\ee.$$    Then $$\|ABC\mathbb{E}_{\xi,M} \mathbf{t}\|_2^w \leqslant \|A\|\|BC \mathbb{E}_{\xi,M} \mathbf{t}\|_2^w \leqslant \mathbf{s}_\xi(B)+\ee.$$  Since such $\mathbf{t}$, $L$, and $\ee>0$ were arbitrary,  $\mathbf{s}_\xi(ABC)\leqslant \mathbf{s}_\xi(B)$, and $ABC\in \mathbf{S}_\xi$.   If we remove the assumption that $\|A\|,\|C\|\leqslant 1$, then by homogeneity, we obtain the estimate $$\mathbf{s}_\xi(ABC)\leqslant \Vert A\Vert \;\Vert C\Vert \  \mathbf{s}_\xi(B).$$  This concludes $(ii)$ for $\mathbf{S}_\xi$.  

For $\mathbf{G}_{\xi,\zeta}$, the proof is an inessential modification. The only difference is that in the proof that $\mathbf{G}_{\xi,\zeta}$ contains all completely continuous operators, we use the membership \[A\mathbb{E}^{(2)}_{\xi,\zeta ,M}\mathbf{u}(n)\in \Bigl\{\sum_{m=1}^\infty a_mA u_{M|m}: (a_m)_{m=1}^\infty \in c_{00}\cap B_{\ell_\infty}\Bigr\} \] for all $n\in\nn$.

$(iii)$ Assume $AQ\in \mathbf{S}_\xi$.   If $Q:X\to U$ is a surjection, then $U$ is isomorphic to $X/\ker(Q)$. By renorming $U$, we can assume $Q:X\to U$ is a quotient map. We note that this does not affect the conclusion of $(iii)$, as noted in Remark \ref{renorm}.    

Let given $\mathbf{u}=(u_E)_{ E\in [\nn]^{<\omega}\setminus\{\varnothing\}}$  a weakly null collection in $ B_U$ and $L_0\in[\nn].$ By Proposition \ref{sara}, there exist a weakly null collection $\mathbf{x}=(x_E)_{E\in[\nn]^{<\omega}\setminus\{\varnothing\}}$ in $ 3B_X$ and $L\in[L_0]$ such that for any $M\in[L]$, there exists $N\in[M]$ such that $$Qx_{N|n}=u_{N|n},$$ for all $n\in\nn$.  The latter condition implies that for any $M\in [L]$, there exists $N\in [M]$ such that $$AQ\mathbb{E}_{\xi,N}\mathbf{x}= A\mathbb{E}_{\xi,N}\mathbf{u}.$$   
       
       Since $AQ\in \mathbf{S}_\xi$ and since $(x_E/3)_{E\in [\nn]^{<\omega}\setminus\{\varnothing\}}$  is a weakly null collection in $B_X$, by Proposition \ref{hereditary},
       for any $\gamma> \mathbf{s}_\xi(AQ)$, there exists $M\in[L]$ such that for all $N\in [M]$, $$\|AQ\mathbb{E}_{\xi,N}\mathbf{x}\|_2^w \leqslant 3\gamma.$$    In particular, if $N\in[M]$ is chosen such that $$AQ\mathbb{E}_{\xi,N}\mathbf{x}=A\mathbb{E}_{\xi,N}\mathbf{u},$$ then \[ \|A\mathbb{E}_{\xi,N}\mathbf{u}\|_2^w = \|AQ\mathbb{E}_{\xi,N}\mathbf{x}\|_2^w\leqslant 3\gamma.\]   From this it follows that $\mathbf{s}_\xi(A)\leqslant 3\mathbf{s}_\xi(AQ).$   The proof for $\mathbf{g}_{\xi,\zeta}$ is similar.

$(iv)$ As noted in Remark \ref{renorm}, by renorming $V$, we can assume $j$ is an isometric embedding.  Then since $$\|jA\mathbb{E}_{\xi,M}\mathbf{u}\|_2^w=\|A\mathbb{E}_{\xi,M}\mathbf{u}\|_2^w,$$ for  $M\in[\nn]$, and any weakly null collection $\mathbf{u}$ in $B_U$, it follows immediately that $\mathbf{s}_\xi(A)=\mathbf{s}_\xi(jA).$  The proof for $\mathbf{g}_{\xi,\zeta}$ is similar.

\end{proof}

\begin{corollary} Fix ordinals $\xi, \zeta<\omega_1$.  Let $X,X_1, Y,Y_1$ be Banach spaces and let $A:X\to X_1$, $B:Y\to Y_1$ be operators such that $B$ is finite rank. Let $A\otimes B:X\tim_\pi Y\to X_1\tim_\pi Y_1$ be the  tensor product of $A$ and $B$. If $\mathbf{g}_{\xi, \zeta}(A)<\infty$ (resp. $\mathbf{g}_{\xi, \zeta}(A)=0$) then $\mathbf{g}_{\xi, \zeta}(A\otimes B)<\infty$ (resp. $\mathbf{g}_{\xi ,\zeta}(A\otimes B)=0$). 

\label{finite}
\end{corollary}

\begin{proof} If $B$ is the zero operator, then so is $A\otimes B$, and $\mathbf{g}_{\xi,\zeta}(A\otimes B)=0<\infty$.  

If $B$ is rank $1$, then $B=y^*\otimes y_1$ for some $y^*\in Y^*$ and $y_1\in Y_1.$  Let $L: X\widehat{\otimes}_\pi Y\to  X$ to be the operator such that $L(x\otimes y)=y^*(y)x$ for each $x\in X$ and $y\in Y.$ Let  $R:X_1\to X_1\widehat{\otimes}_\pi Y_1$ to be the operator such that by  $R(x_1)=x_1\otimes y_1$ for each $x_1\in X_1.$ Then, for any $x\in X$ and $y\in Y,$   $$A(x)\otimes B(y)=RAL(x\otimes y),$$ so $A\otimes B=RAL$ , where $\|L\|\|R\|=\|y^*\|\|y_1\|$.  By Proposition \ref{ideal} $(ii)$, $$\mathbf{g}_{\xi, \zeta}(A\otimes B) \leqslant \mathbf{g}_{\xi, \zeta}(A)\|y^*\|\|y_1\|.$$  This inequality implies the result in the case that $B$ is rank $1$.

If the rank of $B$ is $n>1$, then we can write $B=\sum_{i=1}^n B_i$, where $B_i$ is rank $1$ for each $1\leqslant i\leqslant n$.    Then \[\mathbf{g}_{\xi, \zeta}(A\otimes B) = \mathbf{g}_{\xi,\zeta}\Bigl(A\otimes \sum_{i=1}^n B_i\Bigr) =\mathbf{g}_{\xi, \zeta}\Bigl(\sum_{i=1}^n A\otimes B_i\Bigr) \leqslant \sum_{i=1}^n \mathbf{g}_{\xi, \zeta}(A\otimes B_i).\]  We conclude by the previous paragraph.

\end{proof}

 \section{Sufficient conditions for operator $A$ to satisfy  $\mathbf{g}_{\xi, \zeta}(A)=0$}

We next show that once the prescribed linear combinations of branches of weakly null trees become weakly $1$-summing,  higher linear combinations can be made arbitrarily close to $0$ in norm (Proposition \ref{die}).   We start by proving the following lemma.

\begin{lemma}\label{diagonal}Suppose that $U,V$ are Banach spaces, $\xi, \zeta<\omega_1$ are ordinals, and $A:U\to V$ is an operator such that for each  weakly null collection $\mathbf{u}=(u_E)_{ E\in[\nn]^{<\omega}\setminus\{\varnothing\}}$ in $ B_U$,   each $\ee>0$, and each $L\in[\nn]$, there exists $E\in [L]^{<\omega}\cap MAX(S_\zeta[S_\xi])$ such that $\|A\mathbb{E}^{(2)}_{\xi,\zeta, E}\mathbf{u}\|<\ee$. Then
$\mathbf{g}_{\xi, \zeta}(A)=0$.
\end{lemma}
\begin{proof}Fix a weakly null collection $\mathbf{u}=(u_E)_{E\in[\nn]^{<\omega}\setminus\{\varnothing\}}$ in $ B_U$,  $\ee>0$, and  $L\in[\nn].$ We construct  by induction a  sequence $(E_n)_{n=1}^\infty $ of successive elements in $[L]^{<\omega}\cap MAX(S_\zeta[S_\xi])$ as follows. By hypothesis we can select $E_1\in[L]^{<\omega}\cap MAX(S_\zeta[S_\xi])$ such that $$\Vert A \mathbb{E}^{(2)}_{\xi,\zeta,E_1}\mathbf{u}\Vert<\ee/2.$$
Now assuming that $E_1< \ldots < E_n$ have been choosen,  we denote $\Delta_n=E_1\cup \ldots \cup E_n$ and
we define $\mathbf{v}=(v_E)_{ E\in[\nn]^{<\omega}\setminus\{\varnothing\}}$ in $B_U$ by letting  $v_E=u_{\Delta_n\cup E}$ if $E_n<E$ and $v_E=0$ otherwise. Then $\mathbf{v}$ is weakly null.
By hypothesis we can select $$E_{n+1}\in [(\max E_n,\infty)\cap L ]^{<\omega}\cap MAX(S_\zeta[S_\xi])$$ such that 
$$\Vert A \mathbb{E}^{(2)}_{\xi,\zeta,E_{n+1}}\mathbf{v}\Vert<\ee/2^{n+1}.$$

We denote $M=\bigcup_{n=1}^\infty E_n.$ The sequence $(E_n)_{n=1}^\infty$ is the $S_\zeta[S_\xi]$ decomposition of $M$ and, for every integer $n,$ $\Delta_n=M\vert_{\xi,\zeta,n}.$
It follows from the definition of the weights $p^\xi$ and $q^{\xi,\zeta}$ that for $\varnothing\prec F\preceq E_{n+1}$ and $G=\Delta_n\cup F$ then $p^\xi_F=p^\xi_G$ and $q^{\xi,\zeta}_F=q^{\xi,\zeta}_G$ so
\begin{align*}
\left\Vert A \mathbb{E}^{(2)}_{\xi,\zeta,E_{n+1}}\mathbf{v}\right\Vert&=\left\Vert\sum_{\varnothing\prec F\preceq E_{n+1}}q^{\xi,\zeta}_Fp^\xi_FAu_{\Delta_n\cup F}\right\Vert\\
&=\left\Vert\sum_{M\vert_{\xi,\zeta,n}\prec G\preceq M\vert_{\xi,\zeta,n+1}}q^{\xi,\zeta}_Gp^\xi_GAu_G\right\Vert=\left\Vert A \mathbb{E}^{(2)}_{\xi,\zeta,M}\mathbf{u}(n+1)\right\Vert<\varepsilon/2^{n+1}.\\
\end{align*}
Therefore
$$\Vert A\mathbb{E}^{(2)}_{\xi,\zeta,M}\mathbf{u}\Vert_1^w\leq \sum_{n=1}^\infty\Vert A\mathbb{E}^{(2)}_{\xi,\zeta,M}\mathbf{u}(n)\Vert<\varepsilon.
$$
Since $\varepsilon>0, \mathbf{u}\text{ weakly null collection in }B_U,$ and $L\in[\nn]$ were arbitrary, $\mathbf{g}_{\xi,\zeta}(A)=0.$
\end{proof}

\begin{proposition}\label{die} Let $U,V$ be Banach spaces and fix $\xi,\zeta<\omega_1$.    
If $A:U\to V$ is an operator such that $\mathbf{s}_\xi(A)<\infty$ and
 $\mathbf{g}_{\xi,\zeta}(A)<\infty$, then, for any $0<\eta<\omega_1,$ for any $\varepsilon>0,$ any weakly null collection $\mathbf{u}=(u_E)_{E\in [\nn]^{<\omega}\setminus\{\varnothing\}}$ in $B_U$ and any $L\in [\nn],$ there exists $M\in[ L]$  such that for any $N\in [M],$ $\Vert A\mathbb{E}^{(2)}_{\xi,\zeta+\eta,N}\mathbf{u}(1)\Vert<\varepsilon.$ In particular $\mathbf{g}_{\xi,\zeta+\eta}(A)=0.$
\end{proposition}
\begin{proof} 

We note that the last sentence of the proposition follows  from the preceding  one together with Lemma \ref{diagonal}. Therefore we prove only the first assertion.
By the infinite Ramsey theorem it is sufficient to prove that 
for any $0<\eta<\omega_1,$ for any $\varepsilon>0,$ any weakly null collection $\mathbf{u}=(u_E)_{E\in [\nn]^{<\omega}\setminus\{\varnothing\}}$ in $B_U$ and any $L\in [\nn],$ there exists $M\in[ L]$  such that $$\Vert A\mathbb{E}^{(2)}_{\xi,\zeta+\eta,M}\mathbf{u}(1)\Vert<\varepsilon.$$

It follows from the proof of Lemma \ref{diagonal} that it is sufficient to 
prove by induction on $\eta\in(0,\omega_1)$ that
 for any weakly null collection $\mathbf{u}=(u_E)_{ E\in[\nn]^{<\omega}\setminus\{\varnothing\}}$   in  $B_U,$ each
 $\varepsilon>0,$ each $L\in[\nn],$     there exists $E\in MAX(S_{\zeta+\eta}[S_\xi])\cap [L]^{<\omega}$ such that $$\Vert A\mathbb{E}^{(2)}_{\xi,\zeta+\eta,E}\mathbf{u}\Vert<\varepsilon.$$

We first prove the $\eta=1$ case. Fix $\varepsilon>0, $
 $\mathbf{u}=(u_E)_{ E\in[\nn]^{<\omega}\setminus\{\varnothing\}}$  a weakly null collection in  $B_U$ and  $L\in[\nn].$ Fix $\gamma>\mathbf{g}_{\xi,\zeta}(A).$ Replacing $L$ with a subset thereof and relabeling, we can assume that $\gamma/\sqrt{\min L}<\varepsilon.$
We can choose $M\in[L]$ such that  $$\Vert A\mathbb{E}^{(2)}_{\xi,\zeta,M}\mathbf{u}\Vert_1^w\leqslant\gamma.$$ If $(E_k)_{k=1}^\infty$ denotes the $S_\zeta[S_\xi]$ decomposition of $M$ (Lemma \ref{decomposition}), therefore
 $$E=\bigcup_{k=1}^{\min M}E_k=M\vert_{\xi,\zeta+1,1}.$$ 
 
Then,
\begin{align*}
A\mathbb{E}^{(2)}_{\xi,\zeta+1,E}\mathbf{u}&=\sum_{n=1}^{\min M}\sum_{M\vert_{\xi,\zeta,n-1}\prec F\preceq M\vert_{\xi,\zeta ,n}}q^{\xi,\zeta+1}_Fp^\xi_FAu_F \cr
&=\frac{1}{\sqrt{\min M}}\sum_{n=1}^{\min M}A\mathbb{E}^{(2)}_{\xi, \zeta, M}\mathbf{u}(n).\end{align*}

Consequently,  $$\Vert A\mathbb{E}^{(2)}_{\xi,\zeta+1,E}\mathbf{u}\Vert\leqslant \gamma/\sqrt{\min M}<\ee.$$
This completes the $\eta=1$ case.

Assume $2\leqslant\eta<\omega_1$ and that, for all $\nu<\eta,$ $\mathbf{g}_{\xi,\zeta+\nu}(A)=0.$ Applying the $\eta=1$ case to the ordinals $\xi, \zeta+\nu$ yields the result for $\eta=\nu+1$.\medskip

Assume $\eta$ is a limit ordinal.     Fix $\varepsilon>0,$ fix $\mathbf{u}=(u_E)_{ E\in[\nn]^{<\omega}\setminus\{\varnothing\}}$ a weakly null collection in $ B_U$  and fix $L\in[\nn]$.   Fix $s>\mathbf{s}_\xi(A)$ and note that with Remark \ref{perm3} and Proposition \ref{hereditary}, by replacing $L$ with a subset thereof and relabeling, we can assume  $\|A\mathbb{E}^{(2)}_{\xi, \nu, M}\mathbf{u}(n)\|<s,$  for all $M\in[L],$  all countable ordinals $\nu$ and all integer $n.$

Note that $\zeta+\eta$ is a limit ordinal,  let  $\eta_n\uparrow \zeta+\eta$ be the characteristic sequence in $\zeta+\eta$ which satisfies also $S_{\eta_n+1}\subset S_{\eta_{n+1}}.$ With this sequence 
 \[\mathcal{S}_{\zeta+\eta}=\{\varnothing\}\cup \{E\in[\nn]^{<\omega}\setminus\{\varnothing\}: E\in \mathcal{S}_{\eta_{\min E}+1}\}. \]  
 
By replacing $L$ with a subset, we can assume $$s/\sqrt{\min L}<\ee/2 \ \ \hbox{and}  \ \ \zeta+1<\eta_{\min L}.$$       

\medskip
 Let $F_1$ be the maximal initial segment of $L$ which lies in $S_{\eta_{\min L}}[S_\xi].$  We have $$ \Vert A\mathbb{
E}^{(2)}_{\xi,\eta_{\min L},F_1}\mathbf{u}\Vert<s.$$
Define $\mathbf{v}=(v_E)_{E\in[\nn]^{<\omega}\setminus
\{\varnothing\}}$
by letting $v_E=u_{F_1\cup E}$ if $F_1<E$ and $v_E=0$ otherwise. Note that $\mathbf{v}$ is a weakly null collection in $B_U.$ Since $g_{\xi,\eta_{\min L}}(A)=0,$ there exists $L_1\in[L], F_1<L_1$ such that for all  $n\in\nn$, $$\Vert A\mathbb{
E}^{(2)}_{\xi,\eta_{\min L},L_1}\mathbf{v}(n)\Vert<\varepsilon/2\min L.$$
Let  $(F_n)_{n=2}^\infty$ be the
$S_{\eta_{\min L}}[S_\xi]$-decomposition of $L_1$ and let $M=F_1\cup L_1.$ The sequence  $(F_n)_{n=1}^\infty$ is the
$S_{\eta_{\min L}}[S_\xi]$-decomposition of $M.$
We note that for every integer $n$
$$A\mathbb{E}^{(2)}_{\xi,\eta_{\min L},L_1}\mathbf{v}(n)=
A\mathbb{E}^{(2)}_{\xi,\eta_{\min L},M}\mathbf{u}(n+1)
$$

The sequence $(\min F_1,\ldots,\min F_{\min L})\in MAX(S_1)$ and so 
$$\bigcup_{n=1}^{\min L}F_n\in MAX(S_1[S_{\eta_{\min L}}[S_\xi]])=MAX(S_{\eta_{\min L}+1}[S_\xi])\subset MAX(S_{\zeta+\eta}[S_\xi]).$$

\medskip

 We deduce that the maximal initial segment of $M$ which lies in $\mathcal{S}_{\zeta+\eta}[\mathcal{S}_\xi]$ is $ \bigcup_{n=1}^{\min L} F_n$.    
 Therefore   
 \begin{align*} \|A\mathbb{E}^{(2)}_{\xi, \zeta+\eta, M}\mathbf{u}(1)\| & = \|A\mathbb{E}^{(2)}_{\xi, \eta_{\min L}+1, M}\mathbf{u}(1)\| = \frac{1}{\sqrt{\min L}}\Bigl\|A\sum_{m=1}^{\min L} \mathbb{E}^{(2)}_{\xi, \eta_{\min L}, M}\mathbf{u}(m)\Bigr\|\cr
 & \leqslant \frac{1}{\sqrt{\min L}}\|A\mathbb{E}^{(2)}_{\xi, \eta_{\min L}, M}\mathbf{u}(1)\| + \frac{1}{\sqrt{\min L}}\sum_{m=2}^{\min L}\|A\mathbb{E}^{(2)}_{\xi, \eta_{\min L}, M}\mathbf{u}(m)\| \cr
 &\leqslant \frac{1}{\sqrt{\min L}}\|A\mathbb{E}^{(2)}_{\xi, \eta_{\min L}, M}\mathbf{u}(1)\| +\frac{1}{\sqrt{\min L}} \sum_{m=1}^{\min L-1}\|A\mathbb{E}^{(2)}_{\xi, \eta_{\min L}, M}\mathbf{v}(m)\| \cr
    & < \frac{s}{\sqrt{\min L}} + \frac{1}{\sqrt{\min L}}\sum_{m=1}^{\min L-1}\ee/2\min L <\ee.
\end{align*}
\end{proof}

\section{On the values of $\mathbf{s}_\xi(U)$ and $\mathbf{g}_{\xi,\zeta}( U )$ when $Sz(U)>\omega^{\xi+1}$ for $\xi< \omega_1$}

The main objective of this section is to show that    $\mathbf{s}_\xi(U)=\mathbf{g}_{\xi,\zeta}( U )=\infty$ for certain Banach spaces $U$ satisfying $Sz(U) > \omega^{\xi+1}$ for $\xi< \omega_1.$

\begin{proposition} \label{s0} Fix $\xi<\omega_1$. Let $U$ be an infinite dimensional  separable  Banach space not containing an isomorphic copy of  $\ell_1$. 
If $Sz(U)>\omega^{\xi+1}$, then  $\mathbf{s}_\xi(U)=\mathbf{g}_{\xi,0}(U)=\infty$. 
\end{proposition} 
\begin{proof} Since $U$ is an infinite dimensional separable Banach space not  containing $\ell_1$,  by Theorem \ref{Sz=Iw} we have    $I_w(U)=Sz(U)> \omega^{\xi+1}$. So, according to Proposition \ref{OdellSZ}, there exist $\varepsilon>0$ and a collection  $\mathbf{u}=(u_E)_{ E\in \mathcal{S}_{\xi+1}\setminus\{\varnothing\}}$ in $B_U$ such that:
\begin{enumerate}[(a)]\item for any $E\in \mathcal{S}_{\xi+1}\setminus MAX(\mathcal{S}_{\xi+1})$, the sequence $(u_{E\smallfrown(n)})_{E<n}$ is  weakly null,  
\item for each $E\in \mathcal{S}_{\xi+1},$  $\varepsilon\leqslant\inf\{\Vert x\Vert : x\in \text{co}\{u_F: \varnothing \prec F\preceq E\}
\}$
    \end{enumerate} 

We extend $\mathbf{u}$ to $[\nn]^{<\infty}\setminus\{\varnothing\}$ by  $u_E=0$ for all $E\in [\nn]^{<\omega}\setminus \mathcal{S}_{\xi+1}.$ We continue to note by $\mathbf{u}$ this extension that remains   a weakly null collection in $B_U$.   Fix $n\in\nn$ and let $L\in[\nn]$  be such that $n= \min L$.  
Fix  $M\in[L]$,   thus $m:=\min M\geqslant n.$   Since $M\vert_{\xi,m}\in \mathcal{S}_{\xi+1}$, (the notation $M\vert_{\xi,k}$ is defined in  Remark \ref{notation}), it follows that 
$$\frac{1}{m}\sum_{i=1}^m \mathbb{E}_{\xi,M}\mathbf{u}(i)\in \text{co}\{u_F: \varnothing\prec F\preceq M\vert_{\xi,m} \}.$$
   Therefore $$\ee \leqslant \Bigl\|\frac{1}{m}\sum_{i=1}^m \mathbb{E}_{\xi,M}\mathbf{u}(i)\Bigr\|,$$ consequently $$\|\mathbb{E}_{\xi,M}\mathbf{u}\|_2^w \geqslant \ee m^{1/2}\geqslant \ee n^{1/2}.$$ Since $n$ was arbitrary, $\mathbf{s}_\xi(U)=\infty$.  
It also follows from this that $$\|\mathbb{E}^{(2)}_{\xi,0,M}\mathbf{u}\|_1^w  \geqslant \|\mathbb{E}_{\xi,M}\mathbf{u}\|_2^w \geqslant \ee m^{1/2}\geqslant \ee n^{1/2}.$$ This shows that $\mathbf{g}_{\xi,0}(U)=\infty$ and we are done.   

\end{proof}

In order to prove $\mathbf{g}_{\xi,\zeta}(U)=\infty$ for all $1\leqslant\zeta<\omega_1$ we need to establish three simple lemmas.
\begin{lemma}(i) For any $\xi<\omega_1, 1\leq\zeta<\omega_1,$  any $F\in MAX(S_\zeta[S_\xi]),$ there exist $E_1,\ldots,E_n\in MAX(S_1[S_\xi]),$ $E_1<\ldots<E_n$ such that $F=\cup_{i=1}^nE_i.$

(ii) For any $\xi<\omega_1, 1\leq\zeta<\omega_1,$  any $M\in [\nn],$ if $(F_n)_{n=1}^\infty$ is the $S_\zeta[S_\xi]$-decomposition of $M,$ then there exists a sequence $(E_i)_{i=1}^\infty$ of successive elements in $MAX(S_1[S_\xi])$ and a strictly increasing sequence $(k_n)_{n=1}^\infty$  of integers such that, for each $n,$ $F_n=\cup_{i=k_{n-1}+1}^{k_n}E_i,$ $(k_0=0).$
\end{lemma}\label{l0}
\begin{proof}(i) We work by induction on $\zeta$ with $\xi$ held fixed. The case $\zeta=1$ is tautological. Assume $2\leqslant\zeta<\omega_1$ and the result holds for all $\nu<\zeta.$ 

Assume $\zeta=\nu+1.$ If 
$F\in MAX(S_\zeta[S_\xi])=MAX(S_1[S_\nu[S_\xi]]),$
$m=\min F,$ we can write $F=\cup_{j=1}^mE_j,$ where $E_1,\ldots,E_m\in MAX(S_\nu[S_\xi]),$ $E_1<\ldots<E_m.$ By the inductive hypothesis, for each $1\leqslant j\leqslant m$ we can write $E_j=\cup_{i=1}^{n_j}E_{i,j},$ $E_{1,j},\ldots,E_{n_j,j}\in MAX(S_1[S_\xi])$ and $E_{1,j}<\ldots<E_{n_j,j}.$ Then $F=\cup_{j=1}^m\cup_{i=1}^{n_i}E_{i,j}.$

Assume $\zeta$ is a limit ordinal and $\zeta_n\uparrow\zeta$ is the sequence used to define $S_\zeta.$ For $F\in MAX(S_\zeta[S_\xi]),$ $F\in MAX(S_{\zeta_{\min E}+1})[S_\xi]$ and the existence follows from the inductive hypothesis.

(ii) follows directely from (i). It is obvious that $(E_n)_{n=1}^\infty$ is the $S_1[S_\xi]$-decomposition of $M.$
\end{proof}
\begin{lemma} \label{l1} For any $\xi<\omega_1, 1\leq\zeta<\omega_1,$ any Banach space $U,$ any collection $\mathbf{u}=(u_E)_{E\in [\nn]^{<\omega}\setminus\{\varnothing\}}$ in $U,$ any $M\in [\nn]
,$ there exists a strictly increasing sequence of integers $(k_l)_{l=1}^\infty$ such that, for  any $n\in \nn$
$$\mathbb{E}^{(2)}_{\xi,\zeta,M}\mathbf{u}(n)\in \mathrm{span}\{\mathbb{E}^{(2)}_{\xi,1,M}\mathbf{u}(m):k_{n-1}<m\leqslant k_n\}.
$$
\end{lemma}
\begin{proof}Fix a collection $\mathbf{u}=(u_E)_{E\in [\nn]^{<\omega}\setminus\{\varnothing\}}$ in $U$ and $M\in [\nn].$   We proceed by induction on $\zeta,$ $\xi$ held fixed. Let $(E_l)_{l=1}^\infty$ be the $S_1[S_\xi]$-decomposition of $M.$ We denote $\Delta_i=\cup_{l=1}^iE_l.$ 

The case $\zeta=2.$   Let $(k_l^2)_{l=1}^\infty$ by the sequence introduced in Lemma \ref{l0} correponding to $\zeta=2$ then, for each $n,$ $M\vert_{\xi,2,n}=\Delta_{k_n^2}.$ For any $l\in \nn$ we denote $m_i=\min E_{k_{l-1}+1}.$ Then
\begin{align*}
\mathbb{E}^{(2)}_{\xi,2,M}\mathbf{u}(l)&=\sum_{i=k_{l-1}^2+1}^{k_l^2}\sum_{\Delta_{i-1}\prec G\preceq \Delta_i}q^{\xi,2}_Gp^\xi_Gu_G.\cr\noalign{It follows from the definition of the weights $q^{\xi,2}$ that}
&=\frac{1}{\sqrt{m_l}}\sum_{i=k_{l-1}^2+1}^{k_l^2}\mathbb{E}^{(2)}_{\xi,1,M}\mathbf{u}(i)\cr
&\in \text{span}\{\mathbb{E}^{(2)}_{\xi,1,M}\mathbf{u}(i) : k_{l-1}^2< i\leq k_l^2\}.
\end{align*}

 If $\zeta$ is a limit ordinal, let $\zeta_n\uparrow\zeta$ be the sequence used to define $S_\zeta.$ Then, $$\mathbb{E}^{(2)}_{\xi,\zeta,M}\mathbf{u}(n)=\mathbb{E}^2_{\xi,\zeta_{\min M}+1,M}(n),$$ and the result follows from the $\zeta_{\min M}+1$ case.

For the successor case $\zeta=\nu+1,$ we proceed as in the $\zeta=2$ case. 
\end{proof}

\begin{lemma} \label{l2} If  $\mathbf{u}=(u_E)_{E\in [\nn]^{<\omega}\setminus\{\varnothing\}}$ is a weakly null seminormalized collection in a Banach space $U,$ then, 
 there exists $L\in[\nn]$ such that, 
  for every $M\in [L],$ the sequence 
$(u_{\{M(1),\ldots,M(n)\}})_{n=1}^\infty$ is a 2-basic sequence. For any $\xi<\omega_1$ and $M\in [L],$ the sequence $(\mathbb{E}^{(2)}_{\xi,1,M}\mathbf{u}(l))_{l=1}^\infty$ is a block basis of $(u_{\{M(1),\ldots,M(n)\}})_{n=1}^\infty.$
\end{lemma}
\begin{proof} We follows the proof of \cite[Theorem 1.a.5.]{LT}. Let $(\varepsilon_k)_{k=1}^\infty$ a sequence of positive numbers such that $$\prod_{k=1}^\infty(1+\varepsilon_k)\leqslant2.$$  We construct by induction a strictly increasing sequence $(n_k)_{k=1}^\infty$ of integers. We fix arbitrary $n_1\in \nn.$ Suppose that $n_1,\ldots,n_k$ have been constructed. Since the collection $\mathbf{u}$  is  weakly null 
there exists $n_{k+1}>n_k$ such that for every $\{\Lambda(1),\ldots,\Lambda(l)\}\subset \{n_1,\ldots,n_k\},$ for every $y\in\mathrm{span}\{u_{\{\Lambda(1),\ldots,\Lambda(i)\}}:1\leqslant i\leqslant l\}, $ and every scalar $\alpha,$ $$\Vert y\Vert\leqslant(1+\varepsilon_k)\Vert y+\alpha u_{\{\Lambda(1),\ldots,\Lambda(l),n_{k+1}\}}\Vert.$$  Thus, the set $L=\{n_k:1\leqslant k<\infty\}$ is such that for every $M\in [L]$,   $(u_{\{M(1),\ldots,M(n)\}})_{n=1}^\infty$ is a 2-basic sequence.
\end{proof}
\begin{lemma} \label{l3} Suppose that  $\mathbf{u}=(u_E)_{E\in [\nn]^{<\omega}\setminus\{\varnothing\}}$ is a weakly null seminormalized collection in a Banach space $U$ and  $L$ is as in Lemma \ref{l2}. Then, for any $\xi<\omega_1,$ $2\leqslant\zeta<\omega_1$ and $M\in [L]$ with $2\leqslant m=\min M,$ we have
$$\mathbb{E}^{(2)}_{\xi,\zeta,M}\mathbf{u}(1)=\sum_{l=1}^{k^\zeta_1}a_l(m,\zeta)\mathbb{E}^{(2)}_{\xi,1,M}\mathbf{u}(l).
$$  Moreover $a_2(m,\zeta)>0.$
\end{lemma}
\begin{proof} We work by induction on $\zeta.$ For the case $\zeta=2$ we have
$$\mathbb{E}^{(2)}_{\xi,2,M}\mathbf{u}(1)=\frac{1}{\sqrt{m}}\sum_{i=1}^{m}\mathbb{E}^{(2)}_{\xi,1,M}\mathbf{u}(i).
$$
Thus $a_2(2,m)=1/\sqrt{m}.$ Next,  assume $\zeta>2$ and the result holds for each $\nu<\zeta.$
Suppose, then, that  $\zeta$ is a limit ordinal and let $1\leqslant\zeta_n\uparrow\zeta$ be the sequence used to define $S_\zeta.$ Hence $$\mathbb{E}^{(2)}_{\xi,\zeta,M}\mathbf{u}(1)=\mathbb{E}^{(2)}_{\xi,\zeta_m+1,M}\mathbf{u}(1),$$ and $a_2(\zeta,m)=a_2(\zeta_m+1,m).$
Now suppose that $\zeta=\nu+1$ for some ordinal $\nu$.  Therefore
$$\mathbb{E}^{(2)}_{\xi,\zeta,M}\mathbf{u}(1)=\sum_{\varnothing\prec E\preceq M\vert_{\xi,\zeta,1}}q^{\xi,\zeta}_Ep^\xi_Eu_E=\frac{1}{\sqrt{m}}\sum_{i=1}^m\mathbb{E}^{(2)}_{\xi,\nu,M}\mathbf{u}(i)=\frac{1}{\sqrt{m}}\sum_{l=1}^{k^\nu_1}a_l(\nu,m)\mathbb{E}^{(2)}_{\xi,1,M}\mathbf{u}(l)+\cdots
$$ 
Consequently $a_2(\zeta,m)=a_2(\nu,m)/\sqrt{m}.$
 \end{proof}
\begin{proposition} \label{r0} Fix $\xi<\omega_1$. Let $U$ be an infinite dimensional  separable  Banach space not containing $\ell_1$. If 
$Sz(U)>\omega^{\xi+1}$,  then $\mathbf{g}_{\xi,\zeta}(U)=\infty$ for all $1\leqslant\zeta<\omega_1.$
\end{proposition}
\begin{proof}
There exit $\varepsilon>0$ and a weakly null collection 
$\mathbf{u}=(u_E)_{E\in S_{\xi+1}\setminus\{\varnothing\}}$ in $B_U$ such that
$$\varepsilon\leqslant\inf\{\Vert u\Vert : E\in \mathrm{co}\{u_F : \varnothing\prec F\preceq E\}\}.
$$
We fill out the collection $(u_E)_{E\in S_{\xi+1}\setminus\{\varnothing\}}$ to a weakly null colection $(u_E)_{E\in [\nn]^{<\omega}\setminus\{\varnothing\}}.$ For 
$E\in [\nn]^{<\omega}\setminus\{\varnothing\}$ there exits a unique decomposition $E=\bigcup_{i=1}^r$ for some $r\geqslant1$ and $E_1<\ldots<E_r,$ $E_i\in MAX(S_{\xi+1})$ for $1\leqslant i<r$ and $E_r\in S_{\xi+1}\setminus\{\varnothing\}$ and we define $u_E=u_{E_r}.$

Fix $2\leqslant\zeta<\omega_1.$ We prove that   $\mathbf{g}_{\xi,\zeta}(U)=\infty.$ 
To that end, by the infinite  Ramsey theorem, Proposition \ref{hereditary}, it is sufficient to show that for any real number $C$ there exists $L\in[\nn],$ such that for any $M\in [L]$ there exists $N\in [M]$  such that $\mathbb{E}^{(2)}_{\xi,\zeta,N}\mathbf{u}(1)>C.$ Let $L$  be as in Lemma \ref{l2}. We also assume $\min L\geqslant2.$ Fix $M\in[L]$  and let $E$ be the maximal initial subset of $M$ which member of $S_1[S_\xi],$  $m=\min M.$ Let $P\in[M]$ such that $E<P$ and $\varepsilon a(\zeta,m)\sqrt{\min P}>4C$ and let $N=E\cup P.$ $E$ is also the maximal initial segment of $N$ which is member of $S_1[S_\xi].$ Let $n=\min P$ and let $F$ be the maximal initial segment of $P$ which is member of $S_1[S_\xi].$  
 Then
$$
\mathbb{E}^{(2)}_{\xi,1,N}\mathbf{u}(1)=\frac{1}{\sqrt{ m}}\sum_{i=1}^{m}\mathbb{E}_{\xi,N}\mathbf{u}(i),$$
and
$$\mathbb{E}^{(2)}_{\xi,1,N}\mathbf{u}(2)=\frac{1}{\sqrt{ n}}\sum_{i=m+1}^{m+n}\mathbb{E}_{\xi,N}\mathbf{u}(i).$$
But since
$$\frac{1}{n}\sum_{i=m+1}^{m+n}\mathbb{E}_{\xi,N}\mathbf{u}(i)\in \mathrm{co}\{u_G : \varnothing\prec G\preceq F\}, 
$$
we deduce $$\Vert\mathbb{E}^{(2)}_{\xi,1,N}\mathbf{u}(2)\Vert\geqslant\varepsilon\sqrt{n}.$$
Next, since  $(\mathbb{E}^{(2)}_{\xi,1,N}\mathbf{u}(l))_{l=1}^\infty$ is a  2-basic sequence we conclude
$$\Vert\mathbb{E}^{(2)}_{\xi,\zeta,N}\mathbf{u}(1)\Vert\geqslant\Vert(a_2(\zeta,m)\mathbb{E}^{(2)}_{\xi,1,N}\mathbf{u}(2)\Vert/4\geqslant\varepsilon a_2(\zeta,m)\sqrt{n}/4>C.
$$

From this and from the proof of the Proposition \ref{die},  it will also follow that $\mathbf{g}_{\xi,1}(U)=\infty,$ and $\mathbf{g}_{\xi,0}(U)=\infty$ follows directly from the proof of the Proposition \ref{s0}. 
 \end{proof}

\section{On the values of $\mathbf{s}_\xi(U)$ and $\mathbf{g}_{\xi,\zeta}( U )$ when $Sz(U) \leq \omega^{\xi}$ for $\xi< \omega_1$}

The main objective of this section is to show that    $\mathbf{s}_\xi(U)=\mathbf{g}_{\xi,\zeta}( U )=0$ for certain Banach spaces $U$ satisfying $Sz(U) \leq \omega^{\xi}$ for $\xi< \omega_1$ (Proposition \ref{s}). Before proving  this we need to establish the Lemma \ref{strong}. This lemma shows that while weak nullity is a condition only on sequences of immediate successors, we can show that in Banach spaces with separable dual, these trees have subtrees which have a stronger property. 

\begin{lemma} Let $U$ be a Banach space such that $U^*$ is separable.  If $\mathbf{u}=(u_E)_{ E\in [\nn]^{<\omega}\setminus\{\varnothing\}}$  is a weakly null collection in   $B_U$, then for any $L\in[\nn]$, there exists $M\in [L]$ such that if $F\in [M]^{<\omega}$ and $F_1, F_2,\ldots \in [M]^{<\omega}\setminus\{\varnothing\}$ are such that $F<F_n$ for all $n\in\nn$ and  $\lim_n \max F_n=\infty$,  then the sequence $(u_{F\smallfrown F_n})_{n=1}^\infty$ is weakly null.     
\label{strong}
\end{lemma}

\begin{proof} Let $(u^*_n)_{n=1}^\infty$  be a dense sequence  in  $ U^*$ and define $d:U\to \rr$ by $$d(u)=\sum_{n=1}^\infty \frac{|\langle u^*_n, u\rangle|}{1+2^n\|u^*_n\|}.$$   Note that a bounded sequence $(u_n)_{n=1}^\infty$ in $U$ is weakly null if and only if $\lim_n d(u_n)=0$.    
Fix $L\in[\nn]$ and a weakly null collection $\mathbf{u}=(u_E)_{E \in [\nn]^{<\omega}\setminus\{\varnothing\}}$ in   $B_U$.    
 Since $(u_{(m)})_{m=1}^\infty$ is weakly null, there exists $m_1\in  L $ so large that $$d(u_{(m_1)})<1/2.$$
 Then we construct by induction on the integer $n$ a strictly increasing sequence $(m_n)_{n=2}^\infty$ in  $L$ such that for every integer $n$ and  every subset $E\subset\{m_1,\cdots,m_n\},$ $$d(u_{E\smallfrown(m_{n+1})})<1/2^{n+1}.$$ Let $M=\{m_n: n=1,2,\cdots\}.$ Let $F,F_1,F_2,\cdots$ be finite subsets of $M$ such that $F<F_n,$  $F_n\ne \varnothing$ for all $n\in\nn$ and $\lim_n \max F_n=\infty.$ Next, we show that $\lim_n d(u_{F\smallfrown F_n})=0.$
 Let  $\varepsilon>0.$ Fix an integer $N$ such that $1/2^N<\varepsilon$ and an integer $n_0$ such that for any $n\geqslant n_0,$ $\max F_n>m_N.$ 
 Fix $n\geqslant n_0,$ then $F\smallfrown F_n$ is the strictly increasing sequence $(m_{i_l})_{l=1}^k.$   We have $m_{i_k}=\max F_n>m_N$.  
 So $$d(u_{F\smallfrown F_n})=d(u_{(m_{i_1},\cdots,m_{i_{k-1}},m_{i_k})})<1/2^{i_k}<1/2^N,$$ and we are done.
\end{proof}
\begin{proposition} \label{s} Fix $\xi<\omega_1$. Let $U$ be an infinite dimensional  separable  Banach space not containing an isomorphic copy of $\ell_1$. 
 \begin{enumerate}[(i)] \item $Sz(U)\leqslant \omega^\xi$ if, and only, if for any $\ee>0$,  any weakly null collection $\mathbf{u}=(u_E)_{ E\in [\nn]^{<\omega}\setminus\{\varnothing\}}$ in $ B_U$, and any $L\in[\nn]$, there exists $E\in MAX(S_\xi)\cap[L]^{<\infty}$ such that $\|\mathbb{E}_{\xi,E}\mathbf{u}\|< \ee$.   

 \item  $Sz(U)\leqslant \omega^\xi$ if and only if $\mathbf{s}_\xi(U)=0$, if and only if $\mathbf{g}_{\xi,0}(U)=0$, if and only if for every $\zeta<\omega_1$,  $\mathbf{g}_{\xi,\zeta}(U)=0$.  \end{enumerate}
\label{szlenk}
\end{proposition}

\begin{proof} $(i)$ We suppose $Sz(U)>\omega^\xi.$ As noted in the proof of Proposition \ref{s0}, there exist $\ee>0$ and a collection $\mathbf{u}=(u_E)_{ E\in \mathcal{S}_{\xi}\setminus\{\varnothing\}}$ in $ B_U$ such that \begin{enumerate}[(a)]
\item for any $E\in \mathcal{S}_{\xi}\setminus MAX(\mathcal{S}_{\xi})$, $(u_{E\smallfrown(n)})_{E<n}$ is a weakly null sequences, 
\item for each $E\in \mathcal{S}_{\xi},$ $\varepsilon\leqslant\inf\{\Vert x\Vert : x\in \text{co}(u_F: \varnothing \prec F\preceq E)\}.$     \end{enumerate}  

We extend $\mathbf{u}$ to $[\nn]^{<\infty}\setminus\{\varnothing\}$ by  $u_E=0$ for all $E\in [\nn]^{<\omega}\setminus \mathcal{S}_{\xi}.$ We continue to denote by $\mathbf{u}$ this extension that remains   a weakly null collection in $B_U$.

 For any $L\in [\nn]$ and $E\in MAX(\mathcal{S}_\xi)\cap [L]^{<\omega},$ it follows from (b) that $$\mathbb{E}_{\xi,E}\mathbf{u}\in \text{co}(u_F:\varnothing\prec F\preceq M_{\xi,1}),$$ and so $\|\mathbb{E}_{\xi,E}\mathbf{u}\|\geqslant \ee$.   By contraposition, if we suppose that for every $\ee>0$, every weakly null collection $\mathbf{u}=(u_E)_{ E\in[\nn]^{<\omega}\setminus\{\varnothing\}}$ in $ B_U$, and every $L\in[\nn]$, there exists $E\in MAX(S_\xi)\cap[L]^{<\infty}$ such that $\|\mathbb{E}_{\xi,E}\mathbf{u}\|<\ee$, then it holds that $Sz(U)\leqslant \omega^\xi$.

For the converse, assume $Sz(U)\leqslant \omega^\xi$ and that there exist $\ee>0$, a weakly null collection $\mathbf{u}=(u_E)_{ E\in [\nn]^{<\omega}\setminus\{\varnothing\}}$ in $B_U$, and $L\in[\nn]$ such that for all $E\in MAX(S_\xi)\cap[L]^{<\infty}$, $$\|\mathbb{E}_{\xi,E}\mathbf{u}\|\geqslant \ee.$$  We will show that this assumption leads to a contradiction.

Since $U$ is separable and $Sz(U)$ is countable,  it follows by  Theorem \ref{Sz=Iw}.iii  that $U^*$ is separable. Therefore by Lemma \ref{strong}, by replacing $L$ with a subset thereof and relabeling, we can assume that if $F, F_1, F_2,\ldots \in [L]^{<\omega}$ are such that $F<F_n\neq \varnothing$ for all $n\in\nn$ and $\lim_n \max F_n=\infty$, then $(u_{F\smallfrown F_n})_{n=1}^\infty$ is weakly null. 
For each $E\in MAX(S_\xi)$, fix $u^*_E\in B_{U^*}$ such that $$\text{Re\ }\langle u^*_E, \mathbb{E}_{\xi,E}\mathbf{u}\rangle = \|\mathbb{E}_{\xi,E}\mathbf{u}\|.$$  

    Define $f:\nn\times MAX(\mathcal{S}_\xi)\to [-1,1]$ by \[f(n,E) = \left\{\begin{array}{ll} \text{Re\ }\langle u^*_{E}, u_{[1,n]\cap E}\rangle & : n\in E \\ 0 & : n\in \nn\setminus E. \end{array}\right.\] 
  Then for any $E\in MAX(S_\xi)\cap[L]^{<\infty}$,$$ \sum_{ \varnothing\prec F\preceq E} p_F^\xi f(\max F, E)  = \sum_{\varnothing\prec F\preceq E} \text{Re}\langle u^*_E, p^\xi_F u_F\rangle = \text{Re\ }\langle u^*_{E}, \mathbb{E}_{\xi, E}\mathbf{u}\rangle \geqslant \ee. $$

This yields that, in the notation of Section \ref{sufficient}, $[L]\subset \mathfrak{E}(f,\mathfrak{P}, \mathcal{P}, \ee)$. By Lemmas \ref{resultado1} and \ref{resultado2},  the probability block $(\mathfrak{S}_\xi, \mathcal{S}_\xi)$  is $\xi$-regulatory. Then,  for any $0<\ee_1<\ee$, $\mathfrak{G}(f,\mathcal{P}, L,\ee_1)$ is $\xi$-full. This means there exists $N\in [L]$ such that $\mathcal{S}_\xi(N)\subset \mathfrak{G}(f,\mathcal{P}, L,\ee_1)$. In other words, for any $F\in \mathcal{S}_\xi$, $N(F)\in \mathfrak{G}(f,\mathcal{P}, L,\ee_1)$. But this means exactly that  for each $F\in \mathcal{S}_\xi$, there exists  $E\in MAX(\mathcal{S}_\xi)\cap[L]^{<\omega}$ such that for all $i\in F$, 
$$ \text{Re\ }\langle u^*_{E} , u_{[1,N(i)]\cap E}\rangle=f(N(i), E) \geqslant \ee_1.$$

For a Banach space and $\varepsilon\in \mathbb{R},$ the family $(s_\varepsilon^{(\alpha)}(B_{U^*}))_{\alpha<\omega_1}$ is introduced in section \ref{szlenk index}.

 Fix $0<\ee_2<\ee_1$.    We  claim that for any $0\leqslant \zeta\leqslant \omega^\xi$ and for any $F\in MAX(\mathcal{S}_\xi^{(\zeta)})$, there exists  $u^*\in s_{\ee_2}^{(\zeta)}(B_{U^*})$ and, if $F\neq \varnothing$, there also exists $E\in \mathcal{S}_\xi$ such that for each $i\in F$, $$N(i)\in E \ \ \hbox{and} \ \ \ee_1 \leqslant \text{Re\ }\langle u^*, u_{[1, N(i)]\cap E}\rangle.$$    We prove this by induction on $\zeta$.

 The $\zeta=0$ case is precisely the last two sentences of the preceding paragraph with $u^*=u^*_{E}$, using the fact that
 $s_{\ee_2}^{(0)}(B_{U^*})=B_{U^*}$.

The case  $\zeta$ is a limit ordinal and assume the conclusion holds for each $\mu<\zeta$.   Fix $F\in MAX(\mathcal{S}_\xi^{(\zeta)})$.

If $\zeta=\omega^\xi$, then $F=\varnothing$, and we need to show that $s_{\ee_2}^{(\omega^\xi)}(B_{U^*})\neq \varnothing$. In this case, by the inductive hypothesis, $s_{\ee_2}^{(\mu)}(B_{U^*})\neq \varnothing$ for each  $\mu<\omega^\xi$. From this and weak$^*$-compactness,  $s_{\ee_2}^{(\omega^\xi)}(B_{U^*})\neq \varnothing$.

  If $\zeta<\omega^\xi$, then $F\neq \varnothing$. In this case, for each $\mu<\zeta$, there exists $F_\mu\in MAX(\mathcal{S}_\xi^{(\mu)})$ such that $F\preceq F_\mu$.   By the inductive hypothesis, there exist $u^*_\mu\in s_{\ee_2}^{(\mu)}(B_{U^*})$ and $E_\mu\in \mathcal{S}_\xi$ such that for each $i\in F$, $$N(i)\in E_\mu \ \ \hbox{and} \ \ \text{Re\ }\langle u^*_\mu, u_{[1, N(i)]\cap E_\mu}\rangle \geqslant \ee_1.$$  Fix $\mu_n\uparrow \zeta$ and note that since  $B_{U^*}\times \mathcal{S}_\xi$ is compact metrizable, then by passing to a subsequence and relabeling, we can assume $(u^*_{\mu_n}, E_{\mu_n})_{n=1}^\infty$ is convergent to some $(u^*, E)\in B_{U^*}\times \mathcal{S}_\xi$.   By passing to a subsequence again, we can assume that $$[1, N(\max F)]\cap E_{\mu_n} = [1, N(\max F)]\cap E,$$ for all $n\in\nn$.   For each $i\in F$ and for all $n\in\nn,$    $N(i)\in E_{\mu_n}$ so $N(i)\in E.$ It follows that $$u_{[1, N(i)]\cap E}=u_{[1, N(i)]\cap E_{\mu_n}},$$ for all $n\in\nn$. 
Since $(u^*_{\mu_n})_{n=1}^\infty$ is weak$^*$-convergent to $u^*$, it holds that,  for each $i\in F,$ \[\text{Re\ }\langle u^*, u_{[1, N(i)]\cap E}\rangle = \lim_n \text{Re\ }\langle u^*_{\mu_n},  u_{[1, N(i)]\cap E}\rangle=\lim_n \text{Re\ }\langle u^*_{\mu_n},  u_{[1, N(i)]\cap E_{\mu_n}}\rangle \geqslant \ee_1.\]    
Since $u^*_{\mu_n}\in s_{\ee_2}^{(\mu_n)}(B_{U^*})$, it follows that,   $$u^*\in \bigcap_{n=1}^\infty s_{\ee_2}^{(\mu_n)}(B_{U^*})=s^{(\zeta)}_{\ee_2}(B_{U^*}).$$   This completes the limit ordinal case of the proof.

Next, assume that the result holds for some $\zeta<\omega^\xi$ and assume $F\in MAX(\mathcal{S}_\xi^{(\zeta+1)})$.   In this case, $F\smallfrown (n)\in MAX(\mathcal{S}_\xi^{(\zeta)})$ for all $n\in\nn,$ $n>F.$ By the inductive hypothesis, for each $n>F$, there exists $u^*_n \in s_{\ee_2}^{(\zeta)}(B_{U^*})$ and $E_n\in \mathcal{S}_\xi$ such that for each $i\in F\smallfrown (n)$, $$N(i)\in E_n \ \ \hbox{and} \ \  \text{Re\ }\langle u^*_n, u_{[1, N(i)]\cap E_n}\rangle \geqslant \ee_1.$$    We can select an infinite subset $R$ of $(\max F, \infty)$ such that $(u^*_n)_{n\in R}$ is weak$^*$-convergent to some $u^*\in s_{\ee_2}^{(\zeta)}(B_{U^*})$ and $E\in \mathcal{S}_\xi$ such that $([1, N(n)]\cap E_n)_{n\in R}$ converges to $E$. By passing to a further subsequence and relabeling, we can assume that for each $i\in F$ and $n\in R$, $$[1, N(i)]\cap E_n= [1, N(i)]\cap E.$$  Since $N(i)\in E_n$ for all $i\in F$ and $n\in R$, $N(i)\in  E$. From this it follows that $N(i)\in E$ for all $i\in F$.   Moreover, since $u_{[1, N(i)]\cap E_n}= u_{[1,N(i)]\cap E}$ for all $i\in F$ and $n\in R$ and since $(u^*_n)_{n\in R}$ is weak$^*$-convergent to $u^*$, it holds that 
\[\text{Re\ }\langle u^*, u_{[1, N(i)]\cap E}\rangle=\lim_n \text{Re\ }\langle u^*_n, u_{[1, N(i)]\cap E}\rangle  = \lim_n \text{Re\ }\langle u^*_n, u_{[1, N(i)]\cap E_n}\rangle \geqslant \ee_1.\]  
  
It remains to show that $u^*\in s^{(\zeta+1)}_{\ee_2}$.  We note that since $([1, N(n)]\cap E_n)_{n\in R}$ is convergent to $E$ and since $n\leqslant N(n)\in [1,N(n)]\cap E_n$ for all $n\in R$, by replacing $R$ with a subset thereof and relabeling if necessary, we can assume that for each $n\in R$, $$[1, N(n)]\cap E_n = E\smallfrown F_n,$$ for some $E<F_n\neq \varnothing$ such that $\lim_{n\in R} \max F_n=\infty$.    By our choice of $L$, $$(u_{[1, N(n)]\cap E_n})_{n\in R} = (u_{E\smallfrown F_n})_{n\in R}$$ is weakly null.   From this it follows that \begin{align*} \underset{n\in R}{\lim\inf} \|u^*-u^*_n\| & \geqslant \underset{n\in R}{\lim\inf}\ \text{Re\ }\langle u^*_n-u^*, u_{[1, N(n)]\cap E_n}\rangle \geqslant \ee_1-0>\ee_2. \end{align*}  

Combining this with the fact that $(u^*_n)_{n\in R}$ is a sequence in $ s_{\ee_2}^{(\zeta)}(B_{U^*})$, we deduce that $$u^*\in s_{\ee_2}^{(\zeta+1)}(B_{U^*}).$$   This concludes the proof by induction. Applying the $\zeta=\omega^\xi$ case yields that $s^{(\omega^\xi)}_{\ee_2}\neq \varnothing$, and $Sz(U)>\omega^\xi$. This contradiction finishes $(ii)$.

$(ii)$ If $Sz(U)\leqslant \omega^\xi$, then by $(ii)$, for any $\ee>0$, any weakly null collection $\mathbf{u}=(u_E)_{ E\in [\nn]^{<\omega}\setminus\{\varnothing\}}$ in $ B_U$, and any $L\in[\nn]$, there exists $E\in MAX(S_\xi)\cap [L]^{<\omega}$ such that $\|\mathbb{E}_{\xi,E}\mathbf{u}\|<\ee$.    Since $$\mathbb{E}_{\xi,E}\mathbf{u}= \mathbb{E}^{(2)}_{\xi,0,E}\mathbf{u},$$ it follows from Lemma \ref{diagonal} that $\mathbf{g}_{\xi,0}(U)=0$.

Next, suppose that $\mathbf{g}_{\xi,0}(U)=0$.   By Proposition \ref{die}, $\mathbf{g}_{\xi,\zeta}(U)=0$ for all $0<\zeta<\omega_1$, and therefore for all $0\leqslant \zeta<\omega_1$.

Next, suppose that $\mathbf{g}_{\xi,\zeta}(U)=0$ for all $\zeta<\omega_1$.    Then since $\mathbf{g}_{\xi,0}(U)=0$, it holds that for any  $\gamma>0$, any weakly null collection $\mathbf{u}=(u_E)_{ E\in [\nn]^{<\omega}\setminus\{\varnothing\}}$ in $ B_U$, and any $L\in[\nn]$, there exists $M\in[L]$ such that $$\|\mathbb{E}^{(2)}_{\xi,0,M}\mathbf{u}\|_1^w \leqslant \gamma.$$   Since $$\mathbb{E}^{(2)}_{\xi,0,M}\mathbf{u}= \mathbb{E}_{\xi,M}\mathbf{u},$$ it holds that $$\|\mathbb{E}_{\xi,M}\mathbf{u}\|_2^w \leqslant \|\mathbb{E}^{(2)}_{\xi,0,M}\mathbf{u}\|_1^w \leqslant \gamma.$$ This yields that $\mathbf{s}_\xi(U)=0$.

Suppose that $\mathbf{s}_\xi(U)=0$.   Then for any $\ee>0$, any weakly null collection $\mathbf{u}=(u_E)_{ E\in [\nn]^{<\omega}\{\varnothing\}}$ in $ B_U$, and any $L\in[\nn]$, there exists $M\in[\nn]$ such that $$\|\mathbb{E}_{\xi,M}\mathbf{u}\|_2^w<\ee.$$  Since $$\|\mathbb{E}_{\xi,M}\mathbf{u}(1)\|\leqslant \|\mathbb{E}_{\xi,M}\mathbf{u}\|_2^w < \ee,$$ the item $(i)$ of the proposition yields that $Sz(U)\leqslant \omega^\xi$. 
\end{proof}

\section{A fundamental result on $\mathbf{s}_\xi$ and $\mathbf{g}_{\xi,1+\zeta}$ via the Grothendieck's constant $k_G$}

The main purpose of this and the next two sections is to obtain upper bounds for $\mathbf{s}_\xi(U)$ and $\mathbf{g}_{\xi, 1+\zeta}(U)$, where $U=C_0(\omega^{\omega^\xi})\tim_\pi C_0(\omega^{\omega^\zeta})$ with  $\zeta \leq \xi< \omega_1$. First of all notice that the space $U^*$ is isomorphic to $\ell_1\tim_\varepsilon \ell_1$, so $U$ does not contain an isomorphic  copy of $\ell_1$. We start by introducing a proposition $h$ that will help us with these goals. 

\begin{definition} Let $H$ denote the set of all  ordered pairs  $(\xi, \zeta)$ of countable ordinals such that $\zeta\leqslant \xi$.   Let $k_G$ be the  Grothendieck's constant mentioned in Proposition \ref{groth}.  For each $(\xi, \zeta)\in H$, let $h(\xi,\zeta)$ be the proposition 
\begin{displaymath}
   h(\xi, \zeta) = \left\{
     \begin{array}{ll}
       \max\{\mathbf{s}_\xi(C_0(\omega^{\omega^\xi})\tim_\pi C_0(\omega^{\omega^\zeta})), \mathbf{g}_{\xi, 1+\zeta}(C_0(\omega^{\omega^\xi})\tim_\pi C_0(\omega^{\omega^\zeta}))\}  \leqslant k_G & :  \zeta<\xi\\
			\max\{\mathbf{s}_\xi(C_0(\omega^{\omega^\xi})\tim_\pi C_0(\omega^{\omega^\xi})), \mathbf{g}_{\xi, 1+\zeta}(C_0(\omega^{\omega^\xi})\tim_\pi C_0(\omega^{\omega^\xi}))\} \leqslant 2k_G	&	:	 \zeta=\xi.
     \end{array}
   \right.
\end{displaymath} 

For $ \xi<\omega_1$,  let $h(\xi)$ be the proposition that $h(\xi, \zeta)$ holds for all $\zeta\leqslant \xi$.   Let $h$ be the proposition that $h(\xi)$ holds for all $\xi<\omega_1$.    
\end{definition}

So,  we will prove the following crucial result on the functionals  $\mathbf{s}_\xi$ and $\mathbf{g}_{\xi, 1+\zeta}$ with $\zeta \leq \xi< \omega_1$.

\begin{theorem} The proposition $h$ holds. 

\label{positive one}
\end{theorem}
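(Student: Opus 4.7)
The plan is a transfinite induction on $\xi$ implementing the geometric strategy of Section~2. In the base case $\xi = 0$, where $U = c_0 \tim_\pi c_0$ (or $c_0 \tim_\pi \mathbb{K}$), weak nullity of $(u_E)$ together with finite-dimensionality of rectangular subspaces allows one, after a small perturbation, to extract a subsequence $(u_{M(n)}) = (\mathbb{E}_{0,M}\mathfrak{u}(n))$ with supports conforming to Figure~3. Decomposing as in Figure~4 into two pieces of Figure~1/2 type and applying Proposition~\ref{groth} on each --- extended from elementary tensors to general tensors using that ordinal-product rectangles cut out $1$-complemented subspaces of $U$ and that elements of $C(K)\tim_\pi C(L)$ are norm limits of convex combinations of elementary tensors --- gives $\|(u_{M(n)})\|_2^w \leqslant 2k_G$ (and only $k_G$ in the asymmetric case $\zeta < \xi$, where one Figure~4 piece is absent). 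This yields $\mathfrak{s}_0(U) \leqslant 2k_G$. A second layer of blocking, with $\ell_2^+$ coefficients of complexity $1+\zeta$ arranged (Figures~8--11) so that the resulting blocks have pairwise disjoint rectangular supports as in Figure~7, then gives $\mathfrak{g}_{0,1+\zeta}(U) \leqslant 2k_G$ by Proposition~\ref{haig}(i), extended the same way; the $\ell_2^+$ blocks remain norm-bounded by the already-established $\mathfrak{s}$-bound, per the Remark following Lemma~\ref{diagonal}.

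For the inductive step, assume $h(\eta)$ for all $\eta < \xi$. The crucial consequence is that $Sz(C(\alpha)\tim_\pi C(\beta)) \leqslant \omega^\xi$ for every $\alpha < \omega^{\omega^\xi}$ and $\beta < \omega^{\omega^\zeta}$ with $\zeta\leqslant\xi$: by Bessaga--Pe{\l}czy\'nski, $C(\alpha) \cong C(\omega^{\omega^{\xi'}})$ and $C(\beta) \cong C(\omega^{\omega^{\zeta'}})$ for some countable $\xi',\zeta' < \xi$, so the inductive $\mathfrak{g}_{\max(\xi',\zeta'),\,\cdot}$-bound combined with Lemma~\ref{szlenk} gives the required Szlenk estimate. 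Lemma~\ref{szlenk}(ii) then supplies the operational tool for the recursion: in the unit ball of any such rectangular subspace, and given any $L\in[\nn]$ and $\varepsilon>0$, some $M\in[L]$ makes $\mathbb{E}_{\xi, M}\mathfrak{u}(1)$ of norm less than $\varepsilon$.

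The recursion now mirrors the base case. Given $\mathfrak{u}$, $L$, $\varepsilon$, first pass --- via Proposition~\ref{strong}, applicable since $U^*$ is separable once the Szlenk bound for subrectangles is established --- to a subtree with strong weak nullity along tail-branches. Then recursively select successive maximal $\mathcal{S}_\xi$-blocks $E_n$ and ordinals $\alpha_n \uparrow \omega^{\omega^\xi}$, $\beta_n \uparrow \omega^{\omega^\zeta}$ so that $w_n := \mathbb{E}_{\xi, M}\mathfrak{u}(n)$ is $(\varepsilon/2^n)$-small on the rectangle $[0,\alpha_n]\times[0,\beta_n]$ of Figure~5, using the Szlenk-vanishing statement above applied to the $1$-complemented subspace $C([0,\alpha_n])\tim_\pi C([0,\beta_n])$ of $U$, while strong weak nullity confines the remainder of $w_n$ to the L-shaped tail region of Figure~6. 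The resulting $(w_n)$ has Figure~3 supports, and the Figure~4 decomposition together with Proposition~\ref{groth} yields $\mathfrak{s}_\xi(U) \leqslant 2k_G$ (or $k_G$ in the asymmetric case). A second, analogous recursion performs the $\ell_2^+$ blocking of complexity $1+\zeta$ with the Figure~11 selection --- using Szlenk vanishing on the vertical strip of Figure~8, the horizontal strip of Figure~9, and hence their union of Figure~10 --- producing disjoint rectangular supports of Figure~7 type; Proposition~\ref{haig}(i) then gives $\mathfrak{g}_{\xi,1+\zeta}(U) \leqslant 2k_G$.

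The main technical obstacle will be the limit ordinal case of $\xi$. There, $\mathcal{S}_\xi$ is defined via a characteristic sequence $\xi_n \uparrow \xi$, and the actual complexity of $\mathbb{E}_{\xi, M}\mathfrak{u}(n)$ is $\xi_{\min M_n}+1$, varying with the block's minimum; one must combine the infinite-Ramsey stabilization of Proposition~\ref{hereditary} with the characteristic-sequence diagonalization to arrange a single $M$ that simultaneously controls the Szlenk vanishing on every rectangle chosen during the recursion. A subsidiary point is that the same $M$ must support \emph{both} the $\mathfrak{s}_\xi$-selection of Figure~6 and the subsequent $\mathfrak{g}_{\xi,1+\zeta}$-selection of Figure~11, so in practice the two recursions must be interleaved at each step rather than performed sequentially, and the bookkeeping of the three independent vanishing conditions (rectangle of Figure~5, vertical strip of Figure~8, horizontal strip of Figure~9) requires care.
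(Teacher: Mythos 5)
Your coarse ($\mathfrak{s}_\xi$) part follows the paper's scheme closely: the Szlenk bound $Sz(C(\alpha)\tim_\pi C(\beta))\leqslant\omega^\xi$ for $\alpha,\beta<\omega^{\omega^\xi}$ from the hypotheses $h(\eta)$, $\eta<\xi$, the recursive block selection with approximation by elementary tensors and projections, and the Figure~3/4 decomposition plus Proposition~\ref{groth}. But the fine ($\mathfrak{g}_{\xi,1+\zeta}$) part has a genuine gap. You run the induction on $\xi$ alone and justify the Figure~11 selection by ``Szlenk vanishing on \dots the horizontal strip of Figure~9.'' That strip is (a $1$-complemented copy of) $C_0(\omega^{\omega^\xi})\tim_\pi C(\beta)$ with $\beta<\omega^{\omega^\zeta}$; it contains a complemented copy of $C_0(\omega^{\omega^\xi})$, so its Szlenk index is $\omega^{\xi+1}$, not $\leqslant\omega^\xi$, and vanishing of complexity-$\xi$ $\ell_1^+$ averages in the sense of Lemma~\ref{szlenk}(ii) simply fails there. (When $\zeta=\xi$ the same objection applies to the vertical strip of Figure~8 as well; the Claim-1-type Szlenk bound only covers strips in which \emph{both} coordinates are cut below $\omega^{\omega^\xi}$, or the full vertical strip when $\zeta<\xi$.) The failure is not merely a missing justification: if one could make the complexity-$\xi$ $\ell_1^+$ averages small on both strips, one would already obtain Figure~7 supports without any $\ell_2^+$ blocking, hence $\mathfrak{g}_{\xi,0}(U)<\infty$, and Lemma~\ref{die} would then force $\mathfrak{g}_{\xi,1+\zeta}(U)=0$, contradicting Theorem~\ref{negative one}.

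What is missing is the inner induction on $\zeta$: the paper proves $h(\xi,\zeta)$ assuming $h(\xi,\nu)$ for all $\nu<\zeta$ in addition to $h(\eta)$ for $\eta<\xi$, and it controls the horizontal strip not by a Szlenk estimate at level $\xi$ but by the finer invariant itself (Claim~2 of the paper). Concretely, $C(\beta)$ is either finite dimensional, in which case Proposition~\ref{knight} gives $\mathfrak{g}_{\xi,0}(C_0(\omega^{\omega^\xi})\tim_\pi C(\beta))<\infty$, or isomorphic to $C_0(\omega^{\omega^\nu})$ with $\nu<\zeta$, in which case the inner hypothesis $h(\xi,\nu)$ gives $\mathfrak{g}_{\xi,1+\nu}<\infty$; in both cases the index $\mu$ obtained is $<1+\zeta$, and Lemma~\ref{die} (which also uses $\mathfrak{s}_\xi<\infty$) then makes the $(1+\zeta)$-level $\ell_2^+$ combinations vanish on that strip. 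Without this two-parameter (lexicographic) induction the recursion producing Figure~11, and hence the bound $\mathfrak{g}_{\xi,1+\zeta}(U)\leqslant 2k_G$, cannot be carried out. Two minor remarks: the interleaving you anticipate is unnecessary, since $\mathfrak{s}_\xi(U)\leqslant\kappa$ is quantified over all weakly null collections and is stabilized over all $M\in[L]$ via Proposition~\ref{hereditary} before the second recursion; and Proposition~\ref{strong} is not invoked directly in this argument (it is used only inside Lemma~\ref{szlenk}).
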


The proof of Theorem \ref{positive one} in the case $\xi=0$   follows immediately from  Theorem \ref{JFA}  and Proposition \ref{s}. We will complete the proof of Theorem  \ref{positive one} by induction on $\xi < \omega_1$ and will do it in the next two sections. Here we will only prepare this proof by establishing three lemmas related to the definition of proposition $h$.

From now on, for a compact, Hausdorff space $K$ and $f\in C(K)$, we let \[\supp(f)=\overline{\{\varpi\in K: f(\varpi)\neq 0\}}.\]    For convenience, we define $(-1, \beta]=[0,\beta]$ for any ordinal $\beta$.

\begin{lemma}For $\xi<\omega_1$ and $\beta<\omega$, $\mathbf{g}_{\xi, 0}(C_0(\omega^{\omega^\xi})\tim_\pi C(\beta))<\infty$. 
\label{knight}\end{lemma}
\begin{proof}
The spaces $C_0(\omega^{\omega^\xi})\tim_\pi C(\beta)$ and $C_0(\omega^{\omega^\xi})$ are isomorphic, so   by Corollary \ref{finite}  it is sufficient to prove that $\mathbf{g}_{\xi,0}(C_0(\omega^{\omega^\xi}))<\infty.$ In fact, we will show that $\mathbf{g}_{\xi,0}(C_0(\omega^{\omega^\xi}))\leqslant 1.$

Let $\mathbf{u}=(u_E)_{ E\in[\nn]^{<\omega}\setminus\{\varnothing\}}$ be a weakly null collection in $ B_{C_0(\omega^{\omega^\xi})}.$  Fix $L\in[\nn] $ and $\ee>0$. We construct by induction a sequence
 $(E_n)_{n=1}^\infty$ of successive elements in $MAX(S_\xi)\cap [L]^{<\omega},$ 
a strictly increasing sequence $(\beta_n)_{n=0}^\infty$ in $(-1,\omega^{\omega^\xi}),$ ($\beta_0=-1$)
and a sequence $(g_n)_{n=1}^\infty$ in $B_{C_0(\omega^{\omega^\xi})}$ such that $\text{supp}(g_n)\subset (\beta_{n-1},\beta_n]$ and  if we denote $\Delta_n=\bigcup_{i=1}^nE_i$ then
$$\left\Vert g_n-\sum_{\Delta_{n-1}\prec E\preceq\Delta_n}p^\xi_Eu_E\right\Vert<\varepsilon/2^n.
$$

Let $E_1\in MAX(S_\xi)\cap[L]^{<\omega}.$ We can select $\beta_1<\omega^{\omega^\xi}$ such that, for each $\gamma>\beta_1,$
 $$ 
\vert \mathbb{E}_{\xi,E_1}\mathbf{u}(\gamma)\vert=\left\vert \sum_{\varnothing\prec E\preceq E_1}p^\xi_E
u_E(\gamma)\right\vert <\varepsilon/2.$$  We denote by $g_1$ the function   that coincides with $\mathbb{E}_{\xi,E_1}\mathbf{u}$ on $[0,\beta_1]$ and equals to 0 on $(\beta_1,\omega^{\omega^\xi}).$

Next, assume that $\beta_1<\ldots <\beta_n$, $E_1,\ldots,<E_n\in MAX(S_\xi)\bigcap [L]^\omega,$ $E_1<\ldots<E_n$ have been chosen.

We define $\mathbf{v}=(v_E)_{ E\in[\nn]^{<\omega}\setminus\{\varnothing\}}$ a weakly null collection in $B_{C(\beta_n)}$ by letting $ v_E= u_{\Delta_n\cup E}\vert_{[0, \beta_n]}$ if $\Delta_n<E$ and $v_E=0$ otherwise.   Let $L_{n+1}=L\cap (\max \Delta_n,\infty).$ Since $Sz(C(\beta_n)) \leqslant\omega^\xi$, $\mathbf{g}_{\xi,0}(C(\beta_n))=0$ so, by Proposition~\ref{szlenk},    we can select $E_{n+1}\in MAX(S_\xi)\cap [L_{n+1}]^{<\omega}$ such that $$\Vert\mathbb{E}_{\xi,E_{n+1}}\mathbf{v}\Vert=\left\Vert
\sum_{\Delta_n\prec E\preceq \Delta_{n+1}}p^\xi_E{u_E}_{\vert[0,\beta_n]}
\right\Vert
<\varepsilon/2^{n+1}.$$ Select $\beta_{n+1}\in (\beta_n,\omega^{\omega^\xi})$ such that  $$\left\vert
\sum_{\Delta_n\prec E\preceq \Delta_{n+1}}p^\xi_Eu_E(\gamma)
\right\vert<\varepsilon/2^{n+1},$$ for each $\gamma>\beta_{n+1}$.  We denote by    $g_{n+1}\in B_{C_0(\omega^{\omega^\xi})}$ the function such that $g_{n+1}=0$ on $[0,\beta_n]\cup (\beta_{n+1}, \omega^{\omega^\xi})$ and 
$g_{n+1}=\sum_{\Delta_n\prec E\preceq \Delta_{n+1}}p^\xi_Eu_E$ on $(\beta_n,\beta_{n+1}].$
This completes the recursive construction. We let $M=\cup_{n-1}^\infty E_n.$  We note that each integer $n,$ $\mathbb{E}_{\xi,M}\mathbf{u}(n)=\sum_{\Delta_{n-1}\prec E\preceq \Delta_{n}}p^\xi_Eu_E$ and so $\Vert g_n-\mathbb{E}_{\xi,M}\mathbf{u}(n)\Vert<\varepsilon/2^n.$  We note  also that for $n=1,2,\ldots$ the functions $g_n\in B_{C_0(\omega^{\omega^\xi})}$ have pairwise disjoint supports so $\Vert(g_n)_{n=1}^\infty\Vert_1^w\leqslant 1,$ it follows that
$$\Vert \mathbb{E}_{\xi,M}\mathbf{u}\Vert_1^w\leqslant \Vert(g_n)_{n=1}^\infty\Vert_1^w+\sum_{n=1}^\infty\Vert g_n-\mathbb{E}_{\xi,M}\mathbf{u}(n)\Vert<1+\varepsilon.
$$
From this, it follows that $\mathbf{g}_{\xi,0}(C_0(\omega^{\omega^\xi}))\leqslant1.$
\end{proof}

We now isolate the following consequence of the hypotheses $h(\xi)$, $\xi<\omega_1$.

\begin{lemma}\label{claim1} Suppose that $\xi<\omega_1$ is such that $h(\eta)$ holds for all $\eta<\xi$.    Then for any $\alpha, \beta<\omega^{\omega^\xi}$, $Sz(C(\alpha)\tim_\pi C(\beta))\leqslant \omega^\xi$.
\end{lemma}  

\begin{proof} Without loss of generality, assume $\beta\leqslant \alpha$. Since $C( \beta)\widehat{\otimes}_\pi C(\alpha)$ is isomorphic to a subspace of of $C(\alpha)\widehat{\otimes}_\pi C(\alpha)$ and 
$Sz(C( \beta)\widehat{\otimes}_\pi C(\alpha))\leq Sz(C(\alpha)\widehat{\otimes}_\pi C(\alpha)),$
it is sufficient to show that $Sz(C(\alpha)\widehat{\otimes}_\pi C(\alpha))\leqslant \omega^\xi$ for each $\alpha < \omega^{\omega^\xi}$.

If $\xi=0$, then $\alpha<\omega$ and $C(\alpha)\widehat{\otimes}_\pi C(\alpha)$ is finite dimensional and has Szlenk index $1=\omega^0\leqslant \omega^\xi$.   

 Now we suppose $0<\xi<\omega_1$  and $\omega\leqslant\alpha < \omega^{\omega^\xi},$ then for some $\eta<\xi$, $C(\alpha)$ is  isomorphic to $C_0(\omega^{\omega^\eta})$ \cite[Theorem 2.14]{Rosenthal}. 
Therefore $C(\alpha)\tim_\pi C(\alpha)$ is  isomorphic to   $C_0(\omega^{\omega^\eta})\tim_\pi C_0(\omega^{\omega^\eta})$. 
Since we have assumed   $h(\eta, \eta)$ holds, it follows that  $$\mathbf{s} _\eta (C_0(\omega^{\omega^\eta})\tim_\pi C_0(\omega^{\omega^\eta})) \leqslant 2k_G<\infty.$$     It follows also from the proof of Proposition \ref{die} that    
   $$\mathbf{s} _{\eta+1}(C_0(\omega^{\omega^\eta})\tim_\pi C_0(\omega^{\omega^\eta}))=0.$$   Since the Szlenk index is an isomorphic invariant,  and by  Proposition \ref{szlenk}, $$Sz(C(\beta)\tim_\pi C(\alpha))\leq  Sz(C_0(\omega^{\omega^\eta})\tim_\pi C_0(\omega^{\omega^\eta})) \leqslant \omega^{\eta+1}\leqslant \omega^\xi.$$      
   This yields the claim if $0<\xi<\omega_1$.  
\end{proof}

We next will prove a result analogous to  Lemma \ref{knight}  that will be used in section \ref{positive}  when  we will be working with the functionals $\mathbf{g}_{\xi,\zeta}$.

For an ordinal $\psi<\omega^{\omega^\lambda}$ and $f\in C_0(\omega^{\omega^\lambda})$ we let $R_\psi f=f\vert_{[0,\psi]}$ the restriction operator.  We let $P_\psi$ be the operator from $C_0(\omega^{\omega^\lambda})$ to itself given by letting $P_ \psi f(\theta)=f(\theta) $ for $\theta\leqslant\alpha$ and $P_ \psi f(\theta)=0$ for $\theta>\psi.$
\begin{lemma} \label{claim2} Assume that $\zeta\leqslant \xi<\omega_1$ are such that $h(\xi, \nu)$ holds for all $\nu<\zeta$ and  $h(\eta)$ holds for any $\eta<\xi$,  then    for any $\alpha<\omega^{\omega^\xi}$, any  $\beta<\omega^{\omega^\zeta}$, any $\ee>0$, any weakly null collection  $\mathbf{v}=(v_E)_{ E\in[\nn]^{<\omega}\setminus\{\varnothing\}}$ in $ B_{C_0(\omega^{\omega^\xi})\tim_\pi C_0(\omega^{\omega^\zeta})}$, and any $L\in[\nn]$, there exists $E\in MAX(S_{1+\zeta}[S_\xi])\bigcap[ L]^{<\omega}$ such that \[\|\mathbb{E}^{(2)}_{\xi,1+\zeta,E}\mathbf{v}-((I-P_{\alpha})\otimes (I-P_\beta))\mathbb{E}^{(2)}_{\xi,1+\zeta,E}\mathbf{v}\|<\ee.\] 
\end{lemma}

\begin{proof}   Fix $\alpha<\omega^{\omega^\xi}, \beta<\omega^{\omega^\zeta}.$ Fix $\varepsilon>0,$ a weakly null collection $\mathbf{v}=(v_E)_{ E\in[\nn]^{<\omega}\setminus\{\varnothing\}}$ in $ B_{C_0(\omega^{\omega^\xi})\tim_\pi C_0(\omega^{\omega^\zeta})}$, and   $L\in[\nn].$

Since $\beta< \omega^{\omega^\zeta},$ we claim that there exists $\mu<1+\zeta$ such that $\mathbf{g}_{\xi,\mu}(C_0(\omega^{\omega^\xi})\widehat{\otimes}_\pi C(\beta))<\infty$.

 If $\beta$ is finite, then by Lemma \ref{knight},       $\mathbf{g}_{\xi, 0}(C_0(\omega^{\omega^\xi})\tim_\pi C(\beta))<\infty$. In this case, let $\mu=0<1+\zeta$  is convenient.   If $\omega\leqslant\beta<\omega^{\omega^\zeta},$ then $C(\beta)$ is isomorphic to $C_0(\omega^{\omega^\nu})$ for some $\nu<\zeta$.    In this case, $C_0(\omega^{\omega^\xi})\tim_\pi C(\beta)$ is isomorphic to $C_0(\omega^{\omega^\xi})\tim_\pi C_0(\omega^{\omega^\nu})$. Since $h(\xi, \nu)$ is assumed to hold, it follows that $\mathbf{g}_{\xi,1+\nu}(C_0(\omega^{\omega^\xi})\tim_\pi C_0(\omega^{\omega^{\nu}}))<\infty$, and therefore $\mathbf{g}_{\xi,1+\nu}(C_0(\omega^{\omega^\xi})\tim_\pi C(\beta))<\infty$.     In this case,  $\mu=1+\nu$ is convenient.

 We claim that there exists $M\in [L]$ such that, for each $E\in MAX(S_{1+\zeta}[S_\xi])\cap [M]^{<\omega},$ 
\begin{equation}
\Vert  (R_\alpha\otimes I)\mathbb{E}^{(2)}_{\xi,1+\zeta,E}\mathbf{v}\Vert<\varepsilon/2\text{ and }\Vert(I\otimes R_\beta)\mathbb{E}^{(2)}_{\xi,1+\zeta,E}\mathbf{v}\Vert<\varepsilon/2
.
\end{equation}

$(I\otimes R_\beta)\mathbf{v} = ((I\otimes R_\beta)v_E)_{E\in[\nn]^{<\omega}\setminus\{\varnothing\}} $  is a weakly null collection in $B_{C_0(\omega^{\omega^\xi})\tim_\pi C(\beta)}$, so
it follows from  Proposition \ref{die}, $\mathbf{g}_{\xi,1+\zeta}=0,$  
and there exists $M_1\in[L]$ such that for all $E\in MAX(S_{1+\zeta}[S_\xi])\bigcap[M_1]^{<\omega}$, 
$$\|(I\otimes R_\beta)\mathbb{E}_{\xi,1+\zeta, E}^{(2)}\mathbf{v}\|<\ee/2.$$ 
  $(R_\alpha\otimes I)\mathbf{v}=( (R_\alpha\otimes I)v_E)_{E\in[\nn]^{<\omega}\setminus\{\varnothing\}}$ is a weakly null collection in $B_{C(\alpha)\widehat{\otimes}_\pi C(\omega^{\omega^\zeta})}$

If $\xi=\zeta$, by symmetry, we can apply the same argument to deduce    there exists $M\in[ M_1]$ such that for all $E\in MAX(S_{1+\zeta}[S_\xi])\bigcap[M]^{<\omega}$, $$\|(R_\alpha\otimes I)\mathbb{E}_{\xi,1+\zeta, E}^{(2)}\mathbf{v}\|<\ee/2.$$

If $\zeta<\xi$, then, by Lemma \ref{claim1}, $Sz(C(\alpha)\tim_\pi C_0(\omega^{\omega^\zeta}))\leqslant \omega^\xi$  which implies that $\mathbf{g}_{\xi,0}(C(\alpha)\tim_\pi C_0(\omega^{\omega^\zeta}))=0$.  By Proposition \ref{die}, there exists $M\in[M_1]$ such that for all $E\in MAX(S_{1+\zeta}[S_\xi])\bigcap[M]^{<\omega}$, $$\|(R_\alpha\otimes I)\mathbb{E}_{\xi,1+\zeta, E}^{(2)}\mathbf{v}\|<\ee/2.$$  and so $(1)$ is proven.
 
    Note that \[\| (I\otimes P_\beta)\mathbb{E}^{(2)}_{\xi, 1+\zeta, E}\mathbf{v}\|=\| (I\otimes R_\beta)\mathbb{E}^{(2)}_{\xi, 1+\zeta, E}\mathbf{v}\|<\ee/2\] and \[\|(P_\alpha\otimes I) \mathbb{E}^{(2)}_{\xi, 1+\zeta, E}\mathbf{v}\|=\|(R_\alpha\otimes I) \mathbb{E}^{(2)}_{\xi, 1+\zeta, E}\mathbf{v}\|<\ee/2.\]   
Therefore \begin{align*} \|\mathbb{E}_{\xi,1+\zeta,E}^{(2)}\mathbf{v}-((I-P_{\alpha})\otimes (I-P_\beta))\mathbb{E}^{(2)}_{\xi,1+\zeta,E}\mathbf{v}\| & \leqslant  \|(P_\alpha\otimes (I-P_\beta))\mathbb{E}_{\xi,1+\zeta,E}^{(2)}\mathbf{v}\| \\ & \phantom{\leqslant}+ \|(I\otimes P_\beta)\mathbb{E}_{\xi,1+\zeta,E}^{(2)}\mathbf{v} \| \\ & \leqslant  \|(I\otimes (I-P_\beta))(P_\alpha\otimes I)\mathbb{E}_{\xi,1+\zeta,E}^{(2)}\mathbf{v}\| \\ & \phantom{\leqslant} + \|(I\otimes P_\beta)\mathbb{E}_{\xi,1+\zeta,E}^{(2)}\mathbf{v} \| \\ & = \|(R_\alpha\otimes I)\mathbb{E}_{\xi,1+\zeta,E}^{(2)}\mathbf{v}\| \\ & \phantom{\leqslant} + \|(I\otimes R_\beta)\mathbb{E}_{\xi,1+\zeta,E}^{(2)}\mathbf{v} \| <\ee/2+\ee/2=\ee. \end{align*} 
This finishes the proof of the lemma. 
\end{proof}

\section{On upper bounds  of  $\mathbf{s}_\xi(U)$ where $U=C(\omega^{\omega^\xi})\tim_\pi C(\omega^{\omega^\zeta})$ and  $\zeta \leq \xi< \omega_1$}           \label{positiveS}

With the notations from the previous section, in this section we show that under the induction hypothesis on $\xi< \omega_1$, we conclude that $\mathbf{s}_\xi(U) \leq k_G$ if $\zeta< \xi$ (Proposition \ref{P1})  and that $\mathbf{s}_\xi(U) \leqslant 2k_G$ if $\zeta= \xi$ (Proposition \ref{P2}).

\begin{proposition} \label{P1} Suppose that $h(\eta)$ holds for all $\eta<\xi$ and $\zeta< \xi$. Then $\mathbf{s}_\xi(U) \leq k_G.$
\end{proposition}

\begin{proof} Fix $\varepsilon>0$   
and assume that $h(\eta)$ holds for all $\eta<\xi.$ We  will  show that $\mathbf{s}_{\xi}(U)\leqslant k_G+\varepsilon.$ To this end, fix a weakly null collection $\mathbf{u}=(u_E)_{ E\in[\nn]^{<\omega}\setminus\{\varnothing\}}$ in $B_U$  and $L\in[\nn].$
 For an ordinal $\alpha$, we let $R_\alpha f=f|_{[0, \alpha]}$ the restriction to $[0,\alpha]$ so $R_\alpha$ is an operator into $C(\alpha)$.    By an abuse of notation, we use this notation both for the restriction operators on both $C_0(\omega^{\omega^\xi})$ and $C_0(\omega^{\omega^\zeta})$. We let $P_\alpha$ be the operator (either from $C_0(\omega^{\omega^\xi})$ to itself or from $C_0(\omega^{\omega^\zeta})$ to itself) given by letting $P_\alpha f(\varpi)=f(\varpi)$ for $\varpi\leqslant \alpha$ and $P_\alpha f(\varpi)=0$ for $\varpi>\alpha$.

We first consider the case $\zeta<\xi$.          Let $\alpha_0=-1$.     Let   $E_1\in MAX(S_\xi)\cap [L]^{<\infty}.$        There exist  finite sets, $F_1\subset B_{C_0(\omega^{\omega^\xi})}$, $G_1\subset B_{C_0(\omega^{\omega^\zeta})}$ and \[v_1\in \text{co}\{f\otimes g: (f,g)\in F_1\times G_1\}\] \cite[Proposition 2.2.]{Ryan}, such that \[\|v_1-\mathbb{E}_{\xi,E_1}\mathbf{u}\|<\ee/2.\] By replacing the members of $F_1$ with projections thereof, we can assume there exists $\alpha_1<\omega^{\omega^\xi}$ such that $\supp(f)\subset (\alpha_0, \alpha_1]$ for all $f\in F_1$.    Next, assume $E_1<\ldots <E_n$, $$E_i\in MAX(S_\xi)\cap [L\cap (\max E_{i-1},\infty )]^{<\infty},$$ for $1< i\leq n,$ $\alpha_0<\ldots <\alpha_n<\omega^{\omega^\xi}$, $F_1, \ldots, F_n$, and $G_1, \ldots, G_n$ have been chosen.  We denote $\Delta_n=E_1\cup\ldots\cup E_n.$ We denote $w_n=\sum_{\Delta_{n-1}\prec F\preceq \Delta_n}p^\xi_Fu_F.$ Since $\alpha_n<\omega^{\omega^\xi}$, we deduce from Lemma \ref{claim1} that $C(\alpha_n)\widehat{\otimes}_\pi C_0(\omega^{\omega^\zeta})$ has Szlenk index not exceeding $\omega^\xi$.    Define  the collection  
$\mathbf{v}=(v_E)_{ E\in[\nn]^{<\omega}\setminus\{\varnothing\}} \ \ \hbox{in} \ \  B_{C(\alpha_n)\tim_\pi C_0(\omega^{\omega^\zeta})},$  by letting $v_E=(R_{\alpha_n}\otimes I )u_{\Delta_n\cup E}$ if $E_n<E$ and $v_E=0$ otherwise.   The  collection $\mathbf{v}$ is weakly null.  Since $C(\alpha_n)\tim_\pi C_0(\omega^{\omega^\zeta})$ has Szlenk index not exceeding $\omega^\xi$, there exists $$E_{n+1}\in MAX(S_\xi)\cap [L\cap (\max E_n,\infty )]^{<\infty},$$   such that  
$$\|\mathbb{E}_{\xi, E_{n+1}}\mathbf{v}\|=\Vert (R_{\alpha_n}\otimes I)w_{n+1}\Vert =\left\Vert (R_{\alpha_n}\otimes I)\sum_{\Delta_n\prec F\preceq \Delta_n\cup E_{n+1}}p^\xi_Fu_F \right\Vert< \ee/3\cdot 2^{n+1}.$$   Define \[\Delta_{n+1}=E_1\cup\ldots \cup E_n\cup E_{n+1}=\Delta_n\cup E_{n+1}\] and \[w_{n+1}=\sum_{\Delta_n\prec F\preceq \Delta_{n+1}} p^\xi_F u_F.\]

We can choose finite sets $F'\subset B_{C_0(\omega^{\omega^\xi})}$, 
 $G_{n+1}\subset B_{C_0(\omega^{}\omega^\zeta)}$ and \[v'\in\text{co}\{f\otimes g: (f,g)\in F'\times G_{n+1}\}\] such that 
$$\Vert v'-w_{n+1}\Vert<\varepsilon/3\cdot 2^{n+1}.
 $$
   By replacing the members of $F'$ with small perturbations thereof, we can assume there exists $\alpha_{n+1}\in (\alpha_n, \omega^{\omega^\xi})$ such that $$\supp(f)\subset [0, \alpha_{n+1}],$$ for all $f\in F'$.     Define \[F_{n+1}=\{(I-P_{\alpha_n})f: f\in F'\}\subset B_{C_0(\omega^{\omega^\xi})}\] and \[v_{n+1}=((I-P_{\alpha_n})\otimes I)v'.\]   Note that $$\supp(f)\subset (\alpha_n, \alpha_{n+1}],$$ for all $f\in F_{n+1}$.  Note also that \begin{align*}
  \|v_{n+1}-w_{n+1}\| & = \|v'-w_{n+1}- (P_{\alpha_n}\otimes I )v'\| \\
 & \leqslant \|v'-w_{n+1} - (P_{\alpha_n}\otimes I )(v'-w_{n+1}\|+\|(P_{\alpha_n}\otimes I)w_{n+1}\| \\ 
 & = \|v'-w_{n+1} - (P_{\alpha_n}\otimes I )(v'-w_{n+1})\|+\|(R_{\alpha_n}\otimes I)w_{n+1}\| \\ 
 & \leqslant 2\|v'-w_{n+1} \|+\ee/3\cdot 2^{n+1} \\ & <\ee/2^{n+1}.\end{align*}  
 
 This completes our recursive construction.  Let $M=\cup_{n=1}^\infty E_n.$   For each integer $n,$ $w_n=\mathbb{E}_{\xi,M}\mathbf{u}(n)$ and $\Vert v_n-\mathbb{E}_{\xi,M}\mathbf{u}(n)\Vert<\varepsilon/2^n.$  Therefore we can deduce that $$\|\mathbb{E}_{\xi,M}\mathbf{u}\|_2^w \leqslant k_G+\ee,$$ by showing that $$\|(v_n)_{n=1}^\infty\|_2^w\leqslant k_G.$$  Since $v_n\in \text{co}\{f\otimes g: (f,g)\in F_n\times G_n\}$ for all $n\in\nn$, by the triangle inequality, we only need to observe that $$\|(f_n\otimes g_n)_{n=1}^\infty \|_2^w \leqslant k_G, \ \ \hbox{for all} \ \  (f_n,g_n)_{n=1}^\infty\in \prod_{n=1}^\infty F_n\times G_n.$$ For this, we apply  Proposition \ref{groth}. Here we note that for any $(f_n)_{n=1}^\infty \in \prod_{n=1}^\infty F_n$, $$\|(f_n)_{n=1}^\infty\|_2^w \leqslant \|(f_n)_{n=1}^\infty\|_1^w \leqslant 1,$$ since $(f_n)_{n=1}^\infty$ have pairwise disjoint supports.    This shows that $\mathbf{s}_\xi(U) \leqslant k_G$.    
 \end{proof}

\begin{proposition} \label{P2} Suppose that $h(\eta)$ holds for all $\eta<\xi$ and $\zeta= \xi$. Then $\mathbf{s}_\xi(U) \leqslant 2k_G.$
\end{proposition}
\begin{proof}
Fix $\varepsilon>0$   
and assume that $h(\eta)$ holds for all $\eta<\xi.$ We  will  show that $\mathbf{s}_{\xi}(U)\leqslant 2k_G+\varepsilon.$   
Fix a weakly null collection $\mathbf{u}=(u_E)_{ E\in[\nn]^{<\omega}\setminus\{\varnothing\}}$  in $B_U$  and $L\in[\nn].$

 The proof proceeds as in the previous proposition, except that we recursively choose $E_1 < E_2< \ldots$,  $-1=\alpha_0<\alpha_1<\ldots$, $F_n, \Phi_n, G_n, \Gamma_n\subset B_{C_0(\omega^{\omega^\xi})}$, and \[v_n\in\text{co}\{f\otimes g:(f,g)\in F_n\times G_n\},\] \[w_n\in \text{co}\{f\otimes g:(f,g)\in \Phi_n\times \Gamma_n\}\] such that, with $$M=\bigcup_{n=1}^\infty E_n,$$ $$\|v_n+w_n- \mathbb{E}_{\xi,M}\mathbf{u}(n)\|<\ee/2^n,$$ for all $n\in\nn$, and such that $\supp(f)\subset (\alpha_{n-1}, \alpha_n],$ for all $n\in\nn$ and $f\in F_n\cup \Gamma_n$.    From here, we argue as in the previous proposition to deduce that $$\|(f_n\otimes g_n)_{n=1}^\infty\|_2^w \leqslant k_G, \ \ \hbox{for all} \ \ (f_n,g_n)_{n=1}^\infty \in \prod_{n=1}^\infty F_n\times G_n,$$ since in this case the functions $(f_n)_{n=1}^\infty$ have pairwise disjoint supports. Similarly, $$\|(f_n\otimes g_n)_{n=1}^\infty\|_2^w\leqslant k_G, \ \ \hbox {for all}  (f_n, g_n)_{n=1}^\infty\in \prod_{n=1}^\infty \Phi_n\times \Gamma_n,$$since $(g_n)_{n=1}^\infty$ have pairwise disjoint supports.   Again, by the triangle inequality, $$\|(v_n)_{n=1}^\infty\|_2^w, \|(w_n)_{n=1}^\infty\|_2^w \leqslant k_G,$$ and \[\|\mathbb{E}_{\xi,M}\mathbf{u}\| \leqslant \|(v_n)_{n=1}^\infty\|_2^w +\|(w_n)_{n=1}^\infty\|_2^w +\sum_{n=1}^\infty\|v_n+w_n-\mathbb{E}_{\xi,M}\mathbf{u}(n)\| \leqslant 2k_G+\ee.\]   

The base step of making this sequence of choices is arbitrary, as in the previous proposition.   We indicate how to complete the recursive step.   Assume $E_1< \ldots< E_n,$ $$E_i\in MAX (S_\xi)\bigcap [L\cap(\max E_{i-1},\infty)]^{<\omega},$$ for $1<i \leqslant n$ and $\alpha_0< \ldots<\alpha_n<\omega^{\omega^\xi}$ have been chosen.  Denote $\Delta_n=E_1\cup \ldots\cup E_n$ and $m_n=\sum_{\Delta_{n-1}\prec F\preceq\Delta_n}p^\xi_Fu_F.$
We now consider the collection $$\mathbf{v}=(v_E)_{ E\in [\nn]^{<\omega}\setminus\{\varnothing\}} \ \ \hbox{in} \ \ B_{C(\alpha_n)\tim_\pi C(\alpha_n)},$$ given by  $v_E=(R_{\alpha_n}\otimes R_{\alpha_n})u_{\Delta_n\cup E}$ if $E_n<E$ and $v_E=0$ otherwise.    By Lemma \ref{claim1}, $$Sz(C(\alpha_n)\tim_\pi C(\alpha_n))\leqslant\omega^\xi.$$ Thus,  as in the previous proposition, we can choose $$E_{n+1}\in MAX S_\xi\cap [L\cap(\max E_{n},\infty)]^{<\omega}$$ such that
$$\Vert\mathbb{E}_{\xi, E_{n+1}}\mathbf{v}\Vert=\Vert(R_{\alpha_n}\otimes R_{\alpha_n})m_{n+1}\Vert
<\ee/3\cdot 2^{n+1}.$$    

We then choose a finite set $F\subset B_{C_0(\omega^{\omega^\xi})}$ and $\alpha_{n+1}\in (\alpha_n, \omega^{\omega^\xi})$ such that $\supp(f)\subset [0, \alpha_{n+1}],$ for all $f\in F$ and \[v'\in \text{co}\{f\otimes g: (f,g)\in F\times F\}\] such that $$\|v'-\mathbb{E}_{\xi, E_{n+1}}\mathbf{v}\|<\ee/3\cdot 2^{n+1}.$$     
 We then let \[F_{n+1}=\Gamma_{n+1}=\{(I-P_{\alpha_n})f: f\in F\},\] \[G_{n+1}=F,\] \[\Phi_{n+1}= \{P_{\alpha_n}f :f\in F\},\] \[v_{n+1}=((I-P_{\alpha_n})\otimes I)v',\] and \[w_{n+1} = (P_{\alpha_n}\otimes (I-P_{\alpha_n}))v'.\] We compute \begin{align*} \|v_{n+1}-w_{n+1} - \mathbb{E}_{\xi,E_{n+1}}\mathbf{v}\| & = \|v'-\mathbb{E}_{\xi,E_{n+1}}\mathbf{v}+{(P_{\alpha_n}\otimes P_{\alpha_n})}v'\| \\ & \leqslant 2\|v'-\mathbb{E}_{\xi,E_{n+1}}\mathbf{v}\|+\|{(P_{\alpha_n}\otimes P_{\alpha_n})} \mathbb{E}_{\xi,E_{n+1}}\mathbf{v}s\| \\ & <\ee/2^{n+1}.  \end{align*}   This completes the proof that $\mathbf{s}_\xi(U)\leqslant 2k_G$.  
\end{proof}

\section{On upper bounds of  $\mathbf{g}_{\xi,1+\zeta}(U)$ where $U=C(\omega^{\omega^\xi})\tim_\pi C(\omega^{\omega^\zeta})$ and  $\zeta \leq \xi< \omega_1$}           \label{positive}

In section we conclude  the proof of Theorem \ref{positive one} by induction  on $\xi< \omega_1$ proving the following proposition.

\begin{proposition} Suppose that $h(\eta)$ holds for all $\eta<\xi$. If $\zeta<\xi$ then  $\mathbf{g}_{\xi,1+\zeta}(U) \leqslant k_G.$ Moreover, If $\zeta=\xi$ then $\mathbf{g}_{\xi,1+\zeta}(U)  \leqslant 2 k_G.$
\end{proposition}

\begin{proof} If $\zeta<\xi$, fix $\kappa>k_G$ and if $\zeta=\xi$, fix $\kappa>2k_G$. Fix $\varepsilon>0$ such that $ k_G+ \ee<\kappa$  if $\zeta<\xi$  or $2k_G+ \ee<\kappa $ if $\zeta=\xi.$ 
Fix  a weakly null collection  $\mathbf{u}=(u_E)_{ E\in[\nn]^{<\omega}\setminus\{\varnothing\}}$ in $B_U$  and $L\in[\nn]$.        Since $\mathbf{s}_\xi(U) <\kappa$,  we can assume, by replacing $L$ with a subset, and using  Remark \ref{perm3} and Proposition \ref{hereditary} that for all $E\in MAX(S_{1+\zeta}[S_\xi])\bigcap [ L]^{<\omega},$ $$\|\mathbb{E}^{(2)}_{\xi,  1+\zeta, E}\mathbf{u}\| \leqslant \kappa.$$ Let $\alpha_0=\beta_0=-1$.    Let  $E_1\in MAX(S_{1+\zeta}[S_\xi])\bigcap [ L]^{<\omega}.$     Since $\|\mathbb{E}^{(2)}_{\xi, 1+\zeta, E_1}\mathbf{u}\|\leqslant \kappa$, we can select finite sets $F_1\subset \kappa B_{C_0(\omega^{\omega^\xi})}$, $G_1\subset B_{C_0(\omega^{\omega^\zeta})}$, $\alpha_1<\omega^{\omega^\xi}$, $\beta_1<\omega^{\omega^\zeta}$, and \[v_1\in \text{co}\{f\otimes g: (f,g)\in F_1\times G_1\}\] such that $\supp(f)\subset (\alpha_0, \alpha_1]$ for all $f\in F_1$, $\supp(g)\subset (\beta_0, \beta_1]$ for all $g\in G_1$, and \[\|v_1-\mathbb{E}^{(2)}_{\xi, 1+\zeta,E_1}\mathbf{u}\|<\ee/2.\]   

Now assume that $E_1<\ldots <E_n$, $$E_i\in  MAX(S_{1+\zeta}[S_\xi])\bigcap [ L\cap(\max E_{i-1},\infty)]^{<\omega},$$ for $1<i\leqslant n,$ $\alpha_0<\ldots <\alpha_n<\omega^{\omega^\xi}$,  $\beta_0<\ldots < \beta_n<\omega^{\omega^\zeta}$, $F_1, \ldots, F_n$, $G_1, \ldots, G_n$, and $v_1, \ldots, v_n$ have been chosen. Denote $\Delta_n=E_1\cup\ldots\cup E_n$ Define $\mathbf{v}=(v_E)_{ E\in[\nn]^{<\omega}\setminus\{\varnothing\}}\subset B_U$ by letting $v_E=u_{ \Delta\cup E}$ if $E_n<E$ and $v_E=0$ otherwise.   Using Lemma \ref{claim2}, we can select $$E_{n+1}\in MAX(S_{1+\zeta}[S_\xi])\cap [(\max E_n, \infty)\cap L]^{<\omega}$$ such that  \[\|\mathbb{E}^{(2)}_{\xi, 1+\zeta, E_{n+1}}\mathbf{v}\|\leqslant \kappa\] and \[\|\mathbb{E}^{(2)}_{\xi, 1+\zeta, E_{n+1}}\mathbf{v}-((I-P_{\alpha_n})\otimes (I-P_{\beta_n}))\mathbb{E}^{(2)}_{\xi, 1+\zeta, E_{n+1}}\mathbf{v}\|<\ee/2^{n+2}.\]    We can choose finite sets $F\subset \kappa B_{C_0(\omega^{\omega^\xi})}$ and $G\subset B_{C_0(\omega^{\omega^\zeta})}$, ordinals $\alpha_{n+1}\in (\alpha_n, \omega^{\omega^\xi})$, $\beta_{n+1}\in (\beta_n, \omega^{\omega^\zeta})$, and \[v'\in \text{co}\{f\otimes g:(f,g)\in F\times G\}\] such that $\supp(f)\subset [0,\alpha_{n+1}]$ for all $f\in F$, $\supp(g)\subset [0, \beta_{n+1}]$ for all $g\in G$, and 
$$\|v'- \mathbb{E}^{(2)}_{\xi, 1+\zeta, E_{n+1}}\mathbf{v}\|<\ee/2^{n+2}.$$
   Define \[F_{n+1}=\{(I-P_{\alpha_n})f:f\in F\},\] \[G_{n+1}=\{(I-P_{\beta_n})g: g\in G\},\] and \[v_{n+1}=((I-P_{\alpha_n})\otimes(I-P_{\beta_n}))v'.\]    Note that $\supp(f)\subset (\alpha_n, \alpha_{n+1}]$ for each $f\in F_{n+1}$, $\supp(g)\subset (\beta_n, \beta_{n+1}]$ for each $g\in G_{n+1}$, and 
\begin{align*} 
\|v_{n+1} - \mathbb{E}^{(2)}_{\xi, 1+\zeta, E_{n+1}}\mathbf{v})\| & \leqslant \|((I-P_{\alpha_n})\otimes(I-P_{\beta_n}))(v'-\mathbb{E}^{(2)}_{\xi, 1+\zeta, E_{n+1}}\mathbf{v})\| \\ & + \|\mathbb{E}^{(2)}_{\xi, 1+\zeta, E_{n+1}}\mathbf{v}- ((I-P_{\alpha_n})\otimes(I-P_{\beta_n}))\mathbb{E}^{(2)}_{\xi, 1+\zeta, E_{n+1}}\mathbf{v} \| \\ & <\ee/2^{n+2}/2+\ee/2^{n+2}=\ee/2^{n+1}
.\end{align*} 
This completes the recursive construction. 

 Let $N=\cup_{n=1}^\infty E_n$.    We note that for each $(f_n, g_n)_{n=1}^\infty\in \prod_{n=1}^\infty F_n\times G_n$, $\|(f_n)_{n=1}^\infty\|_1^w \leqslant \kappa,$ since the functions $f_n\in  \kappa B_{C_0(\omega^{\omega^\xi})}$ have pairwise disjoint supports,  and $\|(g_n)_{n=1}^\infty\|_1^w \leqslant  1$, since  the functions $g_n\in B_{C_0(\omega^{\omega^\zeta})}$ have pairwise disjoint supports.     Therefore by Proposition \ref{haig}, $$\|(f_n\otimes g_n)_{n=1}^\infty\|_1^w \leqslant \kappa,$$ for each $(f_n, g_n)_{n=1}^\infty \in\prod_{n=1}^\infty F_n\times G_n$.     By the triangle inequality, $\|(v_n)_{n=1}^\infty\|_1^w \leqslant \kappa.$    Since  $\mathbb{E}^{(2)}_{\xi, 1+\zeta, N}\mathbf{u}(n+1)=\mathbb{E}^{(2)}_{\xi, 1+\zeta, E_{n+1}}\mathbf{v},$ it follows that  
$$ \|\mathbb{E}^{(2)}_{\xi, 1+\zeta, N}\mathbf{u}\|_w^1 \leqslant \|(v_n)_{n=1}^\infty\|_1^w + \sum_{n=1}^\infty \|v_n-\mathbb{E}^{(2)}_{\xi, 1+\zeta, N}\mathbf{u}(n)\| \leqslant \kappa+ \ee. 
$$    
 Since this holds for any $\varepsilon>0$, any weakly null collection  $\mathbf{u}=(u_E)_{E\in [\nn]^{<\omega}\setminus \{\varnothing\}}$ in $ B_U$, and $L\in[\nn]$, it follows that $\mathbf{g}_{\xi, 1+\zeta}(U) \leqslant \kappa$. Since $\kappa>k_G$ was arbitrary in the $\zeta<\xi$ case, and $\kappa>2k_G$ was arbitrary in the $\zeta=\xi$ case, we are done. 
\end{proof}

\section{An appropriate Cantor scheme  on the spaces $[1, \omega^{\gamma}]$, $\gamma< \omega_1$}

In order to obtain in the next section a lower bound for $\mathbf{g}_{\xi,1+\zeta}(C(\omega^{\omega^\xi})\tim_\pi C(\omega^{\omega^\zeta}))$ with  $\zeta \leq \xi< \omega_1$  we need the existence of a convenient  Cantor scheme on $[1, \omega^\gamma]$. The proof of it is similar to the computation of the Bourgain $\ell_1$-index of the spaces $C(\omega^{\omega^\zeta})$ found in the work of Alspach, Judd, and Odell \cite{AJO}.

\begin{lemma} For any   $1\leqslant\gamma<\omega_1$, there exists a tree $T_\gamma$  on $[0,\omega^\gamma)$ with $\text{\emph{rank}}(T_\gamma)=\omega\gamma$ and a collection $(f_t)_{t\in T_\gamma}$ in $ B_{C_0(\omega^\gamma)}$ such that for each $t\in MAX(T_\gamma)$,  $(f_s)_{s\preceq t}$ is compatibe with a Cantor scheme on $[1, \omega^\gamma]$.    
\label{tree}
\end{lemma}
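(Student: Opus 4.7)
The plan is a transfinite induction on $\gamma \geqslant 1$, paralleling the Alspach--Judd--Odell construction mentioned just before the lemma. I would construct $T_\gamma$ and the functions $(f_t)_{t \in T_\gamma}$ simultaneously.

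For the base case $\gamma = 1$, I would use that $C_0(\omega) \cong c_0$: for each $n \geqslant 1$, allocate a block $B_n$ of $2^n$ consecutive positive integers with $B_1 < B_2 < \cdots$, and build a chain of $n$ nodes in $T_1$ rooted at a label unique to $n$, so that distinct chains share only $\varnothing$. Along this chain, take the $j$-th function ($1 \leqslant j \leqslant n$) to be the $j$-th dyadic Rademacher function on $B_n$: $\pm 1$ on each half of $B_n$ according to the $j$-th bit of the enumeration. These are finitely supported members of $B_{c_0}$, and the $n$ functions are compatible with the depth-$n$ dyadic Cantor scheme on $B_n \subseteq [1,\omega]$. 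The resulting $T_1$ has chains of every finite length but none infinite, yielding rank $\omega$.

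For the successor step $\gamma = \gamma'+1$, I would use $\omega^\gamma = \omega^{\gamma'} \cdot \omega$ to decompose $[0,\omega^\gamma)$ into clopen intervals $J_n = [\omega^{\gamma'} n, \omega^{\gamma'}(n+1))$, each order-isomorphic to $[0,\omega^{\gamma'})$. Transplant, for each $n$, a copy $T_{\gamma'}^{(n)}$ of the inductively constructed $T_{\gamma'}$ into $J_n$, extending the functions by zero outside $\overline{J_n}$ to stay in $B_{C_0(\omega^\gamma)}$. Then assemble $T_\gamma$ by stitching these subtrees at $\varnothing$, prepending to the $n$-th one a chain of $n$ auxiliary nodes bearing $\pm 1$-valued functions built from clopen indicators in $[0,\omega^\gamma)$ (which lie in $C_0(\omega^\gamma)$ by scatteredness). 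These auxiliary functions are chosen so that the Cantor scheme inherited from $T_{\gamma'}^{(n)}$ inside $J_n$ extends to a full Cantor scheme on $[1,\omega^\gamma]$ compatible with the whole branch. A Cantor--Bendixson bookkeeping gives $\text{rank}(T_\gamma) = \sup_n (\omega\gamma' + n) = \omega\gamma$. For limit $\gamma$, apply the same strategy with $\gamma_n \uparrow \gamma$ from the characteristic sequence: place shifted copies of $T_{\gamma_n}$ in disjoint clopen subintervals of $[0,\omega^\gamma)$ of order type $[0,\omega^{\gamma_n})$ (which exist since $\omega^\gamma = \sup_n \omega^{\gamma_n}$) and stitch at $\varnothing$ to obtain $\text{rank}(T_\gamma) = \sup_n \omega\gamma_n = \omega\gamma$.

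The hard part will be the combinatorial bookkeeping: verifying the exact rank after each stitching (a Cantor--Bendixson calculation across the $\omega$-many subtrees, with careful handling of the successor and limit cases of ordinal derivation), and constructing the auxiliary functions in the successor step so that the sub-Cantor-scheme on $J_n$ provided by induction extends to a full Cantor scheme on $[1,\omega^\gamma]$ compatible with the branch. The scattered structure of $[0,\omega^\gamma)$ guarantees the availability of $\pm 1$-valued clopen-indicator functions in $C_0(\omega^\gamma)$, which makes this compatible extension possible.
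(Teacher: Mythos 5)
Your base case and your limit case are essentially the paper's argument (chains of length $n$ carrying dyadic $\pm 1$ block functions, and totally incomparable shifted copies of $T_{\gamma_n}$ stitched at the root, respectively), and both are fine. The genuine gap is in your successor step, and it is not mere bookkeeping: with your design it cannot be repaired by a clever choice of auxiliary functions. You transplant a \emph{single} copy of $T_{\gamma'}$ into one clopen block $J_n$ of order type $\omega^{\gamma'}$, extend its functions by zero outside $J_n$, and then \emph{prepend} a chain of $n$ new $\pm 1$-valued functions. In the combined branch the prepended functions come first, so in any Cantor scheme witnessing compatibility they generate the $2^n$ cells at depth $n$, and each inner function must then be identically $+1$ and $-1$ on nonempty children of \emph{every one} of these $2^n$ cells. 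Since the inner functions vanish off $J_n$, all cells from depth $n+1$ on must lie inside $J_n$, so inside each of the $2^n$ outer cells you need a full depth-$m$ dyadic scheme realized by the transplanted inner branch functions. But the inductive hypothesis hands you exactly one Cantor scheme for that branch inside $J_n$, not $2^n$ schemes sitting inside prescribed disjoint pieces of $J_n$; and you cannot instead collapse the outer cells onto $J_n$'s complement or make the outer functions constant on $J_n$, because then the outer cells missing $J_n$ have no children on which the inner functions are $\pm 1$. So compatibility of the whole branch fails with a once-transplanted, zero-extended inner tree, no matter how the auxiliary clopen-indicator functions are chosen.

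The paper's successor step avoids this by changing which functions get replicated: for the $n$-th subtree it puts the new Rademacher-type functions on the dyadic decomposition of $(0,\omega^{\gamma}2^{n}]$ into blocks of length $\omega^{\gamma}2^{n-k}$ (here $\gamma$ is your $\gamma'$), and it defines each inner-node function by copying the \emph{same} inductively given function $g_s$ onto every interval $(\omega^{\gamma}(i-1),\omega^{\gamma}i]$, $1\leqslant i\leqslant 2^n$, extending by zero beyond $\omega^{\gamma}2^n$. Then the depth-$n$ outer cells are single blocks of length $\omega^{\gamma}$, and on whichever block an outer cell is, the inner functions restrict to an exact translate of the original $T_{\gamma'}$ branch, so the translated inner Cantor scheme completes the combined scheme. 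If you want to keep your one-copy-per-block architecture, you would have to strengthen the inductive hypothesis (e.g.\ to produce schemes inside arbitrarily prescribed clopen sub-blocks), which amounts to the same replication idea. As written, your successor step does not prove the lemma.
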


\begin{proof} We work by induction.      We will also define the functions $(f_t)_{t\in T_{\gamma}}$ to satisfy $f_t(0)=0$. 

For the $\gamma=1$ case, we first define \[T_{0,n}=\{(n-1,n-2, \ldots, n-k): 1\leqslant k\leqslant n\}\] and define $T_{1}=\cup_{n=1}^\infty T_{0,n}$.  Note that $\text{rank}(T_{1})=\omega= \omega \cdot 1$, since $T_{1}=\cup_{n=1}^\infty T_{0,n}$ is a totally incomparable union and the rank of $T_{0,n}$ is $n$.  In order to define $(f_t)_{t\in T_1}$  it suffices to define $(f_s)_{s\in T_{0,n}}$ for each $n\in\nn$. To that end, fix $n\in\nn$.   We define $$f_{(n-1,...,n-k)}=\sum_{i=1}^{2^k}(-1)^{i-1}1_{(2^{n-k}(i-1),2^{n-k}i]}$$
where $1_{(2^{n-k}(i-1),2^{n-k}i]}$ is the indicator function of  $(2^{n-k}(i-1),2^{n-k}i]$ in $[0,\omega].$

The sequence $(f_{(n-1,...,n-k)})_{k=1}^n$ is compatible with the Cantor scheme defined by $A_\varnothing=(0,2^n]$ and for $d=(\varepsilon_1,...,\varepsilon_k)\in\Delta_k$ by
$$A_{(\varepsilon_1,...,\varepsilon_k)}=\bigcap_{l=1}^k
f^{-1}_{(n-1,...,n-l)}(\varepsilon_l).$$

Assume the result holds for some ordinal $\gamma$. Fix a tree $T_\gamma$ on $[0, \omega^\gamma)$ with $\text{rank}(T_\gamma) = \omega\gamma$ and a collection $(g_t)_{t\in T_\gamma}$ as in the conclusion, and also satisfying $g_t(0)=0$ for all $t\in T_\gamma$.   As in the  $\gamma=1$ case,  it suffices to produce pairwise incomparable trees $T_{\gamma,n}$ on $[0, \omega \gamma + n)\subset [0, \omega (\gamma+1))$ with $\text{rank}(T_{\gamma,n})=\omega \gamma +n $ and  a collection of functions $(f_t)_{t\in T_{\gamma,n}}$ in $B_{C_0(\omega^{\gamma+1})}$ satisfying the conclusions, and then let $T_{\gamma+1}=\cup_{n=1}^\infty T_{\gamma,n}$. To that end, fix $n\in\nn$.   Let \[R_n=\{(\omega \gamma+ n-1, \ldots, \omega \gamma + n-k): 1\leqslant k\leqslant n\},\] \[S_n = \{(\omega \gamma + n-1, \ldots, \omega \gamma)\smallfrown t: t\in T_\gamma\},\] and let \[T_{\gamma,n}=R_n\cup S_n.\]  It is clear that $\text{rank}(T_{\gamma,n})=\omega \gamma + n$. 

In order to define the collection $(f_t)_{t\in T_{\gamma,n}}, $ we first define $(f_t)_{t\in R_{\gamma,n}}$ by
$$f_{(\omega \gamma+ n-1, \ldots, \omega \gamma + n-k)}=\sum_{i=1}^{2^k}(-1)^{i-1}1_{(\omega^\gamma2^{n-k}(i-1),\omega^\gamma2^{n-k}i]}$$ 
where, for $k=1,...,n$ and $i=1,...,2^k,$ $1_{(\omega^\gamma2^{n-k}(i-1),\omega^\gamma2^{n-k}i]}$ denotes the indicator function of the interval $
(\omega^\gamma2^{n-k}(i-1),\omega^\gamma2^{n-k}i]$ in $[0,\omega^{\gamma+1}].$ 
 For $t=(\omega\gamma+n-1,\ldots,\omega\gamma)\smallfrown s$ with $s\in T_\gamma,$ we define $f_t$ on each interval $(\omega^\gamma(i-1),\omega^\gamma i]$ for $1\leqslant i\leqslant 2^{n}$ by letting $$f_t(\omega^\gamma(i-1)+\eta)=g_s(\eta), \ \  \eta\in [0,\omega^\gamma],$$ and we define $f_t$ on $(\omega^\gamma2^n, \omega^{\gamma+1}]$
 by letting $$f_t(\eta)=0 \ \ \hbox {for all} \ \ \eta\in(\omega^\gamma2^n, \omega^{\gamma+1}].$$  It is easy to see that each $f_t\in C_0(\omega^{\gamma+1}).$  Let $t=(\omega\gamma+n-1,...,\omega\gamma) \smallfrown s\in MAX(T_{\gamma,n}),$ by construction $(f_r)_{r\preceq t}$ is compatible with a Cantor scheme on $[0,\omega^{\gamma+1}].$

Now assume that $\gamma$ is a limit ordinal and the result holds for all $1\leqslant \zeta<{\gamma}$.  For each $1\leqslant \zeta<\gamma$, let $T_\zeta$ be a tree on $[0, \omega^\zeta)$ with $\text{rank}(T_\zeta)=\omega \zeta$ and let $(g_t)_{t\in T_\zeta}$ be a family in $ B_{C_0(\omega^\zeta)}$ be as in the statement of the lemma.     Let \[U_{\zeta+1} = \{(\omega^\zeta + \nu_i)_{i=1}^n: (\nu_i)_{i=1}^n\in T_{\zeta+1}\}.\] Note that the function $ \eta\in[0,\omega^{\zeta+1})\longrightarrow \omega^\zeta+\eta\in[\omega^\zeta,\omega^{\zeta+1})$ is bijective, therefore the  function  $\nu=(\nu_i)_{i=1}^n\in T_{\zeta+1}\mapsto \phi_\zeta(\nu)=(\omega^\zeta+\nu_i)_{i=1}^n\in U_{\zeta+1}$ is a tree isomorphism, and  $U_{\zeta+1}$ is a tree on $[\omega^\zeta, \omega^{\zeta+1})$ with rank $\omega(\zeta+1)$. This construction guarantees that the trees $(U_{\zeta+1})_{\zeta< \gamma}$ are pairwise incomparable. From this it follows that $T_\gamma=\cup_{\zeta<\gamma}U_{\zeta+1}$ has rank $\omega\gamma$.

 For $t\in U_{\zeta+1}$, define $f_t\in C_0(\omega^{\gamma+1})$ to be the function defined by \[f_t(\eta) = \left\{ \begin{array}{ll} g_{\phi_\zeta^{-1}(t)}(\eta) & : \eta\leqslant \omega^{\zeta+1} \\ 0 & : \omega^{\zeta+1}<\eta \leqslant \omega^\gamma. \end{array}\right.\] 
 
 It is easy to see that $(f_t)_{t\in T_\gamma}$ satisfies the conclusion. Indeed, for any $t\in MAX(T_\gamma)$,  $\phi_\zeta^{-1}(t)\in MAX(T_{\zeta+1})$ for some $\zeta<\gamma$.  
 By construction, there exists a Cantor scheme on $[1, \omega^{\zeta+1}]$ with respect to which $(g_s)_{s\preceq \phi_\zeta^{-1}(t)}=(g_{\phi_\zeta^{-1}(s)})_{s\preceq t}$ is compatible.    Since $f_s|_{[1, \omega^{\zeta+1}]}= g_{\phi_\zeta^{-1}(s)}$ for each $s\preceq t$, it follows that if $D$ is a Cantor scheme on $[1, \omega^{\zeta+1}]$ with respect to which $(g_s)_{s\preceq \phi_\zeta^{-1}(t)}$ is compatible, then $D$ is also a Cantor scheme on $[1, \omega^\gamma]$ with respect to which $(f_s)_{s\preceq t}$ is also compatible. 
\end{proof}

\section{A lower bound of  $\mathbf{g}_{\xi,1+\zeta}(U)$ where $U=C(\omega^{\omega^\xi})\tim_\pi C(\omega^{\omega^\zeta})$ and $\zeta \leq \xi< \omega_1$}\label{sharpness}

The main purpose of this section is to prove $\mathbf{g}_{\xi,1+\zeta}(C_0(\omega^{\omega^\xi})\widehat{\otimes}_\pi C_0(\omega^{\omega^\zeta}))\geqslant1.$ For this we need the following lemma.
\begin{lemma} For any $1\leqslant \xi,\zeta<\omega_1$ and any tree $T$ with $\mathop{\rm rank}(T)=\omega^{1+\zeta},$ there exists a monotone map $
\Phi:\mathcal{S}_{1+\zeta}[\mathcal{S}_\xi]\setminus\{\varnothing\}\to T$ such that for each $E\in MAX(\mathcal{S}_{1+\zeta}[\mathcal{S}_\xi])$ if $E=\bigcup_{n=1}^mE_n$ with $E_1<\cdots<E_m$ and $E_n\in MAX(\mathcal{S}_\xi) $ for each $ 1\leqslant n\leqslant m,$ then, for each $ 1\leqslant i\leqslant m,$   $ \Phi(\bigcup_{n=1}^iE_n)=t_i$ and $\Phi(F)=t_i$ for each
 $\bigcup_{n=1}^{i-1}E_n\prec F\preceq\bigcup_{n=1}^iE_n.$
\label{mon}
\end{lemma}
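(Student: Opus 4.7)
The plan is to factor $\Phi$ as $\Phi = \Psi\circ b$, where $b$ is a ``block-minimum'' map from $\mathcal{S}_{1+\zeta}[\mathcal{S}_\xi]\setminus\{\varnothing\}$ to $\mathcal{S}_{1+\zeta}\setminus\{\varnothing\}$ that collapses each complete maximal $\mathcal{S}_\xi$-block in the canonical decomposition to a single point, while $\Psi$ is a monotone map from $\mathcal{S}_{1+\zeta}\setminus\{\varnothing\}$ into $T$. This separates the block-constancy demanded by the conclusion (handled by $b$) from the rank-matching problem between two trees of essentially the same rank (handled by $\Psi$).

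For the first factor, I define $b(E) = (\min E_1, \ldots, \min E_m)$ where $E = \bigcup_{n=1}^m E_n$ is the canonical decomposition of $\varnothing\neq E\in \mathcal{S}_{1+\zeta}[\mathcal{S}_\xi]$ described earlier in the paper (all $E_n\in \mathcal{S}_\xi$, with $E_n\in MAX(\mathcal{S}_\xi)$ for $n<m$). By the definition of the convolution, $(\min E_n)_{n=1}^m\in \mathcal{S}_{1+\zeta}\setminus\{\varnothing\}$, so $b$ is well-defined. If $F\preceq E$, then by uniqueness of the canonical decomposition (together with heredity), $F$ has canonical decomposition $(E_1,\ldots,E_{i-1},F')$ for some $i\leqslant m$ and some non-empty $F'\preceq E_i$ in $\mathcal{S}_\xi$; since $\min F'=\min E_i$, one gets $b(F)=(\min E_1,\ldots,\min E_i)\preceq b(E)$. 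The same computation shows that $b$ is \emph{constant}, with value $(\min E_1,\ldots,\min E_i)$, on the entire interval $\bigcup_{n=1}^{i-1}E_n\prec F\preceq \bigcup_{n=1}^i E_n$, which is exactly the block-constancy required of $\Phi$.

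To construct $\Psi\colon \mathcal{S}_{1+\zeta}\setminus\{\varnothing\}\to T$, I use the decomposition $\mathcal{S}_{1+\zeta}\setminus\{\varnothing\}=\bigsqcup_{n\in\nn} U_n$, where $U_n=\{F\in \mathcal{S}_{1+\zeta}: (n)\preceq F\}$ is the subtree rooted at $(n)$. By Lemma \ref{schreier}(i), $\mathop{\rm rank}(\mathcal{S}_{1+\zeta})=\omega^{1+\zeta}+1$, which forces $\mathop{\rm rank}(U_n)<\omega^{1+\zeta}$ for every $n$ (with $\sup_n \mathop{\rm rank}(U_n)=\omega^{1+\zeta}$). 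Since $\mathop{\rm rank}(T)=\omega^{1+\zeta}$, for each $n$ I can pick a node $t_n\in T$ at a sufficiently deep Cantor-Bendixson level so that the subtree $T|_{t_n}:=\{s\in T: t_n\preceq s\}$, viewed as a tree rooted at $t_n$, has rank at least $\mathop{\rm rank}(U_n)$. Invoking a general embedding lemma --- that any well-founded tree monotonely embeds, sending root to root, into any well-founded tree of at least equal rank --- I obtain monotone maps $\Psi_n\colon U_n\to T|_{t_n}$ with $\Psi_n((n))=t_n$. Since the $U_n$ are pairwise disjoint, these glue to the desired $\Psi$, and setting $\Phi:=\Psi\circ b$ and $t_i:=\Psi((\min E_1,\ldots,\min E_i))$ produces the conclusion.

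The main obstacle is the general tree-embedding principle used above. It is proved by transfinite induction on the rank of the source tree $U$: send the root of $U$ to the root of $V$, and for each child of the root of $U$ --- whose subtree has some rank $\gamma<\mathop{\rm rank}(U)\leqslant \mathop{\rm rank}(V)$ --- choose a child of the root of $V$ whose subtree has rank $\geqslant \gamma$, then recurse. This selection is possible because $\mathop{\rm rank}(V)>\gamma$ forces at least one child of the root of $V$ to anchor a subtree of rank exceeding $\gamma$ (else $V$ would have rank $\leqslant \gamma+1$). The crucial observation is that the embedding need not be injective --- many children of the root of $U$ may be sent to the same child of the root of $V$ --- which is essential because $\mathcal{S}_{1+\zeta}$ has countably infinite branching at every non-maximal node while $T$ is only assumed to have the stated rank.
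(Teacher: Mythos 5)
Your proposal is correct, but it is organized differently from the paper's argument. The paper does not factor $\Phi$: it fixes the rank functions $h(E)=\max\{\mu: E\in\mathcal{S}_{1+\zeta}^\mu\}$ on $\mathcal{S}_{1+\zeta}\setminus\{\varnothing\}$ and $g(t)=\max\{\mu: t\in T^\mu\}$ on $T$, and defines $\Phi$ directly by induction on the cardinality of $E$, maintaining the invariant $h((\min E_n)_{n=1}^m)\leqslant g(\Phi(E))$; it repeats the previous value while the current $\mathcal{S}_\xi$-block is incomplete, and when a block is completed by adjoining a new point $p$ it uses $h(H\smallfrown(p))<h(H)\leqslant g(\Phi(E))$ to pass to a strict successor in $T$. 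Your factorization $\Phi=\Psi\circ b$ separates the block-constancy (the collapse $b$ onto the minima of the canonical decomposition, which is exactly what the paper's invariant tracks implicitly) from the rank-matching, which you delegate to a general root-to-root monotone embedding lemma applied to the wedges $U_n$ and glued over their pairwise incomparability. The paper's one-pass construction avoids any auxiliary lemma; yours is more modular and isolates a reusable statement (a rooted well-founded tree admits a monotone, not necessarily injective, map into any rooted well-founded tree of at least equal rank, root to root). Both hinge on the same rank comparison, and your $\Psi$, sending children to children, does give the strict chain $t_1\prec\cdots\prec t_m$ that is what Theorem \ref{negative one} actually uses.

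One small repair in your justification of the embedding lemma: the parenthetical ``else $V$ would have rank $\leqslant\gamma+1$'' does not contradict $\mathop{\rm rank}(V)>\gamma$ when $\mathop{\rm rank}(V)=\gamma+1$. The selection nevertheless succeeds: if $c$ is a child of the root of $U$ with node rank $\varrho_U(c)=\delta$, so that $\gamma=\delta+1$ is a successor, then $\varrho_V(\mathrm{root}_V)\geqslant\varrho_U(\mathrm{root}_U)>\delta$ and $\varrho_V(\mathrm{root}_V)=\sup\{\varrho_V(c')+1: c' \text{ a child of } \mathrm{root}_V\}$, so some child $c'$ satisfies $\varrho_V(c')\geqslant\delta$, i.e.\ the subtree at $c'$ has rank $\geqslant\gamma$. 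With that one-line fix your argument is complete.
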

\begin{proof}In the proof, we use the following fact about rank functions on trees. For a well-founded tree $S$ and $s\in S$, there exists an unique ordinal $\varrho(s)$ such that $s\in S^{(\varrho(s))}$ and $s\notin S^{(\varrho(s)+1)}.$ The uniqueness of such an ordinal $\varrho(s)$ is clear. Let us prove the existence of such a $\varrho(s)$. Since $S^{(\text{rank}(S))}=\varnothing$ there exists 
$\lambda=\min\{\mu : s\notin S^{(\mu)}\}.
$ If $\lambda$ is a limit ordinal then, for all $\theta<\lambda,$ $s\in S^{(\theta)}$ and so $s\in \bigcap_{\theta<\lambda}S^{(\theta)}=S^{(\lambda)}$ in contradiction with the definition of $\lambda.$ We define $\varrho(s)$ by $\lambda=\varrho(s)+1,$ of course $\varrho(s)=\max\{\mu: s\in S^{(\mu)}\}.$   We also note that if $s\prec s'$, then $\varrho(s)>\varrho(s')$.   More precisely, \[\varrho(s) = \sup \{\varrho(s')+1:s\prec s'\in S\},\] with the convention that the supremum of the empty set is $0$.

 We note that $\mathop{\rm rank}(\mathcal{S}_{1+\zeta})=\omega^{1+\zeta}+1$ and 
$\mathop{\rm rank}(\mathcal{S}_{1+\zeta}\setminus\{\varnothing\})
=\omega^{1+\zeta}.$
Define $h:\mathcal{S}_{1+\zeta}\setminus\{\varnothing\}\to [0,\omega^{1+\zeta})$  by $$h(E)=\max\{\mu:E\in\mathcal{S}_{1+\zeta}^{ (\mu)} \},$$ and define $g:T\to [0,\omega^{1+\zeta})$ by $$g(t)=\max\{\mu : t \in T^{ (\mu)} \}.$$

We will define $\Phi(E)$ by induction on the cardinality of $E$ to have the property that for $E=\bigcup_{n=1}^mE_n\in \mathcal{S}_{1+\zeta}[\mathcal{S}_\xi]$ such that $E_1<\ldots<E_m$ and $E_n\in MAX(\mathcal{S}_\xi)$ for $1\leqslant n<m,$ $$h((\min E_n)_{n=1}^m)\leqslant g(\Phi(E)).$$ 

The case $\vert E\vert=1.$ For $(n)\in \mathcal{S}_{1+\zeta}[\mathcal{S}_\xi],$ $h((n))<\omega^{1+\zeta},$ so there exists $t\in T$ such that $h((n))\leqslant g(t).$ Define $\Phi((n))=t.$

Fix  $G\in\mathcal{S}_{1+\zeta}[\mathcal{S}_\xi]$ with $|G|>1$ and assume that $\Phi(E)$ has been defined for each $E$ with $|E|<|G|$.  Let $p=\max G$ and let $E=G\setminus \{p\}$.  Then $G=E\smallfrown(p)$ and $E\in \mathcal{S}_{1+\zeta}[\mathcal{S}_\xi]\setminus MAX(\mathcal{S}_{1+\zeta}[\mathcal{S}_\xi]).$ Write $E=\cup_{n=1}^mE_n$ with $E_1<\cdots<E_m,$   $E_n\in MAX(\mathcal{S}_\xi)$ for $1\leqslant n<m$  and $E_m\in \mathcal{S}_\xi.$ If
$E_m\notin MAX(\mathcal{S}_\xi)$ we define $\Phi(G)=\Phi(E).$ If $E_m\in MAX(\mathcal{S}_\xi) ,$  let $t_m=\Phi(E).$ Let $H=(\min E_n)_{n=1}^m$ and note that, by construction, $h(H)\leqslant g(t_m).$ Note that $h(H\smallfrown (p))<h(H)\leqslant g(t_m).$ Therefore there exists $s\in T$ such that $t\prec s$ and $g(s)\geqslant h(H\smallfrown(p)).$ Define $\Phi(G)=s.$ This complete the recursive construction. It is clear that the conclusions are satisfied in this case.
\end{proof}

\begin{theorem} For each $\zeta\leqslant \xi<\omega_1$, $\mathbf{g}_{\xi, 1+\zeta}(C_0(\omega^{\omega^\xi})\tim_\pi C_0(\omega^{\omega^\zeta})) \geqslant 1$. 
\label{negative one}
\end{theorem}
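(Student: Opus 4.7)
My plan is to prove $\mathfrak{g}_{\xi,1+\zeta}(U)\geqslant 1$ by constructing a single weakly null collection $\mathfrak{u}=(u_E)_{\varnothing\neq E\in[\nn]^{<\omega}}\subset B_U$, where $U=C_0(\omega^{\omega^\xi})\tim_\pi C_0(\omega^{\omega^\zeta})$, and showing $\|\mathbb{E}^{(2)}_{\xi,1+\zeta,M}\mathfrak{u}(1)\|_\pi\geqslant 1$ for every $M\in[\nn]$; since $\|\mathbb{E}^{(2)}_{\xi,1+\zeta,M}\mathfrak{u}\|_1^w\geqslant\|\mathbb{E}^{(2)}_{\xi,1+\zeta,M}\mathfrak{u}(1)\|_\pi$, this suffices. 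The collection will consist of elementary tensors $u_E=f_E\otimes h_{\Phi(E)}$, combining an inner Schreier-$\ell_1^+$ tree on the $C_0(\omega^{\omega^\xi})$ factor with an outer Rademacher tree on the $C_0(\omega^{\omega^\zeta})$ factor, and the lower bound will be extracted by a direct application of Proposition \ref{lower}.

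To set up the construction, I apply Lemma \ref{tree} with $\gamma=\omega^\zeta$ (so that $\omega\gamma=\omega^{1+\zeta}$) to obtain a tree $T$ of rank $\omega^{1+\zeta}$ together with a family $(h_t)_{t\in T}\subset B_{C_0(\omega^{\omega^\zeta})}$ that is compatible with a Cantor scheme along each maximal branch. Lemma \ref{mon} then supplies a monotone map $\Phi:\mathcal{S}_{1+\zeta}[\mathcal{S}_\xi]\setminus\{\varnothing\}\to T$ that is constant on each block of the canonical $\mathcal{S}_\xi$-decomposition. Identifying $C_0(\omega^{\omega^\xi})$ with $C_0(\mathcal{S}_\xi)$, for each $E\in\mathcal{S}_{1+\zeta}[\mathcal{S}_\xi]\setminus\{\varnothing\}$ with canonical decomposition $E=E_1\cup\cdots\cup E_m$ (with $E_n\in MAX(\mathcal{S}_\xi)$ for $n<m$ and $E_m\in\mathcal{S}_\xi$), I set $f_E:=1_{V_{E_m}}$, where $V_{E_m}=\{G\in\mathcal{S}_\xi:E_m\preceq G\}$ is a clopen subset of $\mathcal{S}_\xi$; then $f_E\in B_{C_0(\omega^{\omega^\xi})}$ and $u_E:=f_E\otimes h_{\Phi(E)}\in B_U$ (with $u_E:=0$ if $E\notin\mathcal{S}_{1+\zeta}[\mathcal{S}_\xi]$).

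Weak nullity of $\mathfrak{u}$ will follow from the Dunford-Pettis property of $C_0$-spaces combined with the absence of an isomorphic copy of $c_0$ in $\mathcal{M}(\omega^{\omega^\zeta})$. Fix $F$; the inner sequence $(f_{F\smallfrown(k)})_k$ is bounded and converges pointwise to $0$ in $C_0(\omega^{\omega^\xi})$, hence is weakly null. Any $\phi\in U^*$ corresponds to a bounded operator $T_\phi:C_0(\omega^{\omega^\xi})\to\mathcal{M}(\omega^{\omega^\zeta})$; since $\mathcal{M}(\omega^{\omega^\zeta})$ contains no $c_0$-copy, $T_\phi$ is weakly compact, and therefore completely continuous by the Dunford-Pettis property of $C_0(\omega^{\omega^\xi})$. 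Consequently $\|T_\phi f_{F\smallfrown(k)}\|_{\mathcal{M}}\to 0$, and since $\|h_{\Phi(F\smallfrown(k))}\|_\infty\leqslant 1$, we obtain $|\phi(u_{F\smallfrown(k)})|=|\langle T_\phi f_{F\smallfrown(k)},h_{\Phi(F\smallfrown(k))}\rangle|\to 0$.

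For the lower bound, fix $M\in[\nn]$ and write $M|_{\xi,1+\zeta,1}=\bigcup_{r=1}^s F_r^{(1)}$ with $F_r^{(1)}\in MAX(\mathcal{S}_\xi)$. Using the block-constancy of $\Phi$ and Proposition \ref{perm}(iv),
\[
\mathbb{E}^{(2)}_{\xi,1+\zeta,M}\mathfrak{u}(1)=\sum_{r=1}^s a_r\Bigl(\sum_{F\text{ in block }r}b_F^{(r)}f_F\Bigr)\otimes h_{t_r},
\]
where $\sum_r a_r^2=1$, $\sum_F b_F^{(r)}=1$ for each $r$, and $(t_r)_{r=1}^s$ is an ascending chain in $T$. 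I extend $(t_r)$ to a maximal branch of $T$, along which the $h$'s are Cantor-scheme compatible, and take the Dirac functionals $\mu_r:=\delta_{F_r^{(1)}}\in B_{C_0(\omega^{\omega^\xi})^*}$, which are well-defined of norm $1$ because each $F_r^{(1)}$ is an isolated point of $\mathcal{S}_\xi$. The biorthogonality $\mu_r(f_F)=[E_m\preceq F_r^{(1)}]$ equals $1$ when $F$ is in block $r$ and $0$ otherwise, because any non-empty initial segment of $F_{r'}^{(1)}$ (with $r'\neq r$) has minimum $\min F_{r'}^{(1)}\neq\min F_r^{(1)}$ and therefore cannot be an initial segment of $F_r^{(1)}$. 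Proposition \ref{lower} applied with these data yields $\|\mathbb{E}^{(2)}_{\xi,1+\zeta,M}\mathfrak{u}(1)\|_\pi\geqslant 1$. The main obstacle is pinning the constant to exactly $1$: this hinges on the fact that norm-$1$ Dirac masses at the $\mathcal{S}_\xi$-block endpoints are genuinely biorthogonal to the indicators $f_F$, a combinatorial feature that would fail if one resorted to naive differences of Diracs (which have norm $2$).
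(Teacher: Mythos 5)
Your proposal is correct and follows essentially the same route as the paper: index a weakly null collection by $\mathcal{S}_{1+\zeta}[\mathcal{S}_\xi]$, tensor an inner family in $C_0(\mathcal{S}_\xi)$ against the Cantor-scheme/Rademacher tree of Lemma \ref{tree} via the monotone map of Lemma \ref{mon}, and norm the resulting combination using Dirac functionals at the maximal $\mathcal{S}_\xi$-blocks together with Propositions \ref{perm} and \ref{lower}. The only cosmetic differences are your choice of inner functions ($1_{\{G:\,E_m\preceq G\}}$ in place of the paper's $e_{\max E}:G\mapsto 1_G(\max E)$) and your justification of weak nullity via Pe\l czy\'{n}ski's theorem plus the Dunford--Pettis property, where the paper simply invokes the Schur property of $\ell_1(\omega^{\omega^\zeta})$.
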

\begin{proof} Using the notations introduced in section   \ref{seccantor}, by Remark \ref{Cantor} we know that $C_0(\omega^{\omega^\xi})$ is isometrically isomorphic to $C_0(\mathcal{S}_\xi)$. We will use $C_0(\mathcal{S}_\xi)$ in place of $C_0(\omega^{\omega^\xi})$.    

For each integer $n$ we denote $e_n$ the function on $S_\xi$ by letting $e_n(E)=1_E(n)$ where $1_E$ is the indicator function of $E.$ It is obvious that $e_n$ is continuous. Since $e_n(\varnothing)=0$    then $e_n \in C_0(S_\xi).$ Since each $E\in \mathcal{S}_\xi$ is finite, $\lim_n e_n(E)=0$ for all $n\in\nn$  and since  the sequence $(e_n)_{n=1}^\infty$ is bounded and pointwise null, it is weakly null.

By Lemma \ref{tree}, there exist a tree $T$ with $\text{rank}(T)=\omega^{1+\zeta}$ and a collection $(f_t)_{t\in T}$ in $ B_{C_0(\omega^{\omega^\zeta})}$ such that for every $t\in MAX(T)$, $(f_s)_{s\preceq t}$ is compatible with a Cantor scheme on $[0, \omega^{\omega^\zeta}].$    

By Lemma \ref{mon}, there exists a map $\Phi:\mathcal{S}_{1+\zeta}[\mathcal{S}_\xi]\setminus \{\varnothing\}\to T$ such that for each $E\in MAX(\mathcal{S}_{1+\zeta}[\mathcal{S}_\xi])$, if $E=\cup_{n=1}^m E_n$ with $E_1<\ldots <E_m$ and $E_n\in MAX(\mathcal{S}_\xi)$ for each $1\leqslant n\leqslant m$, then there exist $s_1\prec \ldots \prec s_m$, $s_n\in T$, such that for each $1\leqslant i\leqslant m$, $\Phi(F)=s_i$ whenever $\cup_{n=1}^{i-1}E_n\prec F \preceq \cup_{n=1}^i E_n$.

For each $E\in[\nn]^{<\omega}$, define
\begin{equation*}
u_E=\begin{cases}e_{\max E}\otimes f_{\Phi(E)}&\text{if } E\in \mathcal{S}_{1+\zeta}[\mathcal{S}_\xi]\setminus \{\varnothing\}\\
0 &\text{otherwise}\\
\end{cases}
\end{equation*}

\begin{cl} The collection $\mathbf{u}=(u_E)_{ E\in[\nn]^{<\omega}\setminus\{\varnothing\}}$ in $B_{C_0(\mathcal{S}_\xi)\tim_\pi C_0(\omega^{\omega^\zeta})}$ is weakly null and $$\|\mathbb{E}_{\xi, 1+\zeta, M}^{(2)}\mathbf{u}(1)\|\geqslant 1,$$ for all $M\in[\nn]$.   

\end{cl}

It is obvious that the claim gives the theorem. In order to prove the claim we first show that $\mathbf{u}$ is weakly null.     Note that 

$$(C_{0}(\mathcal{S}_\xi)\tim_\pi C_{0}(\omega^{\omega^\zeta}))^*=\mathcal{B}(C_{0}(\mathcal{S}_\xi)\times C_{0}(\omega^{\omega^\zeta}))=\mathcal{L}(C_{0}(\mathcal{S}_\xi),\ell_1(\omega^{\omega^\zeta})).$$

    Fix $b\in\mathcal{B}(C_{0}(\mathcal{S}_\xi)\times C_{0}(\omega^{\omega^\zeta}))$ and $T\in  \mathcal{L}(C_{0}(\mathcal{S}_\xi),\ell_1(\omega^{\omega^\zeta}))$ the correponding operator given by $T(f)(g)= b(f,g)$.
Fix $E\in [\nn]^{<\omega}.$ By standard properties of $\mathcal{S}_{1+\zeta}[\mathcal{S}_\xi]$, if $E\smallfrown (m)\in \mathcal{S}_{1+\zeta}[\mathcal{S}_\xi]$ for some $m>E$, then $E\smallfrown (n)\in \mathcal{S}_{1+\zeta}[\mathcal{S}_\xi]$ for all $n>E$. In this case, for each $E<n$,  $$u_{E\smallfrown (n)}=e_n\otimes f_{\Phi(E\smallfrown (n))} \ \ \hbox{and} \ \ b(u_{E\smallfrown (n)})=T(e_n)(f_{\Phi(E\smallfrown (n))}).$$ The sequence $(T(e_n))_{n=1}^\infty$ is weakly null in the Schur space $\ell_1([0, \omega^{\omega^\zeta}])$, so $\lim_n\| T(e_n)\|=0.$  Since the sequence $(f_{\Phi(E\smallfrown (n))})_{n=1}^\infty$ is bounded, $\lim_n b(u_{E\smallfrown(n)})=0$.   Therefore $(u_{E\smallfrown(n)})_{E<n}$ is weakly null in the case that $E\smallfrown(n)\in \mathcal{S}_{1+\zeta}[\mathcal{S}_\xi]$ for some (equivalently, every) $E<n$.

 If $E\smallfrown (n)\in [\nn]^{<\omega}\setminus \mathcal{S}_{1+\zeta}[\mathcal{S}_\xi]$ for all $n>E$, then the sequence $(u_{E\smallfrown (n)})_{n>E}$ is identically zero. Therefore $\mathbf{u}$ is weakly null.

Next, fix $M\in[\nn]$ and let $E\in MAX(\mathcal{S}_{1+\zeta}[\mathcal{S}_\xi])$ be such that $E\prec M$.  Let $(E_n)_{n=1}^m$ be the $S_\xi$-decomposition of $E.$ It follows from the Remark \ref{4} that, for $1\leqslant i\leqslant m,$ the function $q^{\xi,1+\zeta}$ is constant on $\{F : \cup_{n=1}^{i-1}E_n\prec F\preceq\cup_{n=1}^{i}E_n\}$ and  if we denote $a_i$ this common value, then $a_i$ is positive and $\sum_{i=1}^ma_i^2=1.$

For $1\leqslant i\leqslant m$, let $x^*_i=\delta_{E_i}\in \mathcal{M}(\mathcal{S}_\xi)$. Note that for $\varnothing\prec F\preceq E,$ $\langle x_i^*, e_{\max F}\rangle=1$ if and only if $\cup_{n=1}^{i-1}E_n\prec F\preceq \cup_{n=1}^{i}E_n$,  and $\langle x_i^*, e_{\max F}\rangle=0$ otherwise. Therefore
$$\mathbb{E}^2_{\xi,1+\zeta,M}\mathbf{u}(1)=\sum_{i=1}^m\sum_{\cup_{n=1}^{i-1}E_n\prec F\preceq \cup_{n=1}^{i}E_n}q_F^{\xi,1+\zeta}p^\xi_Fe_{\max F}\otimes f_{\Phi(f)}=
\sum_{i=1}^ma_i\sum_{\cup_{n=1}^{i-1}E_n\prec F\preceq \cup_{n=1}^{i}E_n}p^\xi_Fe_{\max F}\otimes f_{s_i}
$$
For $\cup_{n=1}^{i-1}E_n\prec F\preceq \cup_{n=1}^{i}E_n,$ $p^\xi_F\geq0$ and by Lemma \ref{perm1},
$$\sum_{\cup_{n=1}^{i-1}E_n\prec F\preceq \cup_{n=1}^{i}E_n}p^\xi_F=1.$$ Then, by Proposition \ref{lower}, $\|\mathbb{E}^{(2)}_{\xi, 1+\zeta,M}\mathbf{u}(1)\|\geqslant 1.$ This finishes the claim and the theorem.
\end{proof}

\section{The isomorphism classes  of $C(\omega^{\omega^\xi})\tim_\pi C(\omega^{\omega^\zeta})$, $\xi, \zeta< \omega_1$} \label{last}

We conclude by describing the isomorphism classes of $C(K)\widehat{\otimes}_\pi C(L)$ for countable comapct metric  spaces  $K,L$. We  note that the spaces $C(K)$, $K$ countable compact metric  spaces fit into the classification as $C(K)\widehat{\otimes}_\pi C(\{0\})$.  Of course, the situation in which $K$ and $L$ are both finite and isomorphism is determined by the dimension of $C(K)\widehat{\otimes}_\pi C(L)$, which is $|K||L|$.  Therefore we omit the situation in which both $K$ and $L$ are finite, but include the case in which at most one of the spaces $K,L$ is finite in our classification.

\begin{theorem} Let $K,L,M,N$ be countable compact metric  spaces such that $K,M,N$ are infinite,  $CB(L)\leqslant CB(K)$, and $CB(N)\leqslant CB(M)$.   The following are equivalent. \begin{enumerate}[(i)]\item $C(K)\tim_\pi C(L)$ and $C(M)\tim_\pi C(N)$ are isomorphic to subspaces of quotients of each other. \item $C(K)\tim_\pi C(L)$ and $C(M)\tim_\pi C(N)$ are isomorphic. \item $C(K)$ is isomorphic to $C(M)$ and $C(L)$ is isomorphic to $C(N)$. \end{enumerate}
\label{finish_line_1}
\end{theorem}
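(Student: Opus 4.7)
The implications $(iii)\Rightarrow(ii)\Rightarrow(i)$ are immediate, so the plan is to prove $(i)\Rightarrow(iii)$, and the strategy is to reduce to canonical tensor products of the form $C(\omega^{\omega^\xi})\widehat{\otimes}_\pi C(\omega^{\omega^\zeta})$ and then apply the two-coordinate invariant $Oz$ from Theorem 1.2. First, I would use Bessaga--Pe\l czy\'nski to write $C(K)\cong C(\omega^{\omega^{\xi_1}})$, $C(M)\cong C(\omega^{\omega^{\xi_2}})$, and $C(N)\cong C(\omega^{\omega^{\zeta_2}})$ with $\zeta_2\leqslant\xi_2$. For $L$, set $\zeta_1=-1$ if $L$ is finite (so that $C(K)\widehat{\otimes}_\pi C(L)\cong C(K)\cong C(\omega^{\omega^{\xi_1}})\widehat{\otimes}_\pi\mathbb{K}$ under the convention $C(\omega^{\omega^{-1}})=\mathbb{K}$ from the end of Section 2), and otherwise $C(L)\cong C(\omega^{\omega^{\zeta_1}})$ with $0\leqslant\zeta_1\leqslant\xi_1$. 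Write $X_i:=C(\omega^{\omega^{\xi_i}})\widehat{\otimes}_\pi C(\omega^{\omega^{\zeta_i}})$.

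The heart of the matter is to show that $Oz$ is monotone under the relation in $(i)$. Each $X_i$ has countable Szlenk index ($\omega^{\xi_i+1}$), so it is Asplund and in particular contains no isomorph of $\ell_1$. Lemma \ref{ideal}$(iii)$ then tells us that quotients of $X_i$ inherit every property $\mathfrak{G}_{\alpha,\beta}$ from $X_i$, and the subspace permanence noted in Remark \ref{renorm} (which requires no $\ell_1$-free hypothesis) passes $\mathfrak{G}_{\alpha,\beta}$ from a space to its subspaces. Hence if $Y$ is isomorphic to a subspace of a quotient of $X_i$, then $\{(\alpha,\beta):Y\in\mathfrak{G}_{\alpha,\beta}\}\supset\{(\alpha,\beta):X_i\in\mathfrak{G}_{\alpha,\beta}\}$, so $Oz(Y)\leqslant_{\text{lex}} Oz(X_i)$.

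To conclude, Theorem 1.2 (extended to the $\zeta_i=-1$ case via the final paragraph of Section 2, the underlying computations being Proposition \ref{knight} for finiteness of $\mathfrak{g}_{\xi,0}(C_0(\omega^{\omega^\xi}))$ and Lemma \ref{szlenk} to preclude any strictly smaller value) yields $Oz(X_i)=(\xi_i,1+\zeta_i)$. Hypothesis $(i)$ supplies embeddings in both directions, so $Oz(X_1)=Oz(X_2)$, forcing $(\xi_1,1+\zeta_1)=(\xi_2,1+\zeta_2)$. Since $N$ is infinite, $\zeta_2\geqslant 0$, hence $\zeta_1\geqslant 0$ as well; therefore $L$ is also infinite and $\zeta_1=\zeta_2$, and Bessaga--Pe\l czy\'nski then yields $C(K)\cong C(M)$ and $C(L)\cong C(N)$, which is $(iii)$. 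All nontrivial content lives in Theorem 1.2 and Lemma \ref{ideal}; the only subtle point worth flagging is the bookkeeping needed to slot the finite-$L$ case cleanly into the $Oz$ framework via the $\zeta=-1$ convention, without which the biconditional would have to be stated separately for finite and infinite $L$.
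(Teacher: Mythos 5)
Your proof is correct and follows essentially the same route as the paper: both rest on the permanence of the properties $\mathfrak{G}_{\alpha,\beta}$ under subspaces (Remark \ref{renorm}) and under quotients via Lemma \ref{ideal}$(iii)$ (using that these tensor products are Asplund, hence contain no $\ell_1$), combined with the computation $Oz(C(\omega^{\omega^\xi})\tim_\pi C(\omega^{\omega^\zeta}))=(\xi,1+\zeta)$. The only difference is organizational: you invoke the packaged invariant $Oz$ (Theorem 1.2, extended to $\zeta=-1$), whereas the paper unpacks it, using the Szlenk index for the first coordinate and direct comparisons of the $\mathfrak{g}_{\xi,\beta}$ values (via Lemma \ref{die} and Theorem \ref{negative one}) for the second, with the finite-$L$ case handled by a short contradiction argument.
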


\begin{proof} Of course, $(iii)\Rightarrow(ii)\Rightarrow(i)$. Assume $(i)$ holds.  Let $\xi<\omega_1$ be such that $C(K)$ is isomorphic to $C(\omega^{\omega^\xi})$ and let $\mu<\omega_1$ be such that $C(M)$ is isomorphic to $C(\omega^{\omega^\mu}).$ Note that $Sz(C(K)\tim_\pi C(L))=\omega^{\xi+1}$ and $Sz(C(M)\tim_\pi C(N))=\omega^{\mu+1}$, since $CB(L)\leqslant CB(K)$ and $CB(N)\leqslant CB(M)$.   By properties of the Szlenk index, since $C(K)\tim_\pi C(L)$ and $C(M)\tim_\pi C(N)$ are isomorphic to subspaces of each other, $\omega^{\xi+1}\leqslant \omega^{\mu+1}\leqslant \omega^{\xi+1}$, and $\xi=\mu$.  Therefore $C(K)$ and $C(M)$ are isomorphic to $C(\omega^{\omega^\xi})=C(\omega^{\omega^\mu})$, and are therefore also isomorphic to each other.

We first show that $L$ must be infinite. We work by contradiction. Assume that $L$ is finite, in which case $C(K)\tim_\pi C(L)$ is isomorphic to $C(K).$ It follows from the proof of  item (iii) of Proposition \ref{ideal} 
 that,  for any countable $\alpha, \beta$, $\mathbf{g}_{\alpha, \beta}(C(K)\tim_\pi C(L))=0$ if and only if $\mathbf{g}_{\alpha, \beta}(C(M)\tim_\pi C(N))=0$, and $\mathbf{g}_{\alpha, \beta}(C(K)\tim_\pi C(L))<\infty$ if and only if $\mathbf{g}_{\alpha,\beta}(C(M)\tim_\pi C(N))<\infty$.  By  Lemma \ref{knight} we have $\mathbf{g}_{\xi,0}(C(K)\tim_\pi C(L))<\infty$.    Therefore $\mathbf{g}_{\xi,0}(C(M)\tim_\pi C(N))<\infty$, which implies, by Proposition\ref{die} that $\mathbf{g}_{\xi,\beta}(C(M)\tim_\pi C(N))=0$ for all $\beta>0$.   Since  $N$ is infinite, $C(N)$ is isomorphic to $C(\omega^{\omega^\nu})$ for some $\nu<\omega_1$. Moreover, note that by our assumptions about the Cantor-Bendixson  indices it follows that  $\nu \leq \xi$. Thus,  by Theorem \ref{negative one} we see that  $\mathbf{g}_{\xi,1+\nu}(C(M)\tim_\pi C(N)) \geq 1$. But this is a contradiction, since $1+\nu>0$.  Therefore $L$ must be infinite. 

Let $\zeta<\omega_1$ be such that $C(L)$ is isomorphic to $C(\omega^{\omega^\zeta})$ and let $\nu<\omega_1$ be such that $C(N)$ is isomorphic to $C(\omega^{\omega^\nu})$. Once again by our assumptions about the Cantor-Bendixson  indices we have $\zeta \leq \xi$.     Then according to Theorem \ref{positive one} we deduce that  $\mathbf{g}_{\xi,1+\zeta}(C(K)\tim_\pi C(L))<\infty$, and therefore $\mathbf{g}_{\xi,1+\zeta}(C(M)\tim_\pi C(N))<\infty$. This  implies that $\mathbf{g}_{\xi,\beta}(C(M)\tim_\pi C(N))=0$ for all $\beta>1+\zeta$.  But we also know by  Theorem \ref{negative one}   that $\mathbf{g}_{\xi,1+\nu}(C(M)\tim_\pi C(N)) \geq 1$, so $1+\nu \leqslant 1+\zeta$, and $\nu\leqslant \zeta$. By symmetry, $\zeta\leqslant \nu$, and $C(L)$ is isomorphic to $C(N)$. So  $(iii)$ holds and we are done.

\end{proof}

\begin{theorem} Let $K,L$ be countable compact  metric spaces such that $K$ is infinite and $CB(L)\leqslant CB(K)$.     The following are equivalent. \begin{enumerate}[(i)]\item There exists a compact, Hausdorff space $M$ such that $C(K)\tim_\pi C(L)$ and $C(M)$  are isomorphic to subspaces of quotients of each other. \item $L$ is finite. \end{enumerate}
\end{theorem}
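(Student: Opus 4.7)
The direction $(ii) \Rightarrow (i)$ is immediate: if $L$ is finite then $C(L)$ is finite-dimensional, so $C(K)\widehat{\otimes}_\pi C(L) \simeq C(K)$, and one may take $M = K$.

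For $(i) \Rightarrow (ii)$ I argue by contradiction, assuming $L$ is infinite. Put $U = C(K)\widehat{\otimes}_\pi C(L)$, and use Bessaga--Pe\l czy\'nski to write $C(K) \simeq C(\omega^{\omega^\xi})$ and $C(L) \simeq C(\omega^{\omega^\zeta})$ with $\zeta \leqslant \xi < \omega_1$. The first goal is to show $\mathfrak{g}_{\xi,0}(U) = \infty$. Using the decomposition $C(\omega^{\omega^\alpha}) \simeq C_0(\omega^{\omega^\alpha}) \oplus \mathbb{K}$ together with subadditivity and subspace monotonicity of the invariants from Lemma \ref{ideal}, Theorems \ref{positive one} and \ref{negative one} imply $\mathfrak{s}_\xi(U) < \infty$ and $\mathfrak{g}_{\xi, 1+\zeta}(U) \in (0,\infty)$. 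If $\mathfrak{g}_{\xi,0}(U)$ were also finite, Lemma \ref{die} applied with $\eta = 1 + \zeta > 0$ would force $\mathfrak{g}_{\xi, 1+\zeta}(U) = 0$, a contradiction.

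Next I use (i) to pin down $M$. Since $C(M)$ is isomorphic to a subspace of a quotient of $U$ and $U$ is separable, $C(M)$ is separable, hence $M$ is compact metrizable. Since the Szlenk index is an isomorphic invariant that passes to subspaces and quotients, the two halves of (i) combine to give $Sz(C(M)) = Sz(U) = \omega^{\xi+1}$, so $C(M)$ is Asplund and $M$ is scattered. A scattered compact metrizable space is countable (its Cantor--Bendixson kernel is empty and the successive level sets are at most countable), so $M$ is an infinite, countable, compact Hausdorff space; Bessaga--Pe\l czy\'nski then gives $C(M) \simeq C(\omega^{\omega^\mu})$ for some $\mu$, and matching Szlenk indices forces $\mu = \xi$.

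To close, observe that Proposition \ref{knight} applied with trivial second factor yields $\mathfrak{g}_{\xi,0}(C_0(\omega^{\omega^\xi})) < \infty$, and the $\oplus \mathbb{K}$ decomposition then gives $\mathfrak{g}_{\xi, 0}(C(M)) < \infty$. Since $C(M)$ is Asplund and therefore contains no isomorph of $\ell_1$, Lemma \ref{ideal} transfers this finiteness along the subspace-of-quotient relation: applying the ideal estimate to the quotient map $Q\colon C(M) \twoheadrightarrow Y$ with $Y$ containing $U$ as a subspace, and invoking part (iii) gives $\mathfrak{g}_{\xi,0}(Y) < \infty$, after which part (iv) applied to the inclusion $U \hookrightarrow Y$ gives $\mathfrak{g}_{\xi,0}(U) < \infty$. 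This contradicts $\mathfrak{g}_{\xi,0}(U) = \infty$, so $L$ must be finite. The main obstacle is threading this transfer of $\mathfrak{g}_{\xi,0}$-finiteness across both arrows of the subspace-of-quotient relation cleanly; the rest is routine bookkeeping with the invariants developed in Sections 4 and 5.
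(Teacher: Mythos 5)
Your proposal is correct and takes essentially the same route as the paper: identify $C(M)\simeq C(\omega^{\omega^\xi})$ (Szlenk index plus Bessaga--Pe\l czy\'{n}ski), and then derive a contradiction by transferring a $\mathfrak{g}_{\xi,\cdot}$ bound from $C(M)$ to $C(K)\tim_\pi C(L)$ along the subspace-of-quotient relation via Lemmas \ref{die} and \ref{ideal}, with Theorem \ref{negative one} supplying the lower bound. The only differences are cosmetic: you place the contradiction at the level $\mathfrak{g}_{\xi,0}$ (finite versus infinite) where the paper uses $\mathfrak{g}_{\xi,1}$ (zero versus positive), and you spell out the countability and metrizability of $M$, which the paper leaves implicit in its appeal to the previous theorem.
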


\begin{proof} We observe that $M$ is necessarily infinite and as  in the proof of Theorem \ref{finish_line_1}, we deduce that $C(K)$ is isomorphic to $C(M)$, and each is isomorphic to $C(\omega^{\omega^\xi})$ for some countable ordinal $\xi$.    If $L$ were infinite, then $C(\omega)$ is isomorphic to a subspace of $C(L)$ and so $\mathbf{g}_{\xi,1}(C(K)\tim_\pi C(L))>0$.  It follows from Lemma \ref{knight}  that $\mathbf{g}_{\xi,0}(C(M))<\infty$,  and so $\mathbf{g}_{\xi,1}(C(M))=0$.  Since $C(K)\tim_\pi C(L)$ and $C(M)$ are isomorphic to subspaces of quotients of each other, $\mathbf{g}_{\xi,1}(C(K)\tim_\pi C(L))=0$, a contradiction. 
\end{proof}

The following corollary includes the  isomorphic classification of $C(\omega^{\omega^\xi})\tim_\pi C(\omega^{\omega^\zeta})$, $\zeta\leqslant \xi<\omega_1$ mentioned in Theorem \ref{mm}. 

\begin{corollary} \label{coro} Let $C(\omega^{\omega^{-1}})=C(\{0\})$.   Then, for each countable compact metric  spaces $K,L$ such that $K\cup L$ is infinite, $C(K)\tim_\pi C(L)$ is isomorphic to exactly one of the spaces $C(\omega^{\omega^\xi})\tim_\pi C(\omega^{\omega^\zeta})$, $\xi<\omega_1$, $-1\leqslant \zeta\leqslant \xi$. 
\end{corollary}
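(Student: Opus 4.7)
The plan is to split the corollary into existence and uniqueness, both of which reduce cleanly to the results already proved in this section plus the classical theorem of Bessaga and Pe\l czy\'nski.

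For existence, first normalize the pair $(K,L)$ so that $CB(L)\leqslant CB(K)$ (this is harmless by symmetry of the projective tensor norm). There are two subcases. If both $K$ and $L$ are infinite, then by Bessaga--Pe\l czy\'nski there exist countable $\xi,\zeta<\omega_1$ with $C(K)\cong C(\omega^{\omega^\xi})$ and $C(L)\cong C(\omega^{\omega^\zeta})$; the tensorial isomorphism $C(K)\tim_\pi C(L)\cong C(\omega^{\omega^\xi})\tim_\pi C(\omega^{\omega^\zeta})$ is immediate, and the ordering $CB(L)\leqslant CB(K)$ forces $\zeta\leqslant \xi$. If $L$ is finite (and $K$ is therefore infinite, since $K\cup L$ is infinite), then $C(L)$ is finite dimensional, so $C(K)\tim_\pi C(L)\cong C(K)\cong C(\omega^{\omega^\xi})\cong C(\omega^{\omega^\xi})\tim_\pi C(\{0\})$; invoking the convention $C(\omega^{\omega^{-1}})=C(\{0\})$, we land in the parameter range with $\zeta=-1\leqslant \xi$.

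For uniqueness, suppose $C(\omega^{\omega^\xi})\tim_\pi C(\omega^{\omega^\zeta})\cong C(\omega^{\omega^{\xi'}})\tim_\pi C(\omega^{\omega^{\zeta'}})$ with $-1\leqslant \zeta\leqslant \xi$ and $-1\leqslant \zeta'\leqslant \xi'$. I split on whether either index is $-1$. If $\zeta,\zeta'\geqslant 0$, then Theorem \ref{finish_line_1} applied with $K=\omega^{\omega^\xi}$, $L=\omega^{\omega^\zeta}$, $M=\omega^{\omega^{\xi'}}$, $N=\omega^{\omega^{\zeta'}}$ yields $C(\omega^{\omega^\xi})\cong C(\omega^{\omega^{\xi'}})$ and $C(\omega^{\omega^\zeta})\cong C(\omega^{\omega^{\zeta'}})$, so $\xi=\xi'$ and $\zeta=\zeta'$ by the classical Bessaga--Pe\l czy\'nski classification (via the Szlenk invariant $Sz(C(\omega^{\omega^\xi}))=\omega^{\xi+1}$). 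If exactly one of $\zeta,\zeta'$ equals $-1$, say $\zeta=-1$ and $\zeta'\geqslant 0$, then the left side is $C(\omega^{\omega^\xi})$ while the right side is $C(M)\tim_\pi C(N)$ with both $M,N$ infinite; this is excluded by the theorem preceding the corollary, which shows $C(K)\tim_\pi C(L)\not\cong C(M)$ for any compact Hausdorff $M$ whenever $L$ is infinite. Finally, if $\zeta=\zeta'=-1$, we are comparing $C(\omega^{\omega^\xi})$ and $C(\omega^{\omega^{\xi'}})$, and again the Szlenk index forces $\xi=\xi'$.

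The only delicate point is the mixed case $\zeta=-1<\zeta'$, and I expect that to be the main obstacle only in the sense of book-keeping: one must verify that the preceding theorem truly applies here, which it does because the role of $M$ is played by $\omega^{\omega^\xi}$ (a compact, Hausdorff space) while the tensor product $C(\omega^{\omega^{\xi'}})\tim_\pi C(\omega^{\omega^{\zeta'}})$ has $\zeta'\geqslant 0$, i.e.\ the second factor is genuinely infinite-dimensional. Everything else is bookkeeping on top of Theorem \ref{finish_line_1} and the two surrounding consequences, together with the classical single-space classification. Having verified both existence and uniqueness, the corollary follows.
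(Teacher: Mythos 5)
Your proof is correct and follows the intended route: the paper leaves this corollary's proof implicit, deriving existence from Bessaga--Pe\l czy\'nski (plus $C(K)\tim_\pi C(L)\cong C(K)$ when $L$ is finite) and uniqueness from Theorem \ref{finish_line_1}, the theorem/corollary excluding isomorphism with a $C(M)$ space, and the Szlenk-index classification of the single spaces, exactly as you do. Your case analysis, including the mixed case $\zeta=-1<\zeta'$, is handled correctly.
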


\end{document}